\documentclass[a4paper,12pt]{amsart}
\renewcommand{\o}{\vee}

\newcommand{\y}{\wedge}

\newcommand{\limp}{\rightarrow}

\typeout{******* CONGR 0.1beta ***********}
\typeout{    Macros sobre Xy-pic para}
\typeout{    dibujar particiones y}
\typeout{    congruencias en reticulados.}
\typeout{    by pedro...}
\usepackage[matrix,arrow,frame]{xy}
\newcommand{\alcentro}[1]{\ar @{-}[]+0;[#1]+0}
\newcommand{\bordes}[2][]{\ar @{--}@<#1 2ex>[]+0;[#2]+0}

\newcommand{\extremos}[2]{{\xy*+{#1}*\cir<2ex>{#2}\endxy}}
\newcommand{\singul}[1]{*++[o][F-]{#1}}
\newcommand{\sing}{*++[o][F-]{\bullet}}

\newcommand{\extrNEpi}[1][\bullet]{\extremos{#1}{ul^dr}}
\newcommand{\extrSWpi}[1][\bullet]{\extremos{#1}{dr^ul}}


\newcommand{\bordesup}{\bordes{r}}
\newcommand{\bordeinf}{\bordes[-]{r}}

\typeout{******* CONGR 0.1beta ***********}

\usepackage{amsmath}

\newcommand{\romb}{\diamond}

\newcommand{\0}{\Delta}

\DeclareMathOperator{\CON}{\mathrm{Con}}

\newcommand{\variedad}[1]{\mathcal{#1}}

\newcommand{\V}{\variedad{V}}

\newcommand{\id}{\approx}

\renewcommand{\id}{\approx}
\newcommand{\func}{\rightarrow}

\usepackage{amsthm}
\usepackage{verbatim}
\newlength{\ancho}
\setlength{\ancho}{\paperwidth}
\addtolength{\ancho}{-2in}
\textwidth=\ancho
\evensidemargin=0mm
\oddsidemargin=0mm
\newcommand{\ATTACK}{$\boldsymbol{\forall}$}
\newcommand{\DEFENSE}{$\boldsymbol{\exists}$}
\newcommand{\cero}{\mathbf{0}}
\newcommand{\uno}{\mathbf{1}}
\newcommand{\masd}{+}

\newcommand{\maso}{+}
\newcommand{\poro}{*}
\newcommand{\pord}{*}

\newcommand{\qua}{'}
\newcommand{\til}{\tilde}
\newcommand{\impl}{\rightarrow}
\newcommand{\<}{(}
\renewcommand{\>}{)}
\newcommand{\ent}{\Rightarrow}
\newcommand{\tne}{\Leftarrow}

\renewcommand{\phi}{\varphi}
\newcommand{\phis}{{\varphi^*}}
\renewcommand{\th}{\theta}
\newcommand{\ths}{{\theta^*}}
\newcommand{\Cg}{\mathrm{Cg}}
\renewcommand{\romb}{\times}
\newcommand{\termmmth}{\sigma(\vec{X})}
\newcommand{\termmmths}{\sigma^*(\vec{X})}
\newcommand{\termmmphi}{\rho(\vec{X})}
\newcommand{\termmmphis}{\rho^*(\vec{X})}
\newcommand{\termuno}{\vec X}

\newcommand{\largo}[1]{|#1|}
\newtheorem{theorem}{Theorem}
\newtheorem{lemma}[theorem]{Lemma}
\newtheorem{prop}[theorem]{Proposition}
\newtheorem{corollary}[theorem]{Corollary}

\newtheorem{claim}{Claim}
\CompileMatrices

\title{Varieties with  Definable Factor Congruences}
\author{Pedro S\'{a}nchez Terraf
\quad  Diego J. Vaggione }
\thanks{Supported by CONICET and
    SECYT-UNC. 
\emph{2000 Mathematics Subject Classification:} Primary
  08B05, Secondary 03C40.}

\begin{document}
\begin{abstract}
We study direct product representations of  algebras in
varieties. We collect several
conditions expressing that these representations are \emph{definable}
in a first-order-logic sense, among them the concept of  Definable
Factor Congruences (DFC). The main results are that DFC  is a Mal'cev
property and that it  is equivalent to all other conditions
formulated; in particular we prove that $\V$ has DFC if  and only if 
$\V$ has $\vec{0}$ \& $\vec{1}$ and \emph{Boolean Factor Congruences}.  We also  obtain an explicit
 first order definition $\Phi$ of the kernel of the canonical projections via the terms 
associated to the Mal'cev condition for DFC, in such a manner it is preserved
by taking direct products and direct factors. The main tool is the use
of \emph{central elements,} which are a generalization of
both central idempotent elements in rings with identity and neutral
complemented elements in a bounded lattice.
\end{abstract}
\maketitle
\section{Introduction}
An \emph{algebra} is a nonempty set together with an arbitrary but
fixed collection of finitary operations. A \emph{variety} is an
equationally-definable class of algebras over the same language. 
A \emph{congruence} of an algebra $A$ is the kernel $\{\<a,b\>\in A : f(a) = f(b)\}$
 of a homomorphism $f$ with domain $A$; it is a \emph{factor congruence} of $A$ if
 $f$ is a projection onto a direct factor of $A$. Thus, a direct
 product representation is determined by the pair of
 \emph{complementary} factor congruences given by the canonical
 projections.

In this universal-algebraic setting, one key concept 
for the deeper study of direct product representations is that of \emph{central
  element}. This tool can be developed fruitfully in varieties
  with $\vec{0}$ \& $\vec{1}$, which we now define.

A \emph{variety with $\vec{0}$ \& $\vec{1}$} is a variety $\V$ in which
there exist unary terms $0_{1}(w),\dots,0_{l}(w),$ $1_{1}(w),\dots,1_{l}(w)$ such that
\begin{equation*}
\mathcal{V}\models \vec 0(w)=\vec 1(w)\rightarrow x=y,
\end{equation*}
where $w$, $x$ and $y$ are distinct variables, $\vec{0}=(0_{1},\dots,0_{l})$ and\ $\vec{1}=(1_{1},\dots,1_{l}).$

The  terms $\vec{0}$ and  $\vec{1}$  are  analog, in a rather general
manner, to identity
(top) and null (bottom) elements in rings (lattices), and its
existence in a variety is 
equivalent to the fact that no non-trivial algebra in the variety has a
trivial subalgebra. Throughout this paper we will assume that $\V$ is
a variety with $\vec{0}$ \& $\vec{1}$ such that the terms $\vec{0}$
and  $\vec{1}$ are closed. Of course, this can be
achieved when the language has a constant symbol and we will make this
assumption in order to  simplify and clarify our treatment. The
proofs remain valid in the general case. 

If $\vec a \in A^l$ and $\vec b \in B^l$, we will write $[\vec a, \vec b]$ in place
of $ ((a_{1},b_{1}),\dots,(a_{l},b_{l})) \in (A \times B)^l$.
If $A\in \mathcal{V}$, we say that $\vec{e}\in A^{l}$ is a \emph{central element} of $A$
if there exists an isomorphism $A\rightarrow A_{1}\times A_{2}$ such
that 
\[\vec{e}\mapsto [ \vec{0},\vec{1}].\]
Two central elements $\vec{e},\vec{f}$ will be called 
\emph{complementary} if there exists an isomorphism $ A\rightarrow A_{1}\times A_{2}$
such that $\vec{e}\mapsto [\vec{0},\vec{1}]$ and $\vec{f}%
\mapsto [\vec{1},\vec{0}].$ 

Central elements are a generalization of
both central idempotent elements in rings with identity and neutral
complemented elements in a bounded lattice. It is well known that in
the classical cases,  central elements  are powerful tools since they
translate the concepts of factor congruence and of direct product
representation into  first-order logic. In the general framework of an arbitrary variety $\V$ with $\vec{0}$ \&
$\vec{1}$, we may write this definability property in the following fashion:
\begin{quote}
 there exists a first order formula $\Phi(x,y,\vec z,\vec w)$ in the
  language of $\V$ such that for all $A, B\in \V$, and $a,c\in A$,
  $b,d\in B$,
  \[A\times B \models \Phi\bigl(\<a,b\>, \<c,d\>, [\vec 0, \vec 1],
  [\vec 1, \vec 0] \bigr) 
  \quad \text{ if and only if } \quad a=c.\]
\end{quote}

It was shown in~\cite{va4}  that in varieties  with
$\vec{0}$ \& $\vec{1}$ that have  the
Fraser-Horn-Hu property,  central elements have this
property, and the formula $\Phi$ may be chosen of the form $\exists
\bigwedge p=q$. Moreover, when the variety is congruence modular,
$\Phi$ can be chosen to be a conjunction of equations. One last family
of  varieties with $\vec{0}$ \& $\vec{1}$ (which generalize the above
mentioned examples) is that of varieties in which factor congruences
are \emph{compact}~\cite{9}.

Which is the most
general context in which central elements 
concentrate the information concerning the direct product
representations? One  answer is given by the following condition:
each pair of complementary central elements determines uniquely a pair of
complementary factor congruences associated to them. This property may
be written as:
\begin{quote}
 for $A\in \mathcal{V}$, the map
  \begin{quote}
    $(\theta ,\ths)\mapsto $ unique $(\vec{e},\vec{f})\in
    A^{l}\times A^{l}$ satisfying $\vec{0}\,\theta\, \vec{e}\,\ths\,\vec{1}$
    and $\vec{1}\,\theta\, \vec{f}\,\ths\,\vec{0}$
  \end{quote}
  is a bijection between the set of pairs of complementary factor congruences
  of $A$ and the set of pairs of complementary central elements of $A.$
\end{quote}



Another important concept involved in this paper is that of an algebra
with \emph{Boolean Factor Congruences (BFC)}, that is, an algebra in
which the set of factor congruences is a distributive sublattice of
its congruence lattice. Though it is not apparent, this concept is
intimately connected to the direct product construction. In
the classical work of Chang, J\'onsson and Tarski~\cite{ChaJonTar}
it is proved that BFC is equivalent to the \emph{strict refinement
 property} (a strengthening of the property that states that every two
direct product representations have a common refinement). It is also
noteworthy that in several works on sheaf representations
\cite{Burris, Comer, Davey, Krauss} BFC has
played an important role. For example, in Bigelow and
Burris~\cite{1} it is shown that in a variety with BFC the Boolean
product representations with directly indecomposable factors are
unique and coincide with the Pierce sheaf \cite{Pierce}. We refer the reader to the
work of Willard~\cite{7} as a key reference on BFC.

In the present work, we will prove that all the conditions mentioned
in this introduction are
indeed equivalent for the class of varieties with $\vec 0$ \& $\vec 1$:
\begin{theorem}\label{th:principal}
Let $\V$ be a variety with $\vec 0$ \& $\vec 1$. The following are
equivalent:
\begin{enumerate}
\item $\V$ has the \emph{Weak Determining Property}: for $A\in \mathcal{V}$, the map
  \begin{quote}
    $(\theta ,\ths)\mapsto $ unique $(\vec{e},\vec{f})\in
    A^{l}\times A^{l}$ satisfying $\vec{0}\,\theta\, \vec{e}\,\ths\,\vec{1}$
    and $\vec{1}\,\theta\, \vec{f}\,\ths\,\vec{0}$
  \end{quote}
  is a bijection between the set of pairs of complementary factor congruences
  of $A$ and the set of pairs of complementary central elements of $A.$
  
\item $\V$ has the \emph{Determining Property}: For $A\in
  \mathcal{V}$, the map 
  \begin{quote}
    $(\theta ,\ths)\mapsto $ unique $\vec{e}\in
    A^{l}$ satisfying $\vec{0}\,\theta\, \vec{e}\,\ths\,\vec{1}$
  \end{quote}
  is a bijection between the set of pairs of complementary factor congruences
  of $A$ and the set of  central elements of $A.$
\item $\V$ has \emph{Definable Factor Congruences (DFC)}: There exists a first order formula $\Phi(x,y,\vec z)$ in the
  language of $\V$ such that for all $A, B\in \V$, and $a,c\in A$,
  $b,d\in B$,
  \[A\times B \models \Phi\bigl(\<a,b\>, \<c,d\>, [\vec 0, \vec 1]\bigr)
  \quad \text{ if and only if } \quad a=c.\]
\item  There exists a first order formula $\Phi(x,y,\vec z,\vec w)$ in the
  language of $\V$ such that for all $A, B\in \V$, and $a,c\in A$,
  $b,d\in B$,
  \[A\times B \models \Phi\bigl(\<a,b\>, \<c,d\>, [\vec 0, \vec 1],
  [\vec 1, \vec 0] \bigr) 
  \quad \text{ if and only if } \quad a=c.\] 
\item $\V$ has BFC.
\end{enumerate}
Moreover, when the above equivalent conditions hold, the formula $\Phi$ in
(3) can be chosen to be preserved by direct products and direct
factors and for every $A\in \V$, the map
  \begin{quote}
    $\vec e \mapsto \Phi^A(\cdot, \cdot, \vec e)$
  \end{quote}
is a bijection between the set of central elements and the Boolean
algebra of factor
congruences of $A$.
\end{theorem}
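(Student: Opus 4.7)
The plan is to establish the equivalence of (1)--(5) via a mixture of direct implications and reduction to a common Mal'cev-style term condition, which will also yield the explicit formula $\Phi$ needed for the moreover clause. Two directions are essentially free: (2)$\Rightarrow$(1) by applying the Determining Property both to $(\theta,\ths)$ and to $(\ths,\theta)$, reading off the complementary pair $(\vec e,\vec f)$ and transferring bijectivity; and (3)$\Rightarrow$(4) by padding $\Phi(x,y,\vec z)$ with a dummy $\vec w$. The opposite direction (4)$\Rightarrow$(3) will emerge a posteriori from the closed chain.

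For (5)$\Rightarrow$(2), BFC gives that complements of factor congruences are unique, so the pair $(\theta,\ths)$ is already determined by $\theta$. Permutability of factor congruences then produces, for each such pair, an element $\vec e$ with $\vec 0\,\theta\,\vec e\,\ths\,\vec 1$; uniqueness of $\vec e$ follows from $\theta\cap\ths=\Delta$. Conversely, each central element carries a canonical direct factorization yielding $(\theta,\ths)$, and uniqueness of complements forces these two assignments to be mutually inverse.

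For (4)$\Rightarrow$(5), fix $A\in\V$ and a pair $(\vec e,\vec f)$ of complementary central elements of $A$. The distinguishing isomorphism $A\cong A_1\times A_2$ transports $\Phi^A(\cdot,\cdot,\vec e,\vec f)$ to $\Phi^{A_1\times A_2}(\cdot,\cdot,[\vec 0,\vec 1],[\vec 1,\vec 0])$, which by hypothesis is first-coordinate equality; pulling back one obtains the factor congruence $\theta$ determined by $(\vec e,\vec f)$. Hence factor congruences are uniformly first-order definable from their central parameters, and standard reasoning (as in \cite{7,ChaJonTar}, combined with permutability) then forces the set of factor congruences to form a distributive sublattice of $\CON(A)$, i.e., BFC.

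The main obstacle is the extraction of an explicit first-order formula $\Phi$ from the abstract algebraic hypotheses, that is, the direction (5)$\Rightarrow$(3) (and, dually, (1)$\Rightarrow$(2)). My plan is to show that any of (1), (2), (4), (5) is equivalent to a Mal'cev-style term condition: there exist terms $t_1,\dots,t_n$ satisfying equational identities involving $\vec 0$ and $\vec 1$ that internalize, within a single algebra, the direct decomposition tagged by a central element. Writing $\Phi(x,y,\vec z)$ as the conjunction of the resulting equations (or as an $\exists\bigwedge p=q$ formula in the style of \cite{va4}) supplies (3); verification in an arbitrary $A\times B$ with $\vec z$ instantiated as $[\vec 0,\vec 1]$ is then a direct calculation. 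The equational shape of $\Phi$ makes preservation under direct products and direct factors automatic, and the bijection $\vec e\mapsto\Phi^A(\cdot,\cdot,\vec e)$ in the moreover clause follows from (2) combined with (4)$\Rightarrow$(5). Finally, (1)$\Rightarrow$(2) is closed by observing that the Mal'cev terms express the complementary central element $\vec f$ explicitly as a term in $\vec e$, so each central element has a unique complement and the pair $(\vec e,\vec f)$ in (1) is already coded by $\vec e$ alone.
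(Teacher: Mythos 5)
Your outline of the easy implications is broadly in line with the paper, but the plan breaks down at the two places where all the real work lives. First, your strategy for producing $\Phi$ --- writing $\Phi(x,y,\vec z)$ as a conjunction of equations, or as an $\exists\bigwedge p=q$ formula in the style of \cite{va4} --- cannot succeed in general: the paper itself exhibits a semilattice-expansion variety $\V^{\o}$ with DFC for which \emph{no} positive and \emph{no} existential formula can satisfy condition (4) (Section~\ref{sec:semil-expans}); those simpler shapes are available only under extra hypotheses such as the Fraser--Horn--Hu property or congruence modularity. The formula actually constructed (Theorem~\ref{th:def_kernel_dfc}) is $\Phi_1\y\Phi_2$ with genuinely alternating prefixes $\exists y_1\forall x_1\dots\exists y_n\forall x_n$ over a matrix of implications built from the terms $L_\alpha,R_\alpha$ of the Mal'cev condition of Theorem~\ref{th:malcev_dfc}; consequently preservation under direct products and direct factors is \emph{not} automatic but is the content of the entire Appendix (Theorem~\ref{t:preserv_ker}), and it depends delicately on that quantifier structure.

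Second, your (4)$\ent$(5) rests on ``standard reasoning'' from uniform definability of factor congruences to distributivity, but no such off-the-shelf argument exists: the paper proves Determining Property $\ent$ BFC (Lemma~\ref{l:DPimplBFC}) by verifying the Bigelow--Burris rectangularity criterion, and that verification uses precisely the preservation of $\Phi$ by direct factors and products together with the fact that the coordinates of a central element are central (Corollary~\ref{c:preserv_Z}). Two smaller gaps: in (5)$\ent$(2) the essential point is injectivity, i.e.\ that $\vec e$ determines the pair $(\th,\ths)$, and this is exactly where distributivity enters (if $\vec 0\,\th\,\vec e\,\ths\,\vec 1$ and $\vec 0\,\phi\,\vec e\,\phis\,\vec 1$ then $\phi\vee\ths=\nabla$, whence $\th=(\phi\vee\ths)\cap\th=\phi\cap\th$ by BFC); ``uniqueness of complements'' does not deliver this. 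Finally, your closure of (1)$\ent$(2) assumes the complementary central element $\vec f$ is a \emph{term} in $\vec e$, which is unsupported in general; the paper instead passes from (1) to the Determining Property by doubling the tuples, taking the new zero to be $(0_1,\dots,0_l,1_1,\dots,1_l)$ and the new one to be $(1_1,\dots,1_l,0_1,\dots,0_l)$, so that a pair of complementary central elements becomes a single central element for the enlarged system.
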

We now briefly sketch the contents of each section. In
Section~\ref{sec:malc-cond-determ}  we
give a Mal'cev condition for an \emph{Left Determining Property} to be
defined there; this condition
is entirely analogous to a Mal'cev condition for BFC. The terms
obtained in this section are the building blocks for our definability
constructions. Section~\ref{sec:canonical-form-dfc} provides an
explicit formula $\Phi$ satisfying 
(3) of Theorem~\ref{th:principal}. This formula is constructed in such
a way that it is preserved by direct products and direct factors; this last
assertion is proved in the Appendix. In
Section~\ref{sec:centr-elem-vari}, we  characterize in 
first-order logic the (pairs of complementary) central elements in a variety with DFC and show that
the coordinates (in a direct product representation) of a central
element are central elements. Several results obtained in the previous
sections are compiled in Section~\ref{sec:main-theorem} to finish the
proof of the Main Theorem. Two examples are treated in
Section~\ref{sec:examples}. In~\ref{sec:01-does-not} we present  a
variety with $\vec 0$ \& $\vec 1$ that has not DFC; this shows that our
definitions do not
trivialize. In~\ref{sec:semil-expans}  we give an optimal formula for
the case of semilattices.

Throughout this paper the following notation will be used. For $A\in
\mathcal{V}$ and 
$\vec{a},\vec{b}\in A^{n}$,  
$\Cg  
^{A}(\vec{a},\vec{b})$ will denote the congruence generated by the set $%
\{(a_k,b_k):1\leq k\leq n\}$.  The symbols
$\nabla$ and $\Delta$ will stand for the universal and trivial
congruence, respectively. We will use  $\th \romb \ths = \0$ in
place of  ``$\th$ and $\ths$ are complementary factor
congruences''. The term algebra (in the language of $\V$) and the
$\V$-free algebra on $X$ will be 
denoted by $T(X)$ and $F(X)$, respectively.
The $i$-th component of an element $a$ in a direct product $\Pi_i A_i$
will be called $a^i = \pi_i(a)$; hence, if $a\in 
A_0\times A_1$,  $a = \<a^0,a^1\>$. If elements $a,b$ of an algebra
$A$ are related by a congruence $\th\in\CON A$, we will write
interchangeably $(a,b)\in \th$, $a \,\th\, b$ or $a
\stackrel{\th}{\equiv} b$. This notation generalizes
to tuples, viz.,  $\vec a
\,\th\, \vec b$ means $(a_i,b_i)\in \th$ for all $i$. 

\section{The Left Determining Property: a Mal'cev Condition}\label{sec:malc-cond-determ}
We will use the following \emph{Left Determining Property:}
\begin{quote}
For every $A\in \V$, $\vec e \in A^l$ and $\phi, \phis, \th, \ths\in
\CON A$, if $\phi \times \phis = \0$, $\th \times \ths = \0$, 
$\vec 0 \, \th \, \vec e \, \ths \, \vec 1$ and  $\vec 0 \, \phi \,
\vec e \,\phis \, \vec 1$, then $\th =\phi$.
\end{quote}

It is not difficult to see the implications Determining Property $\ent$ Left Determining Property $\ent$ Weak Determining Property.

The following theorem gives a Mal'cev condition for the Left Determining Property. 
Let  $s_i,t_i $ be $(2i+l)$-ary terms (in the language of
$\V$) for each 
$i=1,\dots,n$ and let $A$ an algebra in the language of $\V$ (not
necessarily in $\V$).  For
$(c,d,\vec e,a_1,b_1,\dots,a_n,b_n) \in A^{2+l+2n}$,
we  define  $\sigma(c,d,\vec e,a_1,b_1,\dots,a_n,b_n)$ to be the tuple
$(x,y,\vec z,x_1,y_1,\dots,x_n,y_n)$ given  by the following recursion:
\begin{align*}
  x&:= c  & x_j&:=s_j(x,y,\vec z,x_1,y_1,\dots,x_{j-1},y_{j-1}) \\
  y&:=c   & y_j &:= b_j \\
  \vec z&:=\vec 0
\end{align*}
We define $\sigma^*$, $\rho$, $\rho^*$ analogously.
\begin{itemize}
\item  $\sigma^*(c,d,\vec e,a_1,b_1,\dots,a_n,b_n) :=
  (x,y,\vec z,x_1,y_1,\dots,x_n,y_n)$ where:
  \begin{align*}
    x&:= c  & x_j&:=t_j(x,y,\vec z,x_1,y_1,\dots,x_{j-1},y_{j-1}) \\
    y&:=d   & y_j &:= b_j \\
    \vec z&:=\vec 1
  \end{align*}
\item  $\rho(c,d,\vec e,a_1,b_1,\dots,a_n,b_n) :=
  (x,y,\vec z,x_1,y_1,\dots,x_n,y_n)$ where:
\begin{align*}
  x&:= c  & x_j &:= a_j \\
  y&:=d   & y_j&:=s_j(x,y,\vec z,x_1,y_1,\dots,x_{j-1},y_{j-1}) \\
 \vec z&:=\vec 0
\end{align*}
\item  $\rho^*(c,d,\vec e,a_1,b_1,\dots,a_n,b_n) :=
  (x,y,\vec z,x_1,y_1,\dots,x_n,y_n)$ where:
  \begin{align*}
    x&:= c  & x_j &:= a_j \\
    y&:=d   & y_j&:=t_j(x,y,\vec z,x_1,y_1,\dots,x_{j-1},y_{j-1}) \\
    \vec z&:= \vec 1
  \end{align*}
\end{itemize}

We first state without proof a lemma concerning these functions.
\begin{lemma}\label{l:igualdad_congr}
For every $c,d,\vec e,a_1,b_1,\dots,a_n,b_n \in A$, we have the following
identities:
\begin{multline}
 \Cg(c,d) \o \Cg(\vec e,\vec 0) \o \bigvee_i
\Cg(a_i,s_i(c,d,\vec e,a_1,b_1,\dots,a_{i-1},b_{i-1}))= \\
 = \Cg((c,d,\vec e,a_1,b_1,\dots,a_n,b_n),\sigma(c,d,\vec e,a_1,b_1,\dots,a_n,b_n))
\end{multline}
\begin{align*}
\Cg(\vec e,\vec 1) \o \bigvee_i \Cg(a_i,t_i(c,d,\vec e,\dots,a_{i-1},b_{i-1})) &=
\Cg((c,d,\vec e,\dots,a_n,b_n),\sigma^*(c,d,\vec e,\dots,a_n,b_n))\\
\Cg(\vec e,\vec 0) \o \bigvee_i \Cg(b_i,s_i(c,d,\vec e,\dots,a_{i-1},b_{i-1})) & =
\Cg((c,d,\vec e,\dots,a_n,b_n),\rho(c,d,\vec e,\dots,a_n,b_n))\\
\Cg(\vec e,\vec 1) \o \bigvee_i  \Cg(b_i,t_i(c,d,\vec e,\dots,a_{i-1},b_{i-1})) & =
  \Cg((c,d,\vec e,\dots,a_n,b_n),\rho^*(c,d,\vec e,\dots,a_n,b_n))
\end{align*}
\end{lemma}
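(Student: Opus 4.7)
The plan is to derive all four identities from two elementary ingredients. First, for any two tuples $\vec u = (u_1,\dots,u_k)$ and $\vec v = (v_1,\dots,v_k)$ of equal length in $A$, the congruence generated by the pair $(\vec u,\vec v)$ equals $\bigvee_{i=1}^k \Cg(u_i,v_i)$, so coordinates where $u_i = v_i$ contribute the trivial congruence and may be dropped. Second, the substitution property of congruences: if a congruence $\th$ relates each argument of a term on the left with the corresponding argument on the right, then the two outputs are $\th$-related.

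For the first identity I unfold the recursion to obtain the explicit tuple
\[
\sigma(c,d,\vec e,a_1,b_1,\dots,a_n,b_n) \;=\; (c,\,c,\,\vec 0,\,s_1',\,b_1,\,s_2',\,b_2,\dots,s_n',\,b_n),
\]
where $s_j'$ abbreviates $s_j(c,c,\vec 0,s_1',b_1,\dots,s_{j-1}',b_{j-1})$, the inner $\sigma$-value. Pairing this coordinatewise with $(c,d,\vec e,a_1,b_1,\dots,a_n,b_n)$, the pairs $(c,c)$ and $(b_j,b_j)$ are trivial and the remaining pairs yield
\[
\text{RHS} \;=\; \Cg(c,d) \o \Cg(\vec e,\vec 0) \o \bigvee_{j=1}^n \Cg(a_j,\,s_j').
\]
To match this with the LHS I induct on $j$. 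Modulo the congruence $\Cg(c,d) \o \Cg(\vec e,\vec 0) \o \bigvee_{i<j} \Cg(a_i,\,s_i(c,d,\vec e,\dots,a_{i-1},b_{i-1}))$ one has $c\equiv d$, $\vec e \equiv \vec 0$, and, by inductive hypothesis, $a_i \equiv s_i(c,d,\vec e,\dots) \equiv s_i'$ for every $i<j$; the substitution property then gives $s_j(c,d,\vec e,\dots) \equiv s_j'$, so adjoining either $\Cg(a_j,\,s_j(c,d,\vec e,\dots))$ or $\Cg(a_j,\,s_j')$ to the accumulated congruence produces the same result. Iterating through $j=1,\dots,n$ yields LHS $=$ RHS.

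The remaining three identities follow from the same template. For $\sigma^*$ the assignments $x:=c,\,y:=d$ make the pairs $(c,c)$ and $(d,d)$ trivial, explaining the absence of $\Cg(c,d)$ on the corresponding LHS, while $\vec z:=\vec 1$ provides the $\Cg(\vec e,\vec 1)$ summand and supplies the substitution step $\vec e\equiv\vec 1$. For $\rho$ and $\rho^*$ the roles of $x_j$ and $y_j$ swap (now $x_j:=a_j$ and $y_j$ is the term value), so the nontrivial coordinate pairs become $(b_j,\,s_j(\dots))$ or $(b_j,\,t_j(\dots))$, and the inductive step uses $b_i \equiv s_i(\dots)$ (respectively $t_i(\dots)$) in place of $a_i \equiv s_i(\dots)$. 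The only real work is the induction, and its sole content is the substitution property of congruences; no subtle obstacle is expected, which is probably why the authors present the lemma without proof.
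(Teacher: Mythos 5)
Your proof is correct. The paper actually states this lemma without proof, so there is nothing to compare against; your argument --- expanding the generating pairs coordinatewise, discarding the trivial ones, and then using an induction on $j$ together with the substitution property to replace each $s_j$ (resp.\ $t_j$) evaluated at the original arguments by its value at the $\sigma$- (resp.\ $\sigma^*$-, $\rho$-, $\rho^*$-) modified arguments modulo the accumulated join --- is exactly the standard argument the authors are implicitly relying on, and it correctly accounts for why $\Cg(c,d)$ appears only in the first identity.
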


In the proofs that follow, we will repeatedly find elements in an
 algebra that solve congruential ``equations'' of the form
\[a \ \stackrel{\th}{\equiv} \  x  \ \stackrel{\ths}{\equiv} \ b\]
when $\th\romb\ths = \0$. Using the functions $\sigma$, $\sigma^*$,
$\rho$ and  $\rho^*$  just defined, we can
 assert conclusions from the way elements like $x$ are
 constructed. This is the content of the following immediate consequences
 of Lemma~\ref{l:igualdad_congr}:

\begin{corollary}\label{l:recursion_aes}
Given $c,d,\vec e\in A$ and $\th,\ths\in\CON A$
such that  $\vec 0 \, \th \, \vec e \, \ths \, \vec 1$ and $c \, \th \, d $, 
for every $a_i$ and $b_i$ with $i=1,\dots,n$ such that
 \begin{equation*}
\begin{split}
s_1(c,d,\vec e)\stackrel{\th}{\equiv} &\  a_1 \stackrel{\ths}{\equiv}
t_1(c,d,\vec e)\\ 
s_2(c,d,\vec e,a_1,b_1)\stackrel{\th}{\equiv} &\  a_2 \stackrel{\ths}{\equiv}
 t_2(c,d,\vec e,a_1,b_1)\\ 
& \dots \\
s_{i+1}(c,d,\vec e,a_1,b_1,\dots,a_{i},b_{i})\stackrel{\th}{\equiv} & \
a_{i+1}
\stackrel{\;\ths}{\equiv}t_{i+1}(c,d,\vec e,a_1,b_1,\dots,a_{i},b_{i}) 
\end{split}
\end{equation*}
we have
\begin{equation}\label{eq:4}
t(\sigma(c,d,\vec e,a_1,b_1,\dots,a_n,b_n)) 
\stackrel{\th}{\equiv}
t(c,d,\vec e,a_1,b_1,\dots,a_n,b_n)  \stackrel{\ths}{\equiv}
t(\sigma^*(c,d,\vec e,a_1,b_1,\dots,a_n,b_n)) 
\end{equation}
for every $(2n+l+2)$-ary term $t$ in the language of $\V$.
\end{corollary}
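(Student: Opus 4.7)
The plan is to read each of the given $\th$-hypotheses (resp.\ $\ths$-hypotheses) as a containment of a principal congruence in $\th$ (resp.\ $\ths$), and to combine them through Lemma~\ref{l:igualdad_congr} to obtain componentwise $\th$- and $\ths$-relations between the tuple $(c,d,\vec e,a_1,b_1,\dots,a_n,b_n)$ and its images under $\sigma$ and $\sigma^*$. The conclusion \eqref{eq:4} then follows immediately, because any congruence is compatible with term operations, so applying the $(2n+l+2)$-ary term $t$ to coordinatewise $\th$- (resp.\ $\ths$-) related tuples yields $\th$- (resp.\ $\ths$-) related values.

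For the first equivalence in \eqref{eq:4}, the hypotheses $c \, \th \, d$, $\vec 0 \, \th \, \vec e$, and $s_i(c,d,\vec e, a_1, b_1, \dots, a_{i-1}, b_{i-1}) \, \th \, a_i$ for $i=1,\dots,n$ say exactly that each of the congruences $\Cg(c,d)$, $\Cg(\vec e, \vec 0)$ and $\Cg(a_i, s_i(c,d,\vec e, a_1, b_1, \dots, a_{i-1}, b_{i-1}))$ is contained in $\th$. Their join is therefore contained in $\th$ as well, and by the first identity of Lemma~\ref{l:igualdad_congr} this join equals $\Cg((c,d,\vec e, a_1, b_1, \dots, a_n, b_n), \sigma(c,d,\vec e, a_1, b_1, \dots, a_n, b_n))$. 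Hence the two tuples are $\th$-related coordinatewise, and applying $t$ gives the desired $\th$-equivalence. The second equivalence is proved in exactly the same way using $\sigma^*$ and the second identity of Lemma~\ref{l:igualdad_congr}: the hypotheses $\vec e \, \ths \, \vec 1$ and $a_i \, \ths \, t_i(c,d,\vec e, a_1, b_1, \dots, a_{i-1}, b_{i-1})$ show that all the relevant generators belong to $\ths$, so the same chain of reasoning, carried out on the right-hand side of the second identity of Lemma~\ref{l:igualdad_congr}, yields $t(c,d,\vec e, a_1, b_1, \dots, a_n, b_n) \, \ths \, t(\sigma^*(c,d,\vec e, a_1, b_1, \dots, a_n, b_n))$.

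There is essentially no serious obstacle here: once Lemma~\ref{l:igualdad_congr} is in hand, the corollary is merely the observation that the generators named on the left-hand sides of its identities are precisely the objects controlled by the hypotheses on $\th$ and $\ths$. The only point to be careful about is the standard convention that $\Cg$ of a pair of tuples coincides with the join of the $\Cg$'s of the coordinate pairs, which is exactly what lets us pass from tuple-level congruence containment to coordinatewise relatedness before applying $t$.
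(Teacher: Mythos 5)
Your proof is correct and matches the paper's intent exactly: the paper states this corollary as an ``immediate consequence'' of Lemma~\ref{l:igualdad_congr}, and your argument---reading the hypotheses as containments of the generating principal congruences in $\th$ (resp.\ $\ths$), invoking the first two identities of the lemma to get coordinatewise relatedness of the tuple with its $\sigma$- and $\sigma^*$-images, and then applying compatibility of congruences with term operations---is precisely the routine verification being left to the reader.
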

The next result is entirely analogous.
\begin{corollary}\label{l:recursion_bes}
Suppose $c,d,\vec e\in A$ and $\phi,\phis\in\CON A$
such that  $\vec 0 \, \phi \,
\vec e \,\phis \, \vec 1$. If $a_i$ and $b_i$ satisfy
\begin{equation*}
\begin{split}
s_1(c,d,\vec e)\stackrel{\phi}{\equiv} &\  b_1 \stackrel{\phis}{\equiv}
t_1(c,d,\vec e)\\ 
& \dots \\
s_{i+1}(c,d,\vec e,a_1,b_1,\dots,a_{i},b_{i})\stackrel{\phi}{\equiv} & \ b_{i+1}
 \stackrel{\;\phis}{\equiv}t_{i+1}(c,d,\vec e,a_1,b_1,\dots,a_{i},b_{i}) 
\end{split}
\end{equation*}
we obtain
\begin{equation}\label{eq:5}
 t(\rho(c,d,\vec e,a_1,b_1,\dots,a_n,b_n)) \stackrel{\phi}{\equiv}
 t(c,d,\vec e,a_1,b_1,\dots,a_n,b_n)  \stackrel{\phis}{\equiv}
 t(\rho^*(c,d,\vec e,a_1,b_1,\dots,a_n,b_n))
\end{equation}
for every $(2n+l+2)$-ary term $t$ in the language of $\V$. 
\end{corollary}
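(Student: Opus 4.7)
The plan is to read off both equivalences in~(\ref{eq:5}) directly from Lemma~\ref{l:igualdad_congr}, mirroring the way Corollary~\ref{l:recursion_aes} follows from the identities for $\sigma$ and $\sigma^*$. The functions $\rho$ and $\rho^*$ have been engineered for precisely this purpose: the tuple $(c,d,\vec e,a_1,b_1,\dots,a_n,b_n)$ differs from $\rho(c,d,\vec e,\vec a,\vec b)$ only in that $\vec e$ is replaced by $\vec 0$ and each $b_j$ by $s_j(c,d,\vec e,a_1,b_1,\dots,a_{j-1},b_{j-1})$, and dually for $\rho^*$ (with $\vec 1$ and $t_j$ in place of $\vec 0$ and $s_j$).

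For the $\phi$-half, I would argue as follows. The hypothesis $\vec 0 \, \phi \, \vec e$ gives $\Cg(\vec e, \vec 0) \subseteq \phi$, while the recursive hypotheses $s_j(\dots) \, \phi \, b_j$ give $\Cg(b_j, s_j(\dots)) \subseteq \phi$ for every $j$. Taking the join and invoking the third identity of Lemma~\ref{l:igualdad_congr} yields
\[\Cg\bigl((c,d,\vec e,\vec a,\vec b),\rho(c,d,\vec e,\vec a,\vec b)\bigr) \subseteq \phi.\]
Since congruences are preserved by term operations, for every term $t$ we obtain $t(\rho(c,d,\vec e,\vec a,\vec b)) \stackrel{\phi}{\equiv} t(c,d,\vec e,\vec a,\vec b)$, which is the left half of~(\ref{eq:5}).

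The $\phis$-half is entirely symmetric, using $\vec e \, \phis \, \vec 1$, the hypotheses $b_j \, \phis \, t_j(\dots)$, and the fourth identity of Lemma~\ref{l:igualdad_congr} to conclude that $\Cg\bigl((c,d,\vec e,\vec a,\vec b),\rho^*(c,d,\vec e,\vec a,\vec b)\bigr) \subseteq \phis$. Combining the two containments gives~(\ref{eq:5}) for every term $t$. There is no real obstacle here: once Lemma~\ref{l:igualdad_congr} is granted, the proof is a transcription of that of Corollary~\ref{l:recursion_aes} with the roles of $a_i$ and $b_i$ interchanged. The only point worth emphasizing is that the $a_j$'s play no active role in the hypotheses — they appear in $\rho$ and $\rho^*$ merely as inert parameters, consistently with the fact that no congruential equation is imposed on them in the statement.
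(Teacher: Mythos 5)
Your proof is correct and follows exactly the route the paper intends: the paper presents this corollary as an immediate consequence of Lemma~\ref{l:igualdad_congr}, and your argument simply fills in the (routine) details of that deduction, namely that the hypotheses place the left-hand sides of the third and fourth identities of that lemma inside $\phi$ and $\phis$ respectively, so the tuples $(c,d,\vec e,\vec a,\vec b)$ and $\rho(\dots)$ (resp.\ $\rho^*(\dots)$) are coordinatewise congruent and any term $t$ preserves this. Your closing remark about the $a_j$'s being inert is also accurate.
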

We will also need the following (Gr\"{a}tzer's)
version of  Mal'cev's key observation on principal congruences.

\begin{lemma}\label{malsev}Let $A$ be any algebra and let $a,b\in A,$
 $\vec{a},\vec{b}\in
A^{n}.$ Then $(a,b)\in \Cg ^{A}(\vec{a},\vec{b})$ if and only if 
there
exist $(n+m)$-ary terms 
$p_{1}(\vec{x},\vec{u}),\dots,p_{k}(\vec{x},\vec{u})$,
with $k$ odd and, $\vec{u}\in A^{m}$ such that: 
\begin{equation*}
\begin{split}
a& =p_{1}(\vec{a},\vec{u}) \\
p_{i}(\vec{b},\vec{u})& =p_{i+1}(\vec{b},\vec{u}),i\text{
odd} \\
p_{i}(\vec{a},\vec{u})& =p_{i+1}(\vec{a},\vec{u}),i\text{
even} \\
p_{k}(\vec{b},\vec{u})& =b
\end{split}
\end{equation*}
\end{lemma}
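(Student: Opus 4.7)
The statement is the standard Gr\"atzer--Mal'cev characterization of principal congruences, and I would prove the two directions separately. For the easier $(\Leftarrow)$ direction, assume the terms $p_{1},\dots,p_{k}$ and parameters $\vec{u}$ exist. Since $\Cg^{A}(\vec{a},\vec{b})$ is a congruence containing each $(a_{j},b_{j})$, it is closed under term operations and hence $p_{i}(\vec{a},\vec{u})\equiv p_{i}(\vec{b},\vec{u})\pmod{\Cg^{A}(\vec{a},\vec{b})}$ for every $i$. Interleaving these congruences with the hypothesized equalities yields
\begin{equation*}
a=p_{1}(\vec{a},\vec{u})\equiv p_{1}(\vec{b},\vec{u})=p_{2}(\vec{b},\vec{u})\equiv p_{2}(\vec{a},\vec{u})=\dots\equiv p_{k}(\vec{b},\vec{u})=b,
\end{equation*}
which shows $(a,b)\in\Cg^{A}(\vec{a},\vec{b})$.

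For the converse, let $\Theta\subseteq A\times A$ be the set of pairs admitting such a Mal'cev chain. It suffices to prove that $\Theta$ is a congruence of $A$ containing all the generators $(a_{j},b_{j})$, because then $\Cg^{A}(\vec{a},\vec{b})\subseteq\Theta$ by minimality. Membership of each generator is witnessed by $k=1$ and the projection $p_{1}(\vec{x},\vec{u}):=x_{j}$; reflexivity by $k=1$ and $p_{1}(\vec{x},u):=u$ with parameter $u=a$. Compatibility with an $m$-ary basic operation $f$ follows by first padding the $m$ individual chains to a common odd length (repeating the endpoint via a fresh parameter projection, which trivially preserves the alternating equalities) and then defining the new terms coordinate-wise as $f(p^{(1)}_{j},\dots,p^{(m)}_{j})$.

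The main obstacle is symmetry (and, analogously, transitivity), because a raw reversal of a chain of odd length $k$ would swap the roles of $\vec{a}$ and $\vec{b}$ in the asymmetric alternation prescribed by the lemma. I would circumvent this by extending $\vec{u}$ with two fresh parameters holding the endpoints $b$ and $a$ and bookending the reversed sequence $p_{k},p_{k-1},\dots,p_{1}$ with projection terms that return these endpoints; a direct parity check, exploiting that $j=k+1-i$ has parity opposite to $i$ when $k$ is odd, shows that the resulting length-$(k+2)$ chain satisfies the Mal'cev conditions for $(b,a)$. Transitivity is handled along the same lines by concatenating two chains, inserting a single constant-projection bridging term to repair parity, producing an overall odd length. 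Once $\Theta$ is verified to be a congruence, the $(\Rightarrow)$ direction follows immediately.
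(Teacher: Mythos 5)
Your proof is correct: the $(\Leftarrow)$ direction is the routine chain computation, and for $(\Rightarrow)$ your relation $\Theta$ is indeed a congruence containing the generators (the parity bookkeeping for symmetry and the single bridging term for transitivity both check out), which gives $\Cg^{A}(\vec a,\vec b)\subseteq\Theta$ as needed. The paper itself offers no proof to compare against --- it states the lemma as a known result (Gr\"atzer's version of Mal'cev's description of principal congruences) --- and your argument is the standard one found in the textbook literature.
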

We will use $\largo{\alpha}$ to denote the length of a word $\alpha$
and $\varepsilon$ will denote the empty word.
\begin{theorem}\label{th:malcev_dfc}
Let $\V$ a variety with $\vec 0$ \& $\vec 1$. $\V$ has Left Determining Property if and only
if there exist  integers 
$N=2k$  and $n$, $(2i+l)$-ary terms $s_i$ and $t_i$ for each
$i=1,\dots,n$, and for every word  $\alpha$ in the alphabet $\{1,\dots,N\}$ of
length no greater than $N$ there are terms $L_\alpha, R_\alpha $ such that 
\medskip

\noindent\fbox{$\largo{\alpha}=N$}
\begin{equation}
\begin{split}\label{eq:dfc_N}
L_\alpha(\termmmphi) & \id R_\alpha(\termmmphi) \\
L_\alpha(\termmmphis) & \id R_\alpha(\termmmphis) 
\end{split}
\end{equation}	
\noindent\fbox{$\largo{\alpha}=0$}
\begin{equation}\label{eq:1}
\begin{split}
x   \id L_\varepsilon(x,y,\vec z,x_1,y_1,\dots,x_n,y_n) \\
R_\varepsilon(x,y,\vec z,x_1,y_1,\dots,x_n,y_n)  \id y
\end{split}
\end{equation}

\begin{align}
L_\varepsilon(\termmmphi) & \id L_{1}(\termmmphi)\label{eq:dfc_0_phi_1}\\
R_{j}(\termmmphi) & \id L_{j+1}(\termmmphi) \qquad \text{ if
$1\leq j \leq N-1$} \label{eq:dfc_0_phi}\\
R_{ N}(\termmmphi) & \id R_{\varepsilon}(\termmmphi) \label{eq:dfc_0_phi_N}
\end{align}

\noindent\fbox{$0<\largo{\alpha}<N$}
\smallskip

 If $\largo{\alpha}$ is even then
\begin{align}
L_\alpha(\termmmphi) & \id L_{\alpha 1}(\termmmphi)\label{eq:dfc_par_phi_1} \\
R_{\alpha j}(\termmmphi) & \id L_{\alpha (j+1)}(\termmmphi)\label{eq:dfc_par_phi} \qquad
\text{ if $1\leq j \leq k-1$} \\
R_{\alpha k}(\termmmphi) & \id R_{\alpha}(\termmmphi)\label{eq:dfc_par_phi_k}\\
\begin{split}\label{eq:dfc_par_phi*}
L_\alpha(\termmmphis) & \id L_{\alpha (k+1)}(\termmmphis) \\
R_{\alpha j}(\termmmphis) & \id L_{\alpha (j+1)}(\termmmphis) \qquad
\text{ if $k+1\leq j \leq N-1$} \\
R_{\alpha N}(\termmmphis) & \id R_{\alpha}(\termmmphis)
\end{split}
\end{align}

 If $\largo{\alpha}$ is odd then
\begin{align}
\begin{split}
L_\alpha(\termmmth) & \id L_{\alpha 1}(\termmmth)\label{eq:2} \\
R_{\alpha j}(\termmmth) & \id L_{\alpha (j+1)}(\termmmth) 
\qquad
\text{ if $1\leq j \leq k-1$} \\
R_{\alpha k}(\termmmth) & \id R_{\alpha}(\termmmth)
\end{split}\\
\begin{split}
L_\alpha(\termmmths) & \id L_{\alpha (k+1)}(\termmmths)\label{eq:3} \\
R_{\alpha j}(\termmmths) & \id L_{\alpha
(j+1)}(\termmmths)
\qquad \text{ if $k+1\leq j \leq N-1$} \\
R_{\alpha N}(\termmmths) & \id R_{\alpha}(\termmmths)
\end{split}
\end{align}
where $\vec X =
(x,y,\vec z,x_1,y_1,\dots, x_n,y_n)$ and  $\sigma$, $\sigma^*$,
$\rho$ and  $\rho^*$ are defined relative to  $s_i,$ $t_i$,
on $T(\vec X)$.
\end{theorem}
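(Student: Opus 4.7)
\emph{Sufficiency $(\Leftarrow)$.} Fix $A\in\V$, $\vec e\in A^l$, and complementary factor congruences $\th\romb\ths=\0$, $\phi\romb\phis=\0$ with $\vec 0\,\th\,\vec e\,\ths\,\vec 1$ and $\vec 0\,\phi\,\vec e\,\phis\,\vec 1$; assume $c\,\th\,d$ and aim for $c\,\phi\,d$. Because each pair of complementary factor congruences gives unique interpolants, recursively pick $a_i,b_i\in A$ so that $a_i$ is the unique element satisfying $s_i(\cdots)\,\th\,a_i\,\ths\,t_i(\cdots)$ and $b_i$ is the corresponding $\phi$-$\phis$-interpolant. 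Setting $\vec X:=(c,d,\vec e,a_1,b_1,\dots,a_n,b_n)$, Corollaries~\ref{l:recursion_aes} and~\ref{l:recursion_bes} then yield, for every term $t$, $t(\sigma(\vec X))\,\th\,t(\vec X)\,\ths\,t(\sigma^*(\vec X))$ and $t(\rho(\vec X))\,\phi\,t(\vec X)\,\phis\,t(\rho^*(\vec X))$. An induction on $N-|\alpha|$ now establishes $L_\alpha(\vec X)=R_\alpha(\vec X)$ for every $|\alpha|>0$: at the leaves, substituting $\rho$ and $\rho^*$ into \eqref{eq:dfc_N} gives both $L_\alpha\,\phi\,R_\alpha$ and $L_\alpha\,\phis\,R_\alpha$, so equality follows from $\phi\cap\phis=\0$; at an odd interior node, chains~\eqref{eq:2}--\eqref{eq:3} composed through the inductive equalities $L_{\alpha j}=R_{\alpha j}$ produce a $\th$-chain and a $\ths$-chain connecting $L_\alpha$ to $R_\alpha$; even positive nodes are treated in parallel using the $\rho$-identities and $\phi\cap\phis=\0$. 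At the root only the $\rho$-identities~\eqref{eq:dfc_0_phi_1}--\eqref{eq:dfc_0_phi_N} are available, so we obtain merely the single $\phi$-chain $L_\varepsilon(\vec X)\,\phi\,R_\varepsilon(\vec X)$, which by~\eqref{eq:1} reads $c\,\phi\,d$. Thus $\th\subseteq\phi$, and the reverse inclusion follows by reapplying the same argument with the two pairs of factor congruences interchanged.

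\emph{Necessity $(\Rightarrow)$.} I set up a maximally generic instance of the LDP hypotheses and read the terms off from Mal'cev's lemma. Take the $\V$-free algebra $F$ on generators $\{c,d,\vec z\}\cup\{x_i,y_i\}_{i<\omega}$ and, in parallel with a recursive choice of candidate terms $s_i,t_i$ (each in the variables of the previous stage), construct a congruence $\eta$ on $F$ such that $F/\eta$ carries two pairs of genuine complementary factor congruences, namely the images of the formal joins $\Cg(c,d)\vee\Cg(\vec z,\vec 0)\vee\bigvee_i\Cg(x_i,s_i)$ and $\Cg(\vec z,\vec 1)\vee\bigvee_i\Cg(x_i,t_i)$ and of their $y_i$-analogues, with $[\vec z]_\eta$ acting as common central element. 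LDP applied in $F/\eta$ then forces $(c,d)\in\phi\vee\eta$, and Lemma~\ref{malsev} turns this containment into a finite system of term identities in $F$. The nested structure of two Mal'cev chains (one for each pair of factor congruences) organises these identities into the prescribed tree of depth $N=2k$, with the alternation of $\sigma$- and $\rho$-identities reflecting which pair is active at each depth, and the terms $L_\alpha,R_\alpha,s_i,t_i$ read directly off the chain, the number $n$ being an upper bound on the interpolants actually used.

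\emph{Main obstacle.} Once the recursion machinery from the corollaries is in hand, sufficiency is routine bookkeeping, tracking parity and the asymmetric appearance of $\rho$-only identities at the root. The real difficulty lies in the necessity: defining $\eta$ narrowly enough that the formal congruences descend to genuine complementary factor congruences of $F/\eta$ (and not merely to congruences), orchestrating the mutually recursive choice of $s_i,t_i$ so that both pairs of factor congruences are served simultaneously and each term stays within the variable supply of the previous stage, and recognising the precise recursive tree pattern~\eqref{eq:dfc_N}--\eqref{eq:3} in the Mal'cev chain that LDP produces—this combinatorial alignment is where most of the work lives.
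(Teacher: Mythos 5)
Your sufficiency argument is essentially the paper's own proof: the same recursive choice of interpolants $a_i,b_i$, the same downward induction on $\largo{\alpha}$ using $\rho,\rho^*$ with $\phi\cap\phis=\0$ at leaves and even levels, $\sigma,\sigma^*$ with $\th\cap\ths=\0$ at odd levels, and the single $\phi$-chain at the root; making the symmetry step ($\phi\subseteq\th$) explicit is a small improvement over the paper, which leaves it tacit. That half is fine.

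The necessity direction, however, has a genuine gap, and it sits exactly where you park it under ``main obstacle.'' Two problems. First, your setup is circular: you index the fresh generators as $x_i,y_i$ and define the would-be factor congruences using ``a recursive choice of candidate terms $s_i,t_i$,'' but the $s_i,t_i$ are supposed to be \emph{outputs} of the theorem, and nothing in your sketch says how to choose them so that the four congruences compose to $\nabla$ and hence are candidates for complementary factor congruences. The paper escapes this by indexing the new variables by \emph{all} pairs of terms, $x_{p,q}$ and $y_{p,q}$ for $p,q\in T(Z^\infty)$ (iterated through $Z^{n*}$ and $Z^\infty$), so that $\th\circ\ths=\nabla$ holds for free ($p\,\th\,x_{p,q}\,\ths\,q$ for every pair), and only at the very end extracts the finitely many variables $x_{s_i,t_i}$ actually occurring in the $L_\alpha,R_\alpha$, thereby \emph{reading off} $s_i,t_i$ rather than choosing them in advance. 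Second, and more seriously, you never construct $\eta$. Composing to $\nabla$ is not enough: you must kill the intersections, and the congruence that does this is the fixed point of the alternating recursion $\delta_{n+1}:=(\th\vee\epsilon_n)\cap(\ths\vee\epsilon_n)$, $\epsilon_{n+1}:=(\phi\vee\delta_n)\cap(\phis\vee\delta_n)$, $\delta_\infty:=\bigvee_n\delta_n=\bigvee_n\epsilon_n$. Only with this $\eta=\delta_\infty$ does one get $(\phi\vee\delta_\infty)\cap(\phis\vee\delta_\infty)=\delta_\infty$ (and likewise for $\th,\ths$), so that the images in $F/\delta_\infty$ are genuinely complementary and LDP applies. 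Moreover, this same $\delta/\epsilon$ alternation is what generates the depth-$N$ tree: unwinding $(x,y)\in\phi\circ^{2N}\delta_N^N$ level by level, with Lemma~\ref{malsev} applied at each unwinding, is precisely what produces the alternating pattern \eqref{eq:dfc_N}--\eqref{eq:3}, with $\th,\ths$ governing odd depths and $\phi,\phis$ even ones. Without the explicit construction of $\delta_\infty$ and the compactness step extracting $N$, the ``finite system of term identities'' you invoke cannot be organised into the required tree, so the necessity direction is not yet a proof.
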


\begin{proof}
($\tne$) Assume the existence of the terms, and suppose $\phi \romb 
\phis= \0$, $\th \romb \ths = \0$, $\vec 0 \, \th \, e \, \ths \, \vec 1$, $\vec 0 \, \phi \,
e \,\phis \, \vec 1$, and $c \, \th \, d $. We want to see $c \, \phi \, d$. There exist unique $a_i,b_i$ satisfying the following relations:
\begin{equation*}\label{eq:recursion}
\begin{split}
s_1(c,d,\vec e)\stackrel{\th}{\equiv} &\  a_1 \stackrel{\ths}{\equiv}
t_1(c,d,\vec e)\\ 
s_1(c,d,\vec e)\stackrel{\phi}{\equiv} &\  b_1 \stackrel{\phis}{\equiv}
t_1(c,d,\vec e)\\ 
& \dots \\
s_{j+1}(c,d,\vec e,a_1,b_1,\dots,a_{j},b_{j})\stackrel{\th}{\equiv} & \
a_{j+1}
\stackrel{\;\ths}{\equiv}t_{j+1}(c,d,\vec e,a_1,b_1,\dots,a_{j},b_{j}) \\
s_{j+1}(c,d,\vec e,a_1,b_1,\dots,a_{j},b_{j})\stackrel{\phi}{\equiv} & \ b_{j+1}
\stackrel{\;\phis}{\equiv}t_{j+1}(c,d,\vec e,a_1,b_1,\dots,a_{j},b_{j}) 
\end{split}
\end{equation*}
Note that their definition combines schemes in
Corollaries~\ref{l:recursion_aes} and~\ref{l:recursion_bes}. So, by
equations~(\ref{eq:4}) and (\ref{eq:5}) we have, taking $t:=L_\alpha, R_\alpha$:
\begin{gather}
\begin{split}\label{eq:6}
L_\alpha(\sigma(c,d,\vec e,a_1,b_1,\dots,a_n,b_n)) &\stackrel{\th}{\equiv}
L_\alpha(c,d,\vec e,a_1,b_1,\dots,a_n,b_n)  \stackrel{\ths}{\equiv}
L_\alpha(\sigma^*(c,d,\vec e,a_1,b_1,\dots,a_n,b_n)) \\
L_\alpha(\rho(c,d,\vec e,a_1,b_1,\dots,a_n,b_n)) &\stackrel{\phi}{\equiv}
L_\alpha(c,d,\vec e,a_1,b_1,\dots,a_n,b_n)  \stackrel{\phis}{\equiv}
L_\alpha(\rho^*(c,d,\vec e,a_1,b_1,\dots,a_n,b_n))
\end{split}	\\
\begin{split}\label{eq:36}
R_\alpha(\sigma(c,d,\vec e,a_1,b_1,\dots,a_n,b_n)) &\stackrel{\th}{\equiv}
R_\alpha(c,d,\vec e,a_1,b_1,\dots,a_n,b_n)  \stackrel{\ths}{\equiv}
R_\alpha(\sigma^*(c,d,\vec e,a_1,b_1,\dots,a_n,b_n)) \\
R_\alpha(\rho(c,d,\vec e,a_1,b_1,\dots,a_n,b_n)) &\stackrel{\phi}{\equiv}
R_\alpha(c,d,\vec e,a_1,b_1,\dots,a_n,b_n)  \stackrel{\phis}{\equiv}
R_\alpha(\rho^*(c,d,\vec e,a_1,b_1,\dots,a_n,b_n))
\end{split}	
\end{gather}
for every $\alpha$. We'll prove  inductively  that 
\begin{equation}\label{eq:37}
L_\alpha(c,d,\vec e,a_1,b_1,\dots,a_n,b_n)=R_\alpha(c,d,\vec
e,a_1,b_1,\dots,a_n,b_n)
\end{equation}
for all $\alpha\neq\varepsilon$. Take $\alpha$ such that $\largo{\alpha}=N$,
then
\begin{align*}
L_\alpha(c,d,\vec e,a_1,b_1,\dots,a_n,b_n)  & \stackrel{\phi}{\equiv} 
L_\alpha(\rho(c,d,\vec e,a_1,b_1,\dots,a_n,b_n))
&& \text{by equations~(\ref{eq:6})}\\
&=  R_{\alpha}(\rho(c,d,\vec e,a_1,b_1,\dots,a_n,b_n)) &&  \text{using identities~(\ref{eq:dfc_N})}\\
&  \stackrel{\phi}{\equiv}  R_{\alpha}(c,d,\vec e,a_1,b_1,\dots,a_n,b_n) && \text{by
equations~(\ref{eq:36})}\\
\intertext{And,}
L_\alpha(c,d,\vec e,a_1,b_1,\dots,a_n,b_n)  & \stackrel{\;\phis}{\equiv} 
L_\alpha(\rho^*(c,d,\vec e,a_1,b_1,\dots,a_n,b_n))
&& \text{by equations~(\ref{eq:6})}\\
&=  R_{\alpha}(\rho^*(c,d,\vec e,a_1,b_1,\dots,a_n,b_n)) &&  \text{using identities~(\ref{eq:dfc_N})}\\
&  \stackrel{\;\phis}{\equiv}  R_{\alpha}(c,d,\vec e,a_1,b_1,\dots,a_n,b_n) && \text{by
equations~(\ref{eq:36})}
\end{align*} 
Hence $\bigl(L_\alpha(c,d,\vec e,a_1,b_1,\dots,a_{n},b_{n}),R_\alpha(c,d,\vec e,a_1,b_1,\dots,a_{n},b_{n})\bigr)\in
\phi \cap \phis = \0$ and then $L_\alpha(c,d,\vec e,a_1,b_1,\dots,a_{n},b_{n})=R_\alpha(c,d,\vec e,a_1,b_1,\dots,a_{n},b_{n}).$

Take $\alpha\neq\varepsilon$ 
of odd length and assume 
\[L_{\alpha
j}(c,d,\vec e,a_1,b_1,\dots,a_{n},b_{n}) = R_{\alpha j}(c,d,\vec
e,a_1,b_1,\dots,a_{n},b_{n})\]
 for every
$j=1,\dots,N$. We check that 
\[L_\alpha(c,d,\vec e,a_1,b_1,\dots,a_{n},b_{n}) \stackrel{\th}{\equiv}  R_\alpha(c,d,\vec e,a_1,b_1,\dots,a_{n},b_{n})\]
\begin{align*}
L_\alpha(c,d,\vec e,a_1,b_1,\dots,a_n,b_n)  & \stackrel{\th}{\equiv} 
L_\alpha(\sigma(c,d,\vec e,a_1,b_1,\dots,a_n,b_n))
&& \text{by equations~(\ref{eq:6})} \\
&= L_{\alpha 1}(\sigma(c,d,\vec e,a_1,b_1,\dots,a_n,b_n))
&& \text{by identities~(\ref{eq:2})} \\
 & \stackrel{\th}{\equiv}  L_{\alpha 1}(c,d,\vec e,a_1,b_1,\dots,a_n,b_n) &&
\text{by equations~(\ref{eq:6})} \\
& = R_{\alpha 1}(c,d,\vec e,a_1,b_1,\dots,a_n,b_n)
&& \text{by inductive hypothesis} \\
 & \stackrel{\th}{\equiv}  R_{\alpha 1}(\sigma(c,d,\vec e,a_1,b_1,\dots,a_n,b_n))
&& \text{by equations~(\ref{eq:36})}  \\
 & \stackrel{\th}{\equiv}  \ \cdots && \text{using~(\ref{eq:2}) and iterating}\\ 
& = R_{\alpha k}(\sigma(c,d,\vec e,a_1,b_1,\dots,a_n,b_n)) \\
& = R_{\alpha}(\sigma(c,d,\vec e,a_1,b_1,\dots,a_n,b_n)) && \text{using
identities~(\ref{eq:2})}\\
&  \stackrel{\th}{\equiv}  R_{\alpha} (c,d,\vec e,a_1,b_1,\dots,a_n,b_n),
\end{align*} 
The same way for $\ths$:
\begin{align*}
L_\alpha(c,d,\vec e,a_1,b_1,\dots,a_n,b_n)  & \stackrel{\ths}{\equiv} 
L_\alpha(\sigma^*(c,d,\vec e,a_1,b_1,\dots,a_n,b_n))
&& \text{by equations~(\ref{eq:6})}\\
&= L_{\alpha (k+1)}(\sigma^*(c,d,\vec e,a_1,b_1,\dots,a_n,b_n))
&& \text{by identities~(\ref{eq:3})} \\
 & \stackrel{\ths}{\equiv}  L_{\alpha (k+1)}(c,d,\vec e,a_1,b_1,\dots,a_n,b_n) &&
\text{by equations~(\ref{eq:6})} \\
& = R_{\alpha (k+1)}(c,d,\vec e,a_1,b_1,\dots,a_n,b_n)
&& \text{by inductive hypothesis} \\
 & \stackrel{\ths}{\equiv}  R_{\alpha (k+1)}(\sigma^*(c,d,\vec e,a_1,b_1,\dots,a_n,b_n))
&& \text{by equations~(\ref{eq:36})}\\
 & \stackrel{\ths}{\equiv}  \ \cdots && \text{using~(\ref{eq:3}) and iterating\dots}\\
& = R_{\alpha N}(\sigma^*(c,d,\vec e,a_1,b_1,\dots,a_n,b_n)) \\
& = R_{\alpha}(\sigma^*(c,d,\vec e,a_1,b_1,\dots,a_n,b_n)) && \text{using
identities~(\ref{eq:3})}\\
&  \stackrel{\ths}{\equiv}  R_{\alpha} (c,d,\vec e,a_1,b_1,\dots,a_n,b_n)&& \text{by equations~(\ref{eq:36})}
\end{align*} 
Hence $\bigl(L_\alpha(c,d,\vec e,a_1,b_1,\dots,a_{n},b_{n}),R_\alpha(c,d,\vec e,a_1,b_1,\dots,a_{n},b_{n})\bigr)\in
\th\cap\ths = \0$, and therefore they are equal. 

If $\alpha\neq\varepsilon$ has even length,
\begin{align*}
L_\alpha(c,d,\vec e,a_1,b_1,\dots,a_n,b_n)  & \stackrel{\phi}{\equiv} 
L_\alpha(\rho(c,d,\vec e,a_1,b_1,\dots,a_n,b_n))
&& \text{by equations~(\ref{eq:6})}\\
&= L_{\alpha 1}(\rho(c,d,\vec e,a_1,b_1,\dots,a_n,b_n))
&& \text{by identity~(\ref{eq:dfc_par_phi_1})} \\
 & \stackrel{\phi}{\equiv}  L_{\alpha 1}(c,d,\vec e,a_1,b_1,\dots,a_n,b_n) &&
\text{by equations~(\ref{eq:6})} \\
& = R_{\alpha 1}(c,d,\vec e,a_1,b_1,\dots,a_n,b_n)
&& \text{by inductive hypothesis} \\
 & \stackrel{\phi}{\equiv}  R_{\alpha 1}(\rho(c,d,\vec e,a_1,b_1,\dots,a_n,b_n))
&& \text{by equations~(\ref{eq:36})}\\
 & \stackrel{\phi}{\equiv}  \ \cdots && \text{using~(\ref{eq:dfc_par_phi}) and iterating\dots}\\
& = R_{\alpha k}(\rho(c,d,\vec e,a_1,b_1,\dots,a_n,b_n)) \\
& = R_{\alpha}(\rho(c,d,\vec e,a_1,b_1,\dots,a_n,b_n)) && \text{using
identity~(\ref{eq:dfc_par_phi_k})}\\
&  \stackrel{\phi}{\equiv}  R_{\alpha} (c,d,\vec e,a_1,b_1,\dots,a_n,b_n)&& \text{by equations~(\ref{eq:36})}
\end{align*} 
proves $\bigl(L_\alpha(c,d,\vec e,a_1,b_1,\dots,a_{n},b_{n}),R_\alpha(c,d,\vec e,a_1,b_1,\dots,a_{n},b_{n})\bigr)\in
\phi$, and
\begin{align*}
L_\alpha(c,d,\vec e,a_1,b_1,\dots,a_n,b_n)  & \stackrel{\;\phis}{\equiv} 
L_\alpha(\rho^*(c,d,\vec e,a_1,b_1,\dots,a_n,b_n))
&& \text{by equations~(\ref{eq:6})}\\
&= L_{\alpha (k+1)}(\rho^*(c,d,\vec e,a_1,b_1,\dots,a_n,b_n))
&& \text{by identities~(\ref{eq:dfc_par_phi*})} \\
 & \stackrel{\;\phis}{\equiv}  L_{\alpha (k+1)}(c,d,\vec e,a_1,b_1,\dots,a_n,b_n) &&
\text{by equations~(\ref{eq:6})} \\
& = R_{\alpha (k+1)}(c,d,\vec e,a_1,b_1,\dots,a_n,b_n)
&& \text{by inductive hypothesis} \\
 & \stackrel{\;\phis}{\equiv}  R_{\alpha (k+1)}(\rho^*(c,d,\vec e,a_1,b_1,\dots,a_n,b_n))
&& \text{by equations~(\ref{eq:36})}\\
 & \stackrel{\;\phis}{\equiv}  \ \cdots && \text{using~(\ref{eq:dfc_par_phi*}) and iterating\dots}\\
& = R_{\alpha N}(\rho^*(c,d,\vec e,a_1,b_1,\dots,a_n,b_n)) \\
& = R_{\alpha}(\rho^*(c,d,\vec e,a_1,b_1,\dots,a_n,b_n)) && \text{using
identities~(\ref{eq:dfc_par_phi*})}\\
&  \stackrel{\;\phis}{\equiv}  R_{\alpha} (c,d,\vec e,a_1,b_1,\dots,a_n,b_n)&& \text{by equations~(\ref{eq:36})}
\end{align*} 
completes this case. Finally, we have:
\begin{align*}
c &=L_\varepsilon(\rho(c,d,\vec e,a_1,b_1,\dots,a_n,b_n)) &&  \text{using identities~(\ref{eq:1})}\\ 
&= L_{ 1}(\rho(c,d,\vec e,a_1,b_1,\dots,a_n,b_n))
&& \text{by identity~(\ref{eq:dfc_0_phi_1})} \\
 & \stackrel{\phi}{\equiv}  L_{ 1}(c,d,\vec e,a_1,b_1,\dots,a_n,b_n) &&
\text{by equations~(\ref{eq:6})}\\
& = R_{ 1}(c,d,\vec e,a_1,b_1,\dots,a_n,b_n)
&& \text{by~(\ref{eq:37})} \\
 & \stackrel{\phi}{\equiv}  R_{ 1}(\rho(c,d,\vec e,a_1,b_1,\dots,a_n,b_n))
&& \text{by equations~(\ref{eq:36})}\\
 & \stackrel{\phi}{\equiv}  \ \cdots && \text{using equations~(\ref{eq:dfc_0_phi}) and iterating\dots}\\
& = R_{N}(\rho(c,d,\vec e,a_1,b_1,\dots,a_n,b_n)) \\
& = R_\varepsilon(\rho(c,d,\vec e,a_1,b_1,\dots,a_n,b_n)) && \text{using
identity~(\ref{eq:dfc_0_phi_N})}\\
& = d  &&  \text{using identities~(\ref{eq:1})}
\end{align*} 

This proves $(c,d)\in\phi$.

\noindent ($\ent$) For each set of variables $Y$, define
\begin{align*}
Y^* &:= Y \cup \{x_{p,q} : p,q \in T(Y)\} \cup \{y_{p,q} : p,q \in
T(Y)\}  \\
Y^{0*} &:= Y \\
Y^{(n+1)*} &:= (Y^{n*})^* \\
Y^\infty &:= \bigcup_{n\geq 1} Y^{n*}
\end{align*}
where $x_{p,q}$ and $y_{p,q}$ are new variables. Take $Z:=\{x,y,z_1,\dots,z_l\}$
and $F:=F(Z^\infty)$. Define the \emph{index} of $p\in T(Z^\infty)$ as $ind(p) = \min\{j:
p\in T(Z^{j*})\}$; it is evident that if $ind(x_{p,q})\leq
ind(x_{r,s})$, neither $p$ nor $q$ can be terms depending on
$x_{r,s}$. The same holds for $ind(x_{p,q})\leq
ind(y_{r,s})$ and symmetrically, and for  $ind(y_{p,q})\leq
ind(y_{r,s})$. 

Take the following congruences  on $F$:
\begin{align*}
\th &:= \Cg(\vec 0,\vec z) \o \Cg(x,y) \o \bigvee\{\Cg(p,x_{p,q}): p,q \in F\} & \delta_0 & = \epsilon_0 := \0^F \\
\ths &:= \Cg(\vec 1,\vec z) \o \bigvee\{\Cg(x_{p,q},q): p,q \in F\} &
\delta_{n+1} &:= (\th \o \epsilon_n)\cap (\ths \o \epsilon_n)\\ 
\phi &:= \Cg(\vec 0,\vec z)  \o \bigvee\{\Cg(p,y_{p,q}): p,q \in
F\}&\epsilon_{n+1} &:= (\phi \o \delta_n)\cap (\phis \o \delta_n)
\\ 
\phis &:= \Cg(\vec 1,\vec z) \o \bigvee\{\Cg(y_{p,q},q): p,q \in
F\}&\delta_\infty &:= \bigvee_{n\geq 0} \delta_n =  \bigvee_{n\geq 0} \epsilon_n
\end{align*}
By construction, $\phi \circ \phis = \th \circ \ths = \nabla^F$,
$\vec 0 \, \th \, \vec z \, \ths \, \vec 1$, $\vec 0 \, \phi \, \vec z \,\phis
\, \vec 1$, and $x \, \th \, y $. 
Observe that if $(a,b)\in (\phi\o\delta_\infty) \cap
 (\phis\o\delta_\infty)$ then there exists an $n\geq 0$ such that
$(a,b)\in (\phi\o\delta_n) \cap (\phis\o\delta_n)$. But this
congruence is exactly $\epsilon_{n+1}$, hence $(a,b) \in
\epsilon_{n+1} \subseteq \delta_\infty$. We may conclude $(\phi\o\delta_\infty) \cap
 (\phis\o\delta_\infty) = \delta_\infty $. The same happens with
 $\theta$ and $\ths$,
hence 
\[(\phi\o\delta_\infty)/\delta_\infty \romb 
(\phis\o\delta_\infty)/\delta_\infty = \0 \qquad (\th\o\delta_\infty)/\delta_\infty \romb
(\ths\o\delta_\infty)/\delta_\infty = \0\]
in $F/\delta_\infty$. Then, by Left Determining Property we have  $(x/\delta_\infty,y/\delta_\infty) \in
(\phi\o\delta_\infty)/\delta_\infty$ and hence  $(x,y) \in
\phi \o \delta_\infty$. We may find an even integer $N=2k$ such that  $(x,y) \in
\phi \circ^{2N} \delta_N^N$, where $\delta_N^N$ is the result of replacing each
occurrence of ``$\o$'' in the definition of $\delta_N$ by $\circ^N$,
the $n$-fold relational product. 

We will inductively define terms $L_\alpha$ and $R_\alpha$, for $\alpha$ a word of
length at most $N$ in the alphabet $\{1,\dots,N\}$ such that:
\begin{align}
x & = L_\varepsilon  &    y = R_\varepsilon \label{eq:primera_ec_congruencias}\\ 
	(L_\varepsilon,L_{ 1}) &\in\phi  &  (R_{ N},R_\varepsilon) \in\phi\label{eq:fin_primer_grupo}  \\
(L_{ i},R_{ i}) &\in\delta_{N}^N    \qquad  \text{ if $1\leq i \leq
N$}\label{eq:29} \\
(R_{ i},L_{ (i+1)}) &\in\phi   \qquad   \text{ if $1\leq i \leq N-1$}\label{eq:principio_seg_grupo}.
\end{align}
For $\alpha\neq\varepsilon$ with $\largo{\alpha}<N$ an odd integer, 
\begin{align}
	(L_\alpha,L_{\alpha 1}) &\in\th  &  (R_{\alpha k},R_\alpha)
        &\in\th \label{eq:25}\\ 
(L_\alpha,L_{\alpha (k+1)}) &\in\ths  &  (R_{\alpha N},R_\alpha)
&\in\ths \label{eq:26}\\
(L_{\alpha i},R_{\alpha i}) &\in\epsilon_{N-\largo{\alpha}}^N \label{eq:24} \quad \text{ if $1\leq i \leq
N$} \\
(R_{\alpha i},L_{\alpha (i+1)}) &\in\th  \qquad  \text{ if $1\leq i \leq
  k-1$}\label{eq:27}\\ 
(R_{\alpha i},L_{\alpha (i+1)}) &\in\ths   \qquad \text{ if $k+1\leq i \leq N-1$}\label{eq:28}
\end{align}
and for $\alpha\neq\varepsilon$ with $\largo{\alpha}<N$ an even integer:
\begin{align}
	(L_\alpha,L_{\alpha 1}) &\in\phi  &  (R_{\alpha k},R_\alpha) \in\phi \label{eq:8}\\
(L_\alpha,L_{\alpha (k+1)}) &\in\phis  &  (R_{\alpha N},R_\alpha) \in\phis \label{eq:fin_ter_grupo}\\
(L_{\alpha i},R_{\alpha i}) &\in\delta_{N-\largo{\alpha}}^N \quad   \text{ if $1\leq i \leq
N$}
  \label{eq:7} \\
(R_{\alpha i},L_{\alpha (i+1)}) &\in\phi  \qquad  \text{ if $1\leq i \leq k-1$}\label{eq:penultima}\\
(R_{\alpha i},L_{\alpha (i+1)}) &\in\phis   \qquad   \text{ if $k+1\leq i
  \leq N-1$}\label{eq:ultima_ec_congruencias}
\end{align}

We take $ L_\varepsilon:=x$ and $R_\varepsilon:=y$. Since we know $(x,y) \in
\phi \circ^{2N} \delta_N^N$, we define  $L_i$,
$R_i$ for $i=1,\dots,N$ to be terms satisfying 
\begin{equation*}
x \; \phi \; L_1 \;\delta_N^N\; R_1 \; \phi \; L_{2} \;\delta_N^N
\cdots \phi \;  L_N \;\delta_N^N\; R_N \;\phi \; y 
\end{equation*}
Note that these terms
satisfy~(\ref{eq:primera_ec_congruencias})--(\ref{eq:ultima_ec_congruencias})
whenever they can be checked.

Suppose we have defined the terms  corresponding to words with length less
than or equal to $j$ and that they satisfy 
equations
among~(\ref{eq:primera_ec_congruencias})--(\ref{eq:ultima_ec_congruencias}) 
that involve words of length $j$ or shorter. Then we shall define
terms corresponding to words with length  equal to $j+1$  such that
the totality of terms  
defined satisfy  equations
among~(\ref{eq:primera_ec_congruencias})--(\ref{eq:ultima_ec_congruencias}) which
involve words of length $j+1$ or shorter. We have two cases:

\noindent \textbf{Case 1: $j$ odd.} Take
$\alpha$, with $\largo{\alpha}=j$. We have $L_\alpha$ and $R_\alpha$
and by~(\ref{eq:7}), they satisfy $(L_{\alpha },R_{\alpha })
\in \delta^N_{N-j+1} = (\th \circ^N
\epsilon^N_{N-j})\cap (\ths \circ^N \epsilon^N_{N-j})$. We  define  $L_{\alpha i}$ and
$R_{\alpha i}$ for $i=1,\dots,N$ such that:
\begin{equation}\label{eq:23}
\begin{split}
L_\alpha \; \th \; L_{\alpha 1} \;\epsilon_{N-j}^N\; R_{\alpha 1} \;
\th \; L_{\alpha 2} 
\cdots R_{\alpha (k-1)} \;\th \;L_{\alpha k} \;\epsilon_{N-j}^N \;
R_{\alpha k} \;\th \; R_\alpha  \\ 
L_\alpha  \; \ths \;
L_{\alpha (k+1)} \;\epsilon_{N-j}^N\; R_{\alpha (k+1)} \; \ths \;
L_{\alpha (k+2)} \cdots L_{\alpha N} \;\epsilon_{N-j}^N\;
R_{\alpha N} \;\ths  \; R_\alpha.
\end{split}
\end{equation}
The equations
among~(\ref{eq:primera_ec_congruencias})--(\ref{eq:ultima_ec_congruencias})
which involve terms $L_{\mu}$ and $R_{\mu}$ with $\largo{\mu}=j+1$
are~(\ref{eq:25})--(\ref{eq:28}). 
All of them can be inferred from~(\ref{eq:23}).

\noindent \textbf{Case 2: $j$ even.} Take 
$\alpha$, with $\largo{\alpha}=j$. We define  $L_{\alpha i}$ and
$R_{\alpha i}$ for $i=1,\dots,N$. By~(\ref{eq:24}) and by the
definition of $\epsilon_{N-j+1}^N$ we may define our terms satisfying:
\begin{equation*}
\begin{split}
L_\alpha\; \phi \; L_{\alpha 1} \;\delta_{N-j}^N\; R_{\alpha 1} \;
\phi \; L_{\alpha 2} 
\cdots R_{\alpha (k-1)} \;\phi \;  L_{\alpha k} \;\delta_{N-j}^N \; R_{\alpha k} \;\phi \; R_\alpha \\
L_\alpha \;\phi\; L_{\alpha (k+1)} \;\delta_{N-j}^N\; R_{\alpha (k+1)} \; \phis \;
L_{\alpha (k+2)} \cdots  
R_{\alpha N} \;\phis  R_\alpha.
\end{split}
\end{equation*}

From this we immediately conclude~(\ref{eq:8})--(\ref{eq:ultima_ec_congruencias}).
\medskip
Let  $V\subseteq
Z^\infty$ be a finite set of variables   such that if we replace $\th$, $\ths$,
$\phi$ and $\phis$, respectively, by the following compact congruences: 
\begin{align*}
\th_0 &:= \Cg(\vec 0,\vec z) \o \Cg(x,y) \o \bigvee\{\Cg(p,x_{p,q}):x_{p,q}  \in V\}
 \\
\ths_0 &:= \Cg(\vec 1,\vec z) \o \bigvee\{\Cg(x_{p,q},q): x_{p,q} \in V\} \\
\phi_0 &:= \Cg(\vec 0,\vec z)  \o \bigvee\{\Cg(p,y_{p,q}):y_{ p,q} \in V\}\\
\phis_0 &:= \Cg(\vec 1,\vec z) \o \bigvee\{\Cg(y_{p,q},q): y_{p,q} \in V\}
\end{align*}
we still obtain congruential relations~(\ref{eq:primera_ec_congruencias})--(\ref{eq:ultima_ec_congruencias}) 
(excepting (\ref{eq:29}), (\ref{eq:7}) and (\ref{eq:24})). It is clear
that if we enlarge the set $V$ to a new set $X$, the properties enumerated will
still hold.

Let $V_0$ be the union  of $V$ and the (finite) set of variables occurring in  terms
$L_\alpha, R_\alpha$ with $\alpha$ a word. Define:
\[V_{n+1} := V_n \cup \bigcup\{Var(p),Var(q): x_{p,q}
\in V_n \text{ or } y_{p,q} \in V_n\}\]
Hence, for some $M$ we have $V_M =
V_{M+1}$; set 
\[X := \bigl(V_M \cup \{x_{p,q}: y_{p,q} \in V_M\} \cup
\{y_{p,q}: x_{p,q} \in V_M\}\bigl) \setminus \{x,y,z_1,\dots,z_l\}.\]
Order $X$ totally so that $ind:X
\func \omega$ is non decreasing and $x_{p,q}$ is the immediate
predecessor of $y_{p,q}$, and add $x,y,z_1,\dots,z_l$ at the beginning; we have 
\[\vec X =
(x,y,\vec z,x_{s_1,t_1},y_{s_1,t_1},\dots,x_{s_{n},t_{n}},y_{s_{n},t_{n}}) =
(x,y,\vec z,x_1,y_1,\dots, x_n,y_n).\]
We may consider then
$L_\alpha=L_\alpha(\vec X)$ and the same for
$R_\alpha$, and by the remarks after the definition of $Z^\infty$,
we may assume $s_i = s_i(x,y,\vec z,x_1,y_1,\dots,x_{i-1},y_{i-1})$ and the
same for $t_i$. Finally, define $\sigma ,\rho, \sigma^* ,\rho^*$ on
the term algebra  $T(\vec X)$ with respect to $s_i, t_i$.

We claim that these $L$'s, $R$'s, $s$'s and $t$'s satisfy the
Mal'cev condition. Let's check it for
identity~(\ref{eq:dfc_par_phi_1}). Take $\alpha$ with
$0<\largo{\alpha}< N$ an even integer.
By Lemma~\ref{l:igualdad_congr} we have 
\begin{equation*}
\phi = \Cg(\vec X, \rho(\vec X)). 
\end{equation*}
Since we have
$L_\alpha  \;\phi\; L_{\alpha 1}$ by
equation~(\ref{eq:8}), Lemma~\ref{malsev} gives us terms $p_i$ such that for some tuple
$\vec u$,  $F$ satisfies:
\begin{align*}
L_\alpha &= p_1\bigl(\vec X, \vec u\bigr) \\
p_1\bigl(\rho(\vec X), \vec u\bigr) &= p_2 \bigl(\rho(\vec X), \vec u\bigr) \\
p_2\bigl(\vec X, \vec u\bigr) &= p_3 \bigl(\vec X, \vec u\bigr) \\ & \cdots \\
p_m\bigl(\rho(\vec X), \vec u\bigr) &= L_{\alpha 1}
\end{align*}
Since $\vec u = \vec u (\vec X, \vec Y)$ can be construed as members of $T( Z^\infty)$, we obtain the following laws
for $\V$:
\begin{align*}
 L_\alpha(\vec X) &\id p_1(\vec X,  \vec u (\vec X, \vec Y))  \\
 p_1(\rho(\vec X),  \vec u (\vec X, \vec Y)) &\id p_2 (\rho(\vec X),  \vec u (\vec X, \vec Y)) \\
 p_2(\vec X,  \vec u (\vec X, \vec Y)) &\id p_3 (\vec X,  \vec u (\vec X, \vec Y)) 	\\ 
		& \cdots \\
 p_m(\rho(\vec X),  \vec u (\vec X, \vec Y)) &\id L_{\alpha 1}(\vec X)
\end{align*}
Replacing $\vec X$ by $\rho(\vec X)$ everywhere and noting that
$\rho(\rho(\vec X))=\rho(\vec X)$, we have
\begin{align*}
 L_\alpha(\rho(\vec X)) &\id p_1(\rho(\vec X), \vec u (\rho(\vec X), \vec Y))  \\
  p_1(\rho(\vec X), \vec u (\rho(\vec X), \vec Y)) &\id p_2 (\rho(\vec X), \vec u (\rho(\vec X), \vec Y)) \\
 p_2(\rho(\vec X), \vec u (\rho(\vec X), \vec Y)) &\id p_3 (\rho(\vec X), \vec u (\rho(\vec X), \vec Y)) 	\\ 
 & \cdots \\
  p_m(\rho(\vec X), \vec u (\rho(\vec X), \vec Y))  &\id L_{\alpha 1}(\rho(\vec X))
\end{align*}
and by transitivity,
\[ \V \models L_{\alpha} (\rho(\vec X)) \id L_{\alpha 1}(\rho(\vec X)),\]
which is what we were looking for. The other identities are obtained similarly.
\end{proof} 

The proof of the previous theorem follows the line of a proof for a
Mal'cev condition for BFC. One such condition that closely parallels
ours was personally communicated to us by R.~Willard.

\section{A Canonical Form of DFC}\label{sec:canonical-form-dfc}
We will assume in this section that $\V$ has the Determining
Property. Since the Determining Property implies Left Determining Property, we may define the following formulas
in the language of $\V$:
 \[ \Psi_m := \bigwedge_{\largo{\alpha} = m} 
\left(\Bigl(\bigwedge_{\gamma\neq\varepsilon}
L_{\alpha\gamma}(\termuno) = R_{\alpha\gamma}(\termuno)\Bigr)
\ \impl \ L_\alpha(\termuno) = R_\alpha(\termuno)\right) \]
where every subindex varies over words of length less than or
equal to $N$; so an expression of the form
``$\bigwedge_{\gamma\neq\varepsilon} L_{\alpha\gamma} = R_{\alpha\gamma}$'' should be
read like ``$\bigwedge \{ L_{\alpha\gamma} = R_{\alpha\gamma} :\gamma\neq\varepsilon
\text{ and } \largo{\alpha\gamma}\leq N \}$''. Hence $\Psi_N=\bigl(\bigwedge_{\largo{\beta}=N} L_\beta(\termuno)=R_\beta(\termuno)\bigr)$. (The
antecedent ``vanishes''.) 
\begin{lemma}\label{l:def_q_centro}
Let $A\in\V$  and let $\phi, \phis\in \CON A$ such that $\phi
\romb \phis =\0$, and $\vec 0 \,\phi\,\vec e\,\phis\,\vec 1$. Then for
all $c,d\in A$,  $A$ satisfies $ \Phi_1(c,d,\vec e)$, where
\begin{equation}\label{eq:def_q_centro}
\Phi_1(x,y,\vec z)\ :=\ 
\exists y_1 \forall x_1  \dots  \exists y_n \forall x_n \bigwedge_{m = 1}^{k}
\Psi_{2m}
\end{equation}
with $n,k$ as in Theorem~\ref{th:malcev_dfc}.
\end{lemma}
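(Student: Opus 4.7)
The plan is to verify $\Phi_1(c, d, \vec e)$ by giving an explicit strategy for the $\exists y_1 \forall x_1 \cdots \exists y_n \forall x_n$ alternation, arranged so that after the game ends the parameters satisfy the hypotheses of Corollary~\ref{l:recursion_bes}; the verification of each conjunct $\Psi_{2m}$ then replays the even-length portion of the forward direction of Theorem~\ref{th:malcev_dfc}.

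The existential strategy is the obvious one. Having already fixed $y_1,\dots,y_{i-1}$ in response to the played $x_1,\dots,x_{i-1}$, I pick $y_i \in A$ satisfying
\[s_i(c, d, \vec e, x_1, y_1, \dots, x_{i-1}, y_{i-1}) \,\phi\, y_i \,\phis\, t_i(c, d, \vec e, x_1, y_1, \dots, x_{i-1}, y_{i-1}).\]
Such $y_i$ exists because complementarity of $\phi$ and $\phis$ gives $\phi \circ \phis = \nabla$, and the choice respects the quantifier order since $s_i$ and $t_i$ involve only variables of index strictly less than $i$. Writing $\vec X := (c, d, \vec e, x_1, y_1, \dots, x_n, y_n)$, Corollary~\ref{l:recursion_bes} yields, for every term $t$,
\[t(\rho(\vec X)) \,\phi\, t(\vec X) \,\phis\, t(\rho^*(\vec X)).\]

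It remains to check that each $\Psi_{2m}$ holds at $\vec X$ for $m=1,\dots,k$. Fix $\alpha$ with $\largo{\alpha}=2m$ and assume the antecedent of the corresponding implication, namely $L_{\alpha\gamma}(\vec X)=R_{\alpha\gamma}(\vec X)$ for all nonempty $\gamma$ with $\largo{\alpha\gamma}\le N$. The key computation is the ``zig-zag''
\[L_\alpha(\vec X) \,\phi\, L_\alpha(\rho(\vec X)) = L_{\alpha 1}(\rho(\vec X)) \,\phi\, L_{\alpha 1}(\vec X) = R_{\alpha 1}(\vec X) \,\phi\, R_{\alpha 1}(\rho(\vec X)) = L_{\alpha 2}(\rho(\vec X)) \,\phi\, \cdots \,\phi\, R_\alpha(\vec X),\]
whose $\phi$-steps come from the previous display, whose equalities on $\rho(\vec X)$ are the even-length Mal'cev identities~(\ref{eq:dfc_par_phi_1}), (\ref{eq:dfc_par_phi}), (\ref{eq:dfc_par_phi_k}), and whose equalities $L_{\alpha i}(\vec X)=R_{\alpha i}(\vec X)$ for $i=1,\dots,N$ are instances of the antecedent (taking $\gamma = i$). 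The analogous $\phis$-chain using $\rho^*$ and identities~(\ref{eq:dfc_par_phi*}) delivers $L_\alpha(\vec X) \,\phis\, R_\alpha(\vec X)$; together with $\phi \cap \phis = \0$ we obtain $L_\alpha(\vec X) = R_\alpha(\vec X)$. The edge case $\largo{\alpha}=N$ inside $\Psi_{2k}$ is easier: the antecedent is vacuous and the collapsing equalities come straight from identities~(\ref{eq:dfc_N}).

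The only real subtlety is confirming that the strategy respects the quantifier order, which is handled by the dependency structure of $s_i$ and $t_i$. The reason only the even-indexed $\Psi_{2m}$ appear in $\Phi_1$ is that we have built the witnesses $y_i$ relative to $\phi,\phis$, so only those Mal'cev identities formulated on $\rho,\rho^*$---namely the ones for even $\largo{\alpha}$---are directly available through Corollary~\ref{l:recursion_bes}.
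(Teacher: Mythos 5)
Your proposal is correct and follows essentially the same route as the paper's proof: the same existential strategy (choosing each $y_i$ to interpolate $s_i \,\phi\, y_i \,\phis\, t_i$, which is legitimate since $\phi\circ\phis=\nabla$ and respects the quantifier order), the same appeal to Corollary~\ref{l:recursion_bes}, and the same zig-zag argument via the even-length Mal'cev identities together with $\phi\cap\phis=\0$, with the length-$N$ base case handled by identities~(\ref{eq:dfc_N}).
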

\begin{proof}
Take $b_1$ such that
\[s_1(c,d,\vec e)\stackrel{\phi}{\equiv} \  b_1 \stackrel{\phis}{\equiv}
t_1(c,d,\vec e).\]
Assuming $b_i$ is
already chosen and $a_{i}$ is given, define $b_{i+1}$ such that
\begin{equation*}\label{eq:selecc_ei_Phi1}
s_{i+1}(c,d,\vec e,a_1,b_1,\dots,a_{i},b_{i})\stackrel{\phi}{\equiv}  \ b_{i+1}
 \stackrel{\;\phis}{\equiv}t_{i+1}(c,d,\vec e,a_1,b_1,\dots,a_{i},b_{i}). 
\end{equation*}

The construction of $b_i$'s then corresponds to
equations in Corollary~\ref{l:recursion_bes}. Hence, 
 (\ref{eq:5}) imply that $A$ satisfies
\begin{equation*}
\begin{split}
L_\alpha(c,d,\vec e,a_1,b_1,\dots,a_{n},b_{n}) &\stackrel{\phi}{\equiv}
L_\alpha(\rho(c,d,\vec e,a_1,b_1,\dots,a_{n},b_{n}))\\
R_\alpha(c,d,\vec e,a_1,b_1,\dots,a_{n},b_{n}) &\stackrel{\phi}{\equiv}
R_\alpha(\rho(c,d,\vec e,a_1,b_1,\dots,a_{n},b_{n}))\\
L_\alpha(c,d,\vec e,a_1,b_1,\dots,a_{n},b_{n}) &\stackrel{\;\phis}{\equiv}
L_\alpha(\rho^*(c,d,\vec e,a_1,b_1,\dots,a_{n},b_{n})) \\
R_\alpha(c,d,\vec e,a_1,b_1,\dots,a_{n},b_{n}) &\stackrel{\;\phis}{\equiv}
R_\alpha(\rho^*(c,d,\vec e,a_1,b_1,\dots,a_{n},b_{n})).
\end{split}
\end{equation*}
for all $\alpha$. These together with equations~(\ref{eq:dfc_N}) imply
that for each $\beta$ with $\largo{\beta}=N$, 
\begin{equation}\label{eq:21}
\begin{split}
L_\beta(c,d,\vec e,a_1,b_1,\dots,a_{n},b_{n}) &\stackrel{\phi}{\equiv}
R_\beta(c,d,\vec e,a_1,b_1,\dots,a_{n},b_{n}) \\
L_\beta(c,d,\vec e,a_1,b_1,\dots,a_{n},b_{n}) &\stackrel{\;\phis}{\equiv} 
R_\beta(c,d,\vec e,a_1,b_1,\dots,a_{n},b_{n}).
\end{split}
\end{equation}
Since $\phi \cap \phis = \0$, this yields $A\models \Psi_N(c,d,\vec e,a_1,b_1,\dots,a_{n},b_{n})$.  

Take nonempty $\alpha$ with  $\largo{\alpha}<N$ even and suppose 
\[A\models
\bigwedge_{\gamma\neq\varepsilon} 
L_{\alpha\gamma}(c,d,\vec e,a_1,b_1,\dots,a_{n},b_{n}) =
R_{\alpha\gamma}(c,d,\vec e,a_1,b_1,\dots,a_{n},b_{n}).\] 
We can see that 
$L_\alpha(c,d,\vec e,a_1,b_1,\dots,a_{n},b_{n})
\stackrel{\phi}{\equiv}  R_\alpha(c,d,\vec
e,a_1,b_1,\dots,a_{n},b_{n})$, by using
equations~(\ref{eq:dfc_par_phi_1}), (\ref{eq:dfc_par_phi})
and~(\ref{eq:dfc_par_phi_k}) as follows:
\begin{align*}
L_\alpha(c,d,\vec e,a_1,b_1,\dots,a_{n},b_{n}) &\stackrel{\phi}{\equiv}
L_\alpha(\rho(c,d,\vec e,a_1,b_1,\dots,a_{n},b_{n})) 
&& \text{by equation~(\ref{eq:5})}\\
&= L_{\alpha 1}(\rho(c,d,\vec e,a_1,b_1,\dots,a_{n},b_{n})) 
&& \text{by identity~(\ref{eq:dfc_par_phi_1})} \\
&\stackrel{\phi}{\equiv} L_{\alpha 1}(c,d,\vec e,a_1,b_1,\dots,a_{n},b_{n}) 
&& \text{by equation~(\ref{eq:5})}\\
& = R_{\alpha 1}(c,d,\vec e,a_1,b_1,\dots,a_{n},b_{n})
&& \text{by hypothesis} \\
& \stackrel{\phi}{\equiv} R_{\alpha 1}(\rho(c,d,\vec e,a_1,b_1,\dots,a_{n},b_{n})) 
&& \text{by equation~(\ref{eq:5})}\\
& = \ \cdots && \text{using equations~(\ref{eq:dfc_par_phi}),
  (\ref{eq:21})} \\
& = \ \cdots && \text{and iterating\dots}\\
& = R_{\alpha k}(\rho(c,d,\vec e,a_1,b_1,\dots,a_{n},b_{n})) \\
& = R_{\alpha}(\rho(c,d,\vec e,a_1,b_1,\dots,a_{n},b_{n})) && \text{using
identity~(\ref{eq:dfc_par_phi_k})}\\
& \stackrel{\phi}{\equiv} R_{\alpha}(c,d,\vec e,a_1,b_1,\dots,a_{n},b_{n}) &&  \text{by equation~(\ref{eq:5})} 
\end{align*} 

It can be proved in an entirely analogous fashion (by using
equations~(\ref{eq:dfc_par_phi*})) that $L_\alpha(c,d,\vec e,a_1,b_1,\dots,a_{n},b_{n})
\stackrel{\;\phis}{\equiv}
R_\alpha(c,d,\vec e,a_1,b_1,\dots,a_{n},b_{n})$, which yields 
\[A \models   L_\alpha(c,d,\vec e,a_1,b_1,\dots,a_{n},b_{n}) =
R_\alpha(c,d,\vec e,a_1,b_1,\dots,a_{n},b_{n}),\] 
and we have proved the lemma. 
\end{proof}

\begin{theorem}\label{th:def_kernel_dfc}
Let $\V$ be a variety with the Determining Property, let $A\in\V$  and let $\th, \ths\in \CON A$ such that $\th
\romb \ths=\0$, and $\vec 0 \,\th\,\vec e\,\ths\,\vec 1$. Then 
$c\stackrel{\th}{\equiv} d$ if and only if $A\models
\Phi_1(c,d,\vec e) \y \Phi_2(c,d,\vec e)$ where $\Phi_1$ and $\Phi_2$
are defined as follows:
\begin{equation*}\label{eq:def_kernel_dfc}
\begin{split}
\Phi_1(x,y,\vec z)\ &:=\ 
\exists y_1 \forall x_1  \dots  \exists y_n \forall x_n \bigwedge_{m = 1}^{k}
\Psi_{2m}\\
\Phi_2(x,y,\vec z)\ &:=\ \exists x_1 \forall y_1 \dots \exists x_n \forall y_n  \bigwedge_{m = 1}^{k}
\Psi_{2m-1}  
\end{split}
\end{equation*}
\end{theorem}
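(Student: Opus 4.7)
For the forward direction, given $c\,\theta\,d$, the conjunct $\Phi_1(c,d,\vec e)$ is immediate from Lemma~\ref{l:def_q_centro} applied with $\phi:=\theta$ and $\phi^*:=\theta^*$ --- a statement that uses no relation between $c$ and $d$. For $\Phi_2(c,d,\vec e)$ I mirror the proof of that lemma using the ``starred,'' odd-length half of the Mal'cev condition: since $c\,\theta\,d$, Corollary~\ref{l:recursion_aes} supplies Skolem witnesses $a_i=a_i(b_1,\dots,b_{i-1})$ for the existential prefix of $\Phi_2$ satisfying $s_i(c,d,\vec e,\dots,a_{i-1},b_{i-1})\,\theta\,a_i\,\theta^*\,t_i(\cdot)$ for any adversarial $b_j$'s. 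Equation~(\ref{eq:4}) applied to the $L_\alpha$'s and $R_\alpha$'s combines with the Mal'cev identities~(\ref{eq:2})--(\ref{eq:3}) for odd-length $\alpha$ to prove each implication $\Psi_{2m-1}$ directly: assuming the antecedent $L_{\alpha\gamma}=R_{\alpha\gamma}$, the conclusion $L_\alpha=R_\alpha$ follows by telescoping, giving $L_\alpha\equiv R_\alpha$ modulo $\theta$ and modulo $\theta^*$, hence modulo $\theta\cap\theta^*=\Delta$.

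For the reverse direction, assume $A\models\Phi_1(c,d,\vec e)\wedge\Phi_2(c,d,\vec e)$. From $\Phi_2$ I extract Skolem functions $a_i=f_i(b_1,\dots,b_{i-1})$ witnessing its existential quantifiers. Simultaneously, I choose $b_i$'s by the recursion of Corollary~\ref{l:recursion_bes} with $\phi:=\theta$ and $\phi^*:=\theta^*$: at stage $i$, take any $b_i\in A$ with $s_i(c,d,\vec e,\dots,a_{i-1},b_{i-1})\,\theta\,b_i\,\theta^*\,t_i(\cdot)$, possible because complementary factor congruences permute, so $\theta\circ\theta^*=\nabla$. For the resulting tuple $(a_1,b_1,\dots,a_n,b_n)$, $\bigwedge_m\Psi_{2m-1}$ holds by $\Phi_2$'s Skolem property, while $\bigwedge_m\Psi_{2m}$ holds by the argument of Lemma~\ref{l:def_q_centro}, which derives the even $\Psi$'s directly from the congruential relations on the $b_i$'s supplied by Corollary~\ref{l:recursion_bes}, for any choice of $a_i$'s. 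Thus $\bigwedge_{m=1}^{N}\Psi_m$ is satisfied at our tuple.

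A descending induction on $|\alpha|$ from $N$ to $1$, exploiting the implicational form of each $\Psi_m$, now yields $L_\alpha(c,d,\vec e,a_1,b_1,\dots)=R_\alpha(c,d,\vec e,a_1,b_1,\dots)$ for every nonempty $\alpha$ of length at most $N$. To conclude I transcribe the closing telescoping chain of the $(\tne)$ direction of Theorem~\ref{th:malcev_dfc}: identities~(\ref{eq:1}), (\ref{eq:dfc_0_phi_1}), (\ref{eq:dfc_0_phi}), (\ref{eq:dfc_0_phi_N}) interleave with the $\theta$-congruences from equation~(\ref{eq:5}) of Corollary~\ref{l:recursion_bes} (applicable by our choice of $b_i$'s) and with the equalities $L_j=R_j$ just derived, producing $c=L_\varepsilon(\rho(\dots))=L_1(\rho(\dots))\equiv L_1(\dots)=R_1(\dots)\equiv\cdots\equiv R_N(\rho(\dots))=R_\varepsilon(\rho(\dots))=d$ modulo $\theta$. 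The main technical subtlety is the coordination of the two intertwined recursions in the $(\Leftarrow)$ direction --- Skolem $a_i$'s from $\Phi_2$ against congruence-driven $b_i$'s from Corollary~\ref{l:recursion_bes} --- and the recognition that the $\Phi_1$ half of the hypothesis is in fact redundant (it holds automatically by Lemma~\ref{l:def_q_centro}); all substantive work in this direction is done by $\Phi_2$, which supplies precisely the odd-length equalities $L_\alpha=R_\alpha$ absent from the ambient Mal'cev setup.
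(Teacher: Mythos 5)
Your proposal is correct and follows essentially the same route as the paper: $\Phi_1$ is automatic by Lemma~\ref{l:def_q_centro}; the forward direction builds the $a_i$'s via the $\sigma/\sigma^*$ recursion of Corollary~\ref{l:recursion_aes} and derives the odd $\Psi$'s from identities~(\ref{eq:2})--(\ref{eq:3}); and the reverse direction interleaves the Skolem witnesses of $\Phi_2$ with congruence-chosen $b_i$'s from Corollary~\ref{l:recursion_bes}, obtains all $\Psi_m$ at that tuple, descends to $L_j=R_j$, and telescopes to $c\stackrel{\th}{\equiv}d$. Your observation that the $\Phi_1$ conjunct is redundant as a hypothesis in the reverse direction is also implicit in the paper, whose $(\tne)$ argument assumes only $\Phi_2$.
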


\begin{proof}
Since the definition of  $\Phi_1$ is the same as the one given by
formula~(\ref{eq:def_q_centro}) in Lemma~\ref{l:def_q_centro}, we only have to worry about $\Phi_2(c,d,\vec e)$.
\smallskip

($\ent$) Assume $c\stackrel{\th}{\equiv} d$. Much in the same
way as in the proof
of Lemma~\ref{l:def_q_centro}, define $a_1$ such that 
\[s_1(c,d,\vec e)\stackrel{\th}{\equiv} \  a_1 \stackrel{\ths}{\equiv}
t_1(c,d,\vec e);\]
and assuming $a_i$ is
already chosen and $b_i$ is given, let
\begin{equation*}
s_{i+1}(c,d,\vec e,a_1,b_1,\dots,a_{i},b_{i})\stackrel{\th}{\equiv}  \ a_{i+1}
 \stackrel{\;\ths}{\equiv}t_{i+1}(c,d,\vec e,a_1,b_1,\dots,a_{i},b_{i}). 
\end{equation*}
This choice conforms the pattern of
Corollary~\ref{l:recursion_aes}, so we obtain
\begin{equation}
\begin{split}\label{eq:18}
L_\alpha(c,d,\vec e,a_1,b_1,\dots,a_{n},b_{n}) &\stackrel{\th}{\equiv}
L_\alpha(\sigma(c,d,\vec e,a_1,b_1,\dots,a_{n},b_{n}))\\
R_\alpha(c,d,\vec e,a_1,b_1,\dots,a_{n},b_{n}) &\stackrel{\th}{\equiv}
R_\alpha(\sigma(c,d,\vec e,a_1,b_1,\dots,a_{n},b_{n}))
\end{split}
\end{equation}
\begin{equation}
\begin{split}\label{eq:19}
L_\alpha(c,d,\vec e,a_1,b_1,\dots,a_{n},b_{n}) &\stackrel{\;\ths}{\equiv}
L_\alpha(\sigma^*(c,d,\vec e,a_1,b_1,\dots,a_{n},b_{n})) \\
R_\alpha(c,d,\vec e,a_1,b_1,\dots,a_{n},b_{n}) &\stackrel{\;\ths}{\equiv}
R_\alpha(\sigma^*(c,d,\vec e,a_1,b_1,\dots,a_{n},b_{n})).
\end{split}
\end{equation}
by equations~(\ref{eq:4}). 

If we suppose now that 
\[A\models \bigwedge_{\gamma\neq\varepsilon}
L_{\alpha\gamma}(c,d,\vec e,a_1,b_1,\dots,a_{n},b_{n}) =
R_{\alpha\gamma}(c,d,\vec e,a_1,b_1,\dots,a_{n},b_{n})\]
 for some  $\alpha$ with  $\largo{\alpha}<N$ odd, we'll be able to prove
$L_{\alpha}(c,d,\vec e,a_1,b_1,\dots,a_{n},b_{n}) = R_{\alpha}(c,d,\vec e,a_1,b_1,\dots,a_{n},b_{n})$ by showing (in
the same way as in Lemma~\ref{l:def_q_centro}) that:
\begin{itemize}
\item $L_{\alpha}(c,d,\vec e,a_1,b_1,\dots,a_{n},b_{n})\stackrel{\th}{\equiv}
R_{\alpha}(c,d,\vec e,a_1,b_1,\dots,a_{n},b_{n})$ (this can be accomplished  using ~(\ref{eq:2})
and~(\ref{eq:18})), and
\item $L_{\alpha}(c,d,\vec e,a_1,b_1,\dots,a_{n},b_{n})
\stackrel{\;\ths}{\equiv} R_{\alpha}(c,d,\vec e,a_1,b_1,\dots,a_{n},b_{n})$ (by~(\ref{eq:3})
and~(\ref{eq:19})). 
\end{itemize}
\smallskip

($\tne$) Assume $A\models \Phi_2(c,d,\vec e)$. Take $b_1$ such that 
\[s_1(c,d,\vec e)\stackrel{\th}{\equiv} \  b_1 \stackrel{\ths}{\equiv}
t_1(c,d,\vec e).\]
Let $a_1$
be given by the outermost existential quantifier of $\Phi_2$.

Assuming $b_{i}$ is
already chosen and $a_i$ is the corresponding witness for $\Phi_2$, let
\begin{equation*}\label{eq:22}
s_{i+1}(c,d,\vec e,a_1,b_1,\dots,a_{i},b_{i})\stackrel{\th}{\equiv}  \ b_{i+1}
 \stackrel{\;\ths}{\equiv}t_{i+1}(c,d,\vec e,a_1,b_1,\dots,a_{i},b_{i}). 
\end{equation*}
This selection conforms scheme of Corollary~\ref{l:recursion_bes} (with
$\phi:=\th$ and $\phis:=\ths$) and satisfies the matrix of $\Phi_1$, as it
was seen in the  proof of Lemma~\ref{l:def_q_centro}. Hence we have, respectively
\begin{equation}\label{eq:20}
\begin{split}
L_\alpha(c,d,\vec e,a_1,b_1,\dots,a_{n},b_{n}) &\stackrel{\th}{\equiv}
L_\alpha(\rho(c,d,\vec e,a_1,b_1,\dots,a_{n},b_{n}))\\
R_\alpha(c,d,\vec e,a_1,b_1,\dots,a_{n},b_{n}) &\stackrel{\ths}{\equiv}
R_\alpha(\rho(c,d,\vec e,a_1,b_1,\dots,a_{n},b_{n}))
\end{split}
\end{equation}
and
\begin{equation*}
A\models \Bigl(\bigwedge_{m=1}^{N} \Psi_m\Bigr) (c,d,\vec
e,a_1,b_1,\dots,a_n,b_n).
\end{equation*}
From an easy inspection of the form of $\Psi_m$, it can be deduced
that
\begin{equation}\label{eq:long_1}
A\models \bigwedge_{j=1}^{N} L_j(c,d,\vec e,a_1,b_1,\dots,a_{n},b_{n}) = R_j(c,d,\vec e,a_1,b_1,\dots,a_{n},b_{n}),  
\end{equation}
Therefore,
\begin{align*}
c & =L_\varepsilon(c,d,\vec e,a_1,b_1,\dots,a_{n},b_{n}) && \text{by
  identities~(\ref{eq:1})} \\
&\stackrel{\th}{\equiv} L_\varepsilon(\rho(c,d,\vec e,a_1,b_1,\dots,a_{n},b_{n})) && \text{by
  equations~(\ref{eq:20})}\\ 
& = L_1(\rho(c,d,\vec e,a_1,b_1,\dots,a_{n},b_{n})) &&
\text{by identity~(\ref{eq:dfc_par_phi_1}), with $\alpha=\varepsilon$} \\
&\stackrel{\th}{\equiv} L_1(c,d,\vec e,a_1,b_1,\dots,a_{n},b_{n}) && \text{by  equations~(\ref{eq:20})}\\ 
 & = R_1(c,d,\vec e,a_1,b_1,\dots,a_{n},b_{n}) && \text{by (\ref{eq:long_1})}  \\
&\stackrel{\th}{\equiv}  R_1(\rho(c,d,\vec e,a_1,b_1,\dots,a_{n},b_{n})) && \text{by
  equations~(\ref{eq:20})}\\  
&= L_2(\rho(c,d,\vec e,a_1,b_1,\dots,a_{n},b_{n})) && \text{by identities~(\ref{eq:dfc_par_phi})} \\
&\stackrel{\th}{\equiv} \ \cdots && \text{using
  equations~(\ref{eq:dfc_par_phi}), (\ref{eq:long_1})} \\
&\stackrel{\th}{\equiv} \ \cdots && \text{and iterating\dots}\\
& = R_N(\rho(c,d,\vec e,a_1,b_1,\dots,a_{n},b_{n})) && \text{and using
equation~(\ref{eq:dfc_par_phi_k}):}\\
& = R_{\varepsilon}(\rho(c,d,\vec e,a_1,b_1,\dots,a_{n},b_{n})) \\
&\stackrel{\th}{\equiv} R_\varepsilon(c,d,\vec e,a_1,b_1,\dots,a_{n},b_{n}) && \text{by
  equations~(\ref{eq:20})}\\ 
& = d && \text{by identities~(\ref{eq:1})}
\end{align*} 
Hence $c \stackrel{\th}{\equiv} d$.
\end{proof}

\section{Central Elements in a Variety with DFC}\label{sec:centr-elem-vari}
In the Appendix
,  we will prove a
preservation result (Theorem~\ref{t:preserv_ker}) that implies that formula
$\Phi_1 \y \Phi_2$ of Theorem~\ref{th:def_kernel_dfc} is preserved by
taking direct 
factors and direct products  (taking $\tau_\alpha$ to be
``$L_\alpha(\vec X)=R_\alpha(\vec X)$''). Call $\Phi$ the conjunction $\Phi_1 \y \Phi_2$.
\begin{lemma}\label{l:preserv_Z}
Assume $\V$ has the Determining Property. Then there is a set $\Sigma$ of first order formulas
such that for every $A\in\V$, $\vec e,\vec f \in A^l$ we have
that $\vec e$ and $\vec f$ are complementary 
central elements if and only if $A\models
\zeta(\vec e,\vec f)$ for every $\zeta\in\Sigma$. Moreover, each formula in
$\Sigma$ is preserved by taking direct factors.
\end{lemma}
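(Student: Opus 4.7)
The plan is to use the formula $\Phi = \Phi_1 \y \Phi_2$ of Theorem~\ref{th:def_kernel_dfc}, which, whenever $\vec e$ is a central element, defines the associated factor congruence. By the Determining Property, complementary central pairs $\vec e, \vec f \in A^l$ correspond bijectively to pairs $\th \romb \ths = \0$ of complementary factor congruences satisfying $\vec 0\,\th\,\vec e\,\ths\,\vec 1$ and $\vec 1\,\th\,\vec f\,\ths\,\vec 0$. Applying Theorem~\ref{th:def_kernel_dfc} to each of these pairs identifies $\th = \Phi^A(\cdot,\cdot,\vec e)$ and (symmetrically) $\ths = \Phi^A(\cdot,\cdot,\vec f)$. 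So my task will reduce to expressing in terms of $\Phi$ that ``$\Phi(\cdot,\cdot,\vec z)$ and $\Phi(\cdot,\cdot,\vec w)$ are complementary factor congruences of $A$'' and that ``$\vec z, \vec w$ sit in the correct $\Phi$-positions relative to $\vec 0, \vec 1$.''

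Writing $\Phi(\vec u, \vec v, \vec z)$ for $\bigwedge_{i=1}^{l} \Phi(u_i, v_i, \vec z)$, I would take $\Sigma$ to be the finite set of formulas (in the free variables $\vec z, \vec w$) containing: \emph{(i)} the four atomic position formulas $\Phi(\vec 0, \vec z, \vec z)$, $\Phi(\vec z, \vec 1, \vec w)$, $\Phi(\vec 0, \vec w, \vec w)$, $\Phi(\vec w, \vec 1, \vec z)$; \emph{(ii)} the reflexivity formula $\rho(\vec z, \vec w) := \forall x\,(\Phi(x,x,\vec z) \y \Phi(x,x,\vec w))$; \emph{(iii)} for each of the formulas expressing symmetry, transitivity, and compatibility of $\Phi(\cdot,\cdot,\vec z)$ and $\Phi(\cdot,\cdot,\vec w)$ with every fundamental operation of $\V$, as well as the diagonal-intersection condition $\forall x,y\,(\Phi(x,y,\vec z) \y \Phi(x,y,\vec w) \limp x = y)$, the conjunction of that formula with $\rho$; and \emph{(iv)} the existential $\forall x, y\,\exists u\, (\Phi(x, u, \vec z) \y \Phi(u, y, \vec w))$ expressing $\Phi(\cdot,\cdot,\vec z) \circ \Phi(\cdot,\cdot,\vec w) = \nabla$. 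The $(\Rightarrow)$ direction of the biconditional will then be immediate from Theorem~\ref{th:def_kernel_dfc}, and $(\Leftarrow)$ follows since the conditions (i)--(iv) jointly force $\Phi^A(\cdot,\cdot,\vec e)$ and $\Phi^A(\cdot,\cdot,\vec f)$ to be complementary factor congruences putting $\vec e, \vec f$ in the positions required by the Determining Property.

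The substantive work will be preservation of each $\zeta \in \Sigma$ by direct factors. The main tool is the appendix's preservation theorem for $\Phi$ (Theorem~\ref{t:preserv_ker}), which gives the biconditional $\Phi^{B \times C}(\<b,c\>,\<b',c'\>,\vec e) \sii \Phi^B(b,b',\pi_B(\vec e)) \y \Phi^C(c,c',\pi_C(\vec e))$ for every $\vec e$, where $\pi_B$ and $\pi_C$ are the coordinate projections. The atomic formulas of~(i), the formula $\rho$, and any $\forall$ of conjunctions of $\Phi$-atoms are preserved directly by this biconditional. For the existential~(iv), given $b, b' \in B$ one picks any $c \in C$, applies the $A$-hypothesis at $(\<b,c\>, \<b',c\>)$, and projects the witness to $B$. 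The hard part will be the formulas in~(iii), all of which carry an implication $\limp$: here the conjunction with $\rho$ is essential. Preservation of $\rho$ to $C$ provides $\Phi^C(c, c, \pi_C(\vec e)) \y \Phi^C(c, c, \pi_C(\vec f))$ for every $c \in C$, which is exactly what is needed in order to lift an element $b \in B$ to $\<b, c\> \in A$ via the biconditional, apply the $A$-implication there, and then project the conclusion back to $B$. This ``reflexivity bootstrap'' to handle $\limp$-formulas is the main technical obstacle I foresee, and is the reason $\rho$ is built into every member of $\Sigma$ containing an implication.
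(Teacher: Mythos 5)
Your proposal is correct and follows essentially the same route as the paper: the same $\Phi$-based axiom set (canonical positions, relational product, intersection, reflexivity, symmetry, transitivity, compatibility with the operations), the same appeal to Theorem~\ref{t:preserv_ker}, and the same reflexivity bootstrap for pushing the implicational axioms down to a direct factor. The only divergence is cosmetic: the paper rewrites $SYM$, $TRANS$ and $PRES_F$ so that their conclusions are variable equalities (mixing $\vec e$ and $\vec f$ in the hypotheses) and then invokes the single uniform principle that $\forall \vec{x}\,(\tau \rightarrow x_i = x_j) \wedge \exists \vec{x}\,\tau$ is preserved by direct factors when $\tau$ is preserved by direct factors and products, whereas you keep the natural formulations with $\Phi$-atom conclusions and conjoin reflexivity to each implication explicitly.
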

\begin{proof}
The
following formulas in the language of $\V$ will assert the properties
needed to force $\Phi(\cdot,\cdot,\vec e)$ and  $\Phi(\cdot,\cdot,\vec f)$ to
define the pair of complementary factor congruences associated with
$\vec e$ and $\vec f$.

\begin{itemize}
\item $CAN(\vec e,\vec f)= \bigwedge_{i=1}^l \Phi(0_i,e_i,\vec e) \y \bigwedge_{i=1}^l \Phi(1_i,f_i,\vec e)$

This sentence says that  $\vec e$ it is related to $\vec 0$ and $\vec f$ to
$\vec 1$ via $\Phi(\cdot,\cdot,\vec e)$.
\item $PROD(\vec e,\vec f)=\forall x,y\exists z\ 
\Bigl(\Phi(x,z,\vec e)\wedge \Phi(z,y,\vec f)\Bigr)$

The relational product of  $\Phi(\cdot,\cdot,\vec e)$ and
$\Phi(\cdot,\cdot,\vec f)$ is the universal congruence.
\item $INT(\vec e,\vec f)=\forall x,y\ 
\Bigl(\Phi(x,y,\vec e)\wedge \Phi(x,y,\vec f)\rightarrow x=y\Bigr)$

Their intersection is $\Delta$.
\item$REF(\vec e,\vec f)=\forall x\ \Phi(x,x,\vec e)$

$\Phi(\cdot,\cdot,\vec e)$ is reflexive.
\item $SYM(\vec e,\vec f)=\forall x,y,z\ \Bigl(\Phi(x,y, \vec e)\wedge \Phi(y,z,\vec e)\wedge 
\Phi(z,x,\vec f) \rightarrow z=x\Bigr)$
\item $TRANS(\vec e,\vec f)=\forall x,y,z,u
\;\Bigl(\Phi(x,y,\vec e)\wedge 
\Phi(y,z,\vec e)\wedge \Phi(x,u,\vec e)\wedge \Phi(u,z,\vec f)\rightarrow u=z\Bigr)$

The reader may verify that these two sentences (in conjunction with
the previous ones) say that  $\Phi(\cdot,\cdot,\vec e)$ is symmetric and transitive.

\item For each $m$-ary function symbol $F,$ define:
\begin{multline*}
PRES_{F}(\vec e,\vec f)=\forall u_{1},v_{1},\dots,u_{m},v_{m} \\ 
\Bigl(\bigwedge_{j}\Phi(u_{j},v_{j},\vec e)\Bigr)\;\wedge 
\Phi(F(u_{1},\dots,u_{m}),z,\vec e)\wedge 
\Phi(z,F(v_{1},\dots,v_{m}),\vec f)\rightarrow\\ 
\rightarrow z=F(v_{1},\dots,v_{m})
\end{multline*}
These sentences ensure  $\Phi(\cdot,\cdot,\vec e)$ is preserved by the
basic operations of $\V$.
\end{itemize}
\medskip

Finally, define $CAN\qua$, $REF\qua,$ $SYM\qua,$ $TRANS\qua$ and $PRES_{F}\qua$ to 
be
the result of interchanging $\vec e$ with $\vec f$ in $CAN,$ $REF,$ $SYM,$ 
$TRANS$
and $PRES_{F},$ respectively, and let $\Sigma $ be the union of the
following two sets
\[\{CAN, PROD,INT,
REF,SYM,TRANS,CAN\qua,REF\qua,SYM\qua,TRANS\qua\},\]
\[\{PRES_{F},PRES_{F}\qua : F\text{ a function symbol}\}.\]

Now it is immediate to check that  $\vec e$ and $\vec f$ are complementary
central elements if they satisfy
all formulas in $\Sigma$. To see the converse, note that  if $\vec e$ and
$\vec f$ are complementary central elements,
there is an isomorphism $A\func A_0\times A_1$ such that $\vec e, \vec
f$ correspond to $[\vec 0,\vec 1], [\vec 1,\vec 0]$, respectively, and
Theorem~\ref{th:def_kernel_dfc} guarantees 
that $\Sigma$ will hold.

To see that $\Sigma$ is preserved by direct factors, we first note
that each one of  $CAN$, $CAN'$
$PROD$, $PROD\qua$, $REF$ and $REF\qua$ is obtained by  quantification of a formula preserved
by taking direct factors by Theorem~\ref{t:preserv_ker}. In 
second place,  the remainder of axioms in $\Sigma $ are of the form 
$\forall \vec{x}\;(\tau (\vec e,\vec f,\vec{x})\rightarrow x_{i}=x_{j})$ 
with $REF(\vec e,\vec f) \y REF\qua(\vec e,\vec f) \rightarrow \exists \vec{x}\;\tau 
(\vec e,\vec f,\vec{x})$ universally valid, and since $\forall \vec{x}\;(\tau 
(\vec e,\vec f,\vec{x})\rightarrow x_{i}=x_{j})\wedge \exists
\vec{x}\;\tau (\vec e,\vec f,\vec{x})$
is preserved by taking direct factors (whenever $\tau (\vec e,\vec f,\vec{x})$
is preserved by taking direct factors and direct products,) we have the result. 
\end{proof}
\begin{corollary}\label{c:preserv_Z}
Assume $\V$ has the Determining Property. Then if $[\vec e_0, \vec e_1]$ is a central
element of $A_0\times  A_1$, then $\vec e_i$ is a central element of
$A_i$, $i=0,1$. 
\end{corollary}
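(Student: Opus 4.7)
The plan is to invoke Lemma~\ref{l:preserv_Z} directly. By definition of central element, if $[\vec e_0,\vec e_1]$ is central in $A_0\times A_1$, then there is an isomorphism $A_0\times A_1 \to B_1\times B_2$ sending $[\vec e_0,\vec e_1]$ to $[\vec 0,\vec 1]$; pulling back $[\vec 1,\vec 0]$ along this isomorphism yields an element $[\vec f_0,\vec f_1]\in (A_0\times A_1)^l$ such that $[\vec e_0,\vec e_1]$ and $[\vec f_0,\vec f_1]$ are \emph{complementary} central elements.

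By Lemma~\ref{l:preserv_Z}, complementarity is captured by a set $\Sigma$ of first order formulas, so
\[ A_0\times A_1 \models \zeta\bigl([\vec e_0,\vec e_1],[\vec f_0,\vec f_1]\bigr) \qquad \text{for every } \zeta\in\Sigma. \]
Now I use the second clause of Lemma~\ref{l:preserv_Z}: each $\zeta\in\Sigma$ is preserved by taking direct factors. Applying this to the coordinate projections $\pi_i \colon A_0\times A_1 \to A_i$, and observing that $\pi_i([\vec e_0,\vec e_1]) = \vec e_i$ and $\pi_i([\vec f_0,\vec f_1]) = \vec f_i$, we obtain
\[ A_i \models \zeta(\vec e_i,\vec f_i) \qquad \text{for every } \zeta\in\Sigma, \ i=0,1. \]

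Finally, the converse direction of Lemma~\ref{l:preserv_Z} tells us that satisfying every formula of $\Sigma$ is sufficient to guarantee that $\vec e_i$ and $\vec f_i$ are complementary central elements of $A_i$. In particular $\vec e_i$ is a central element of $A_i$, for $i=0,1$, as desired.

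There is essentially no obstacle here since the technical work has been absorbed into Lemma~\ref{l:preserv_Z}; the only thing to check is the trivial fact that projecting coordinatewise sends $[\vec e_0,\vec e_1]$ to $\vec e_i$, so that the ``preservation by direct factors'' clause of the lemma can be applied with the correct witnesses $\vec e_i,\vec f_i$. The nontrivial content —namely, the first order axiomatization $\Sigma$ of complementary central pairs and its preservation properties— is already supplied.
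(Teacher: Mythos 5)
Your proposal is correct and is exactly the argument the paper intends: the paper's proof is simply ``Immediate by the previous lemma,'' and your write-up supplies the routine details (producing the complementary central element $[\vec f_0,\vec f_1]$, applying the preservation-by-direct-factors clause of Lemma~\ref{l:preserv_Z} to the projections, and then invoking the converse direction of that lemma).
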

\begin{proof}
Immediate by the previous lemma.
\end{proof}
We use $\V_{DI}$ to denote the class of directly indecomposable
members of $\V$. 
\begin{corollary}\label{co:def_DI}
If $\V$ is a variety over a finite language and it has the Determining Property then
$\V_{DI}$ is axiomatizable by a set of first order sentences.
\end{corollary}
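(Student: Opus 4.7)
The plan is to reduce direct indecomposability to a statement about central elements, then package this as a single first-order sentence to be added to the axioms of $\V$. Recall that by the Determining Property, complementary factor congruences of $A$ are in bijection with pairs of complementary central elements of $A$. Thus $A\in\V$ is directly indecomposable if and only if the only pairs of complementary central elements of $A$ are the two trivial ones, $(\vec 0,\vec 1)$ and $(\vec 1,\vec 0)$ (with both pairs collapsing to $\vec 0=\vec 1$ in the trivial case).

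First I would invoke Lemma~\ref{l:preserv_Z} to obtain the set $\Sigma$ of formulas $\zeta(\vec e,\vec f)$ characterizing complementary central pairs. The crucial observation here is that $\Sigma$ is \emph{finite}: inspection of its definition shows it consists of $CAN, PROD, INT, REF, SYM, TRANS$ (and their primed variants), together with one formula $PRES_F$ and $PRES_F'$ per function symbol $F$; since the language is finite, so is $\Sigma$. Hence we may form the single formula
\[
  \psi(\vec e,\vec f) \ := \ \bigwedge_{\zeta\in\Sigma}\zeta(\vec e,\vec f).
\]

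Next I would let $\mathrm{Ax}(\V)$ be any (equational) set of first-order axioms for $\V$, and propose as axiomatization of $\V_{DI}$ the set
\[
  \mathrm{Ax}(\V) \ \cup\ \Bigl\{\forall \vec e\,\forall \vec f\,\Bigl(\psi(\vec e,\vec f)\;\limp\;\bigl(\vec e=\vec 0 \y \vec f=\vec 1\bigr)\o\bigl(\vec e=\vec 1\y \vec f=\vec 0\bigr)\Bigr)\Bigr\}.
\]
Soundness and completeness of this axiomatization follow directly from Lemma~\ref{l:preserv_Z}: if $A\in\V_{DI}$, then any $(\vec e,\vec f)$ satisfying $\psi$ is a complementary central pair, so by direct indecomposability it must correspond to one of the two trivial complementary factor congruences $(\0,\nabla)$ or $(\nabla,\0)$, i.e.\ $(\vec 0,\vec 1)$ or $(\vec 1,\vec 0)$; conversely, if $A\in\V$ satisfies the sentence and $A\cong A_1\times A_2$ is a decomposition, then $[\vec 0,\vec 1]$ and $[\vec 1,\vec 0]$ are complementary central elements which must coincide with the trivial pair, forcing one factor to be trivial.

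The only point that requires some care is the finiteness of $\Sigma$ — this is precisely where the hypothesis of a finite language is used, and without it the conjunction $\psi$ would not be a first-order formula, so the above single sentence would have to be replaced by an infinite schema (which in fact still works, but no longer yields the cleanest statement). Everything else is bookkeeping: the trivial algebra satisfies the added sentence vacuously since $\vec 0=\vec 1$, and the whole argument rests on already-established machinery, so no new technical obstacle arises.
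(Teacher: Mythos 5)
Your proposal is correct and follows essentially the same route as the paper: both exploit the finiteness of $\Sigma(\vec e,\vec f)$ from Lemma~\ref{l:preserv_Z} over a finite language and adjoin to the axioms of $\V$ the single sentence forcing every complementary central pair to be trivial. The only (cosmetic) difference is that the paper also conjoins $\vec 0\neq\vec 1$ so as to exclude the trivial algebra from $\V_{DI}$, whereas you let it satisfy the sentence vacuously.
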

\begin{proof}
The set $\Sigma= \Sigma(\vec e,\vec f)$ in Lemma~\ref{l:preserv_Z} is finite if
the language is finite. Hence
\[\vec 0 \neq \vec 1 \ \y \ \forall \vec e, \vec f \ \; \bigwedge
\Sigma(\vec e,\vec f) \ \impl\; \bigl((\vec e =
\vec 0 \y \vec f= \vec 1) \o
(\vec e = \vec 1 \y \vec f=\vec 0)\bigr)\]
together with axioms for $\V$ defines $\V_{DI}$.
\end{proof}

\begin{lemma}\label{l:DPimplBFC}
The Determining Property implies BFC.
\end{lemma}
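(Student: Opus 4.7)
The plan is to use the Determining Property to express a second factor congruence, relative to the direct product decomposition induced by a first one, as a ``product congruence''; this refines any two pairs of complementary factor congruences into a common four-fold direct product decomposition of $A$, in which BFC becomes a routine coordinatewise computation.

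First I would fix $A \in \V$ and two pairs of complementary factor congruences $(\th,\ths)$ and $(\phi,\phis)$, with unique associated central elements $\vec e, \vec f$ supplied by the Determining Property. Choose an isomorphism $\mu : A \to A_0 \times A_1$ realizing the first decomposition, so $\mu(\vec e) = [\vec 0,\vec 1]$, and write $\mu(\vec f) = [\vec f^{\,0}, \vec f^{\,1}]$. By Corollary~\ref{c:preserv_Z} each $\vec f^{\,i}$ is central in $A_i$; let $\psi_i$ be the factor congruence of $A_i$ defined from $\vec f^{\,i}$ by the formula $\Phi := \Phi_1 \y \Phi_2$ of Theorem~\ref{th:def_kernel_dfc}.

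The crux of the argument is showing $\mu(\phi) = \psi_0 \times \psi_1$. On $A$, $\phi$ is cut out by $\Phi(\cdot,\cdot,\vec f)$; transported through $\mu$, $\mu(\phi)$ is cut out on $A_0 \times A_1$ by $\Phi(\cdot,\cdot,[\vec f^{\,0},\vec f^{\,1}])$. Since $\Phi$ is preserved by direct products and direct factors (the preservation theorem of the Appendix, applied to the atomic equations $L_\alpha(\vec X) = R_\alpha(\vec X)$), its truth on a pair $(a,b)$ in $A_0 \times A_1$ is the conjunction of its truths on $(a^0,b^0)$ in $A_0$ and on $(a^1,b^1)$ in $A_1$; this yields $\mu(\phi) = \psi_0 \times \psi_1$. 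Splitting each $A_i$ further via $\psi_i$ as $A_i \cong A_{i,0} \times A_{i,1}$ gives a common four-fold decomposition $A \cong A_{0,0} \times A_{0,1} \times A_{1,0} \times A_{1,1}$ in which both $\th$ and $\phi$ appear as products of copies of $\0$ and $\nabla$ on the appropriate coordinates.

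With that presentation, the meet, join and relational product of $\th$ and $\phi$ are computed coordinatewise, lie again in the product sublattice, and are therefore factor congruences; permutability and the distributive-lattice identities fall out at once. For BFC with three factor congruences, one iterates the argument to obtain a common eight-fold refinement inside which distributivity is automatic. The main obstacle is the identification $\mu(\phi) = \psi_0 \times \psi_1$: this pivots on the bidirectional preservation of $\Phi$ (the content the Appendix must deliver) together with the uniqueness clause of the Determining Property that pins $\phi$ to $\Phi(\cdot,\cdot,\vec f)$. Once that identification is in hand, the rest is elementary bookkeeping inside a product congruence lattice.
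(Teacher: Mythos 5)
Your argument is correct, but it takes a different route from the paper's. The paper does not build a common refinement at all: it invokes the criterion of Bigelow and Burris~\cite{1}, by which BFC follows once one checks the single containment
\[\{\<(a,b),(c,b)\>: b\in A_1 \text{ and } \exists a',c'\ \<a,a'\>\,\th\,\<c,c'\>\}\subseteq\th\]
for every factor congruence $\th$ of $A=A_0\times A_1$; this is verified in three lines by writing $\th$ as $\Phi(\cdot,\cdot,[\vec e_0,\vec e_1])$, projecting to $A_0$ via preservation by direct factors, using Corollary~\ref{c:preserv_Z} to get $A_1\models\Phi(b,b,\vec e_1)$, and reassembling via preservation by direct products. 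You instead prove the strict refinement property directly: the identification $\mu(\phi)=\psi_0\times\psi_1$ is exactly the bidirectional form of the same preservation statement (the paper's containment is one half of it), and from there you derive closure under meet, join and distributivity by coordinatewise computation in a common $2^k$-fold decomposition. Both proofs rest on the identical technical core --- Theorem~\ref{th:def_kernel_dfc} pinning $\phi$ to $\Phi(\cdot,\cdot,\vec f)$, Corollary~\ref{c:preserv_Z} for the centrality of the coordinates of $\vec f$, and Theorem~\ref{t:preserv_ker} for the two-way preservation of $\Phi$. What the paper's route buys is brevity, at the cost of citing the Chang--J\'onsson--Tarski/Bigelow--Burris equivalence as a black box; your route is longer but self-contained and yields the common-refinement statement explicitly. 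Two small points to make rigorous in a full write-up: justify that $\psi_i:=\Phi^{A_i}(\cdot,\cdot,\vec f^{\,i})$ really is the factor congruence paired with the central element $\vec f^{\,i}$ (combine Corollary~\ref{c:preserv_Z} with Theorem~\ref{th:def_kernel_dfc} applied inside $A_i$), and note that the binary preservation statements of the Appendix extend to finite products, which you need for the eight-fold refinement handling distributivity of triples.
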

\begin{proof}
By Bigelow and Burris~\cite{1}, we only need to check that if $A =
A_0\times A_1$, and $\th$ is a factor congruence on $A$, then 
\[\{\<(a,b),(c,b)\>: b\in A_1 \text{ and } \exists a',c' \<a,a'\>
\,\th\, \<c,c'\>\} \subseteq \th.\]
Let $\vec e=[\vec e_0, \vec e_1]$ be the central element associated to $\th$, so
``$x\,\th\,y$'' is 
defined by $\Phi(x,y,\vec e)$. We have 
\[ \<a,a'\> \,\th\, \<c,c'\>  \text{ iff }A_0\times A_1 \models \Phi
(\<a,a'\> , \<c,c'\>,[\vec e_0, \vec e_1]). \]
By Theorem~\ref{t:preserv_ker}, this implies 
\[A_0 \models \Phi(a,c, \vec e_0).\]
Now Corollary~\ref{c:preserv_Z} ensures $\vec e_1$ is central in $A_1$, and
 hence $A_1 \models \Phi(b,b, \vec e_1)$. Since $\Phi$ is
 preserved by direct products, we obtain
\[A_0\times A_1 \models \Phi
(\<a,b\> , \<c,b\>,[\vec e_0, \vec e_1]),\]
and then $\<a,b\>\,\th\, \<c,b \>$. 
\end{proof}

\section{The Main Theorem}\label{sec:main-theorem}

\begin{proof}[Proof of Theorem~\ref{th:principal}]
(5)$\ent$(2) Suppose we have a pair
of complementary factor congruences $\phi$ and $\phis$ such that  $
\vec 0
\,\phi\, \vec e \,\phis \,\vec 1$. Suppose now that also
$\th\romb\ths =\0$ and $ \vec 0
\,\th\, \vec e \,\ths \,\vec  1$. Then $\vec 0 \stackrel{\phi
  \circ \ths }{\equiv}\vec 1$ and hence
$\phi \vee \ths =\nabla$. So we have
\[(\phi \vee \ths ) \cap \th = \th\]
By BFC, we obtain $\phi\cap\th = \th$ and then $\phi \subseteq
\th$. By symmetry, we obtain $\phi = \th$ and $\phis = \ths$.

(2)$\ent$(3) Theorem~\ref{th:def_kernel_dfc}.

(3)$\ent$(4) Obvious.

(4)$\ent$(1) Immediate.

(2)$\ent$(5) Lemma~\ref{l:DPimplBFC}.

(1)$\ent$(5) Define  $\til 0_{i}$ and $\til 1_{i}$ with
   $i=1,\dots, 2l$ in the following way:

\[(\til 0_{1},\dots, \til 0_{2l}) := (0_{1},\dots 0_{l},1_{1},\dots 1_{l}),\]
\[(\til 1_{1},\dots, \til 1_{2l}) := (1_{1},\dots 1_{l},0_{1},\dots,
   0_{l}).\]

It can be easily checked (using the Weak Determining Property) that with these $\til 0$'s and
$\til 1$'s we have the Determining Property. Since (2)$\ent$(5), we have our result.
\end{proof}

\section{Examples}\label{sec:examples}

\subsection{$\vec 0$ \& $\vec 1$ does not imply BFC}\label{sec:01-does-not}
 The variety $\V_L$ with language $\{+,*,0,1\}$ given by the following
 set of equations $\Sigma$: 
\begin{align*}
x + 0 &= x \\
x + 1 &= x*1 \\
x * 0 &= 0
\end{align*}
 has $\vec 0$ \& $\vec 1$. Next we will define various algebras in $\V_L$. In the
 first place take $L_\omega := \langle \omega,\maso,\poro,0,1 \rangle$,
where
\begin{align*}
0 \maso 1 &:= 0		&	0 \poro 1 &:= 0 \\
1 \maso 1 &:= 1		&       1 \poro 1 &:= 1\\
x \maso 0 &:= x		&	x \poro 0 &:= 0
\end{align*}
for all $x\in\omega$ and  
\[z \maso y := 2,\qquad z \poro y := 2\]
for all $z,y\in\omega$ not previously considered. For each $n\geq 2$, $L_n$ will denote  the subalgebra  of
$L_\omega$ with universe $n=\{0,1,\dots,n-1\}$. 
Now define $D_n$ to be the subalgebra of $L_2 \times L_\omega$ with
universe $(2 \times n) \cup \{(1,n)\}$.

Define the following subsets of $2 \times \omega$:
\begin{equation*}
\begin{split}
P_0	&:= \{(0,j)  \;|\; 3 \leq j \}\\
P_1	&:=\{(1,j)  \;|\; 3 \leq j \}
\end{split}
\end{equation*}
Then  $2 \times \omega = (2\times 3) \cup P_0 \cup P_1$. Note
that for all $z\in (2 \times \omega) \setminus \{(0,0), (1,0)\}$
and  for all $x,y\in P_1$ we have:
\begin{equation}\label{eq:33}
\begin{split}
x\masd z &= y\masd z \\  
z\masd x &= z\masd y  \\
x\pord z &=y\pord z \\
 z\pord x &=z\pord y. 
\end{split}
\end{equation}
  
\begin{lemma}\label{lem:iso_parc}
Every injective \emph{partial} function $f:D_n \func  (L_2\times L_n)$
which fixes $ (2 \times 3) \cup P_0$ is a partial isomorphism between
$D_n$ and $L_2 \times L_n$. 
\end{lemma}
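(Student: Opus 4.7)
The plan is to verify that $f$ preserves the basic operations $+, *$ and the constants $0, 1$, which is what a partial isomorphism means in this purely algebraic setting. The constants $0=(0,0)$ and $1=(1,1)$ both lie in $2\times 3$ and hence are fixed by $f$, so nothing needs to be checked for them. For the binary operations, I would first observe that the elements of $D_n$ not lying in the fixed set $(2\times 3)\cup P_0$ form the set $Q := P_1 \cap D_n = \{(1,j) : 3\leq j\leq n\}$, and that injectivity of $f$ together with the identity behavior on the fixed set forces $f(Q \cap \mathrm{dom}(f)) \subseteq P_1 \cap (L_2\times L_n) \subseteq P_1$: otherwise $f(x)$ would collide with a fixed point already hit by $f$.

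The preservation statement for a binary operation $F$ and inputs $x, y \in \mathrm{dom}(f)$ with $F^{D_n}(x,y)\in \mathrm{dom}(f)$ then splits into cases. If neither input is in $Q$, both are fixed by $f$; inspection of the operation tables of $L_2$ and $L_\omega$ shows that $F^{D_n}(x,y)$ also lies in $(2\times 3)\cup P_0$, so it too is fixed, and the equation is immediate. If some input lies in $Q$ while neither input equals $(0,0)$ or $(1,0)$, the identity~(\ref{eq:33}) says that the value of $F$ is independent of the choice of $P_1$-entry; since $f$ sends $Q$-elements into $P_1$ and fixes the remaining inputs, we obtain $F^{D_n}(x,y) = F^{L_2\times L_n}(f(x),f(y))$, and a quick check shows that this common value lies in $2\times 3$ (its second coordinate collapses to $2$), hence it is fixed.

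The remaining case, where some input is in $Q$ and another is one of the two exceptional points $(0,0), (1,0)$, falls outside the scope of~(\ref{eq:33}) and must be treated by direct computation. For $(0,0)$ the axioms $x+0=x$ and $x*0=0$ of $\V_L$ cover two of the subcases, and the other two ($0+x$ and $0*x$ with $x\in P_1$) evaluate explicitly to elements of $2\times 3$ by the definitions of $\maso$ and $\poro$. For $(1,0)$ a brief table computation gives $x+(1,0)=x$, $(1,0)+x=(1,2)$, $x*(1,0)=(1,0)$, and $(1,0)*x=(1,2)$ for any $x\in P_1$; the same formulas hold with $x$ replaced by $f(x)\in P_1$, so preservation holds in each subcase.

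The main obstacle is the case analysis for the exceptional points $(0,0)$ and $(1,0)$, which cannot be handled by~(\ref{eq:33}); however, once the operation tables are consulted each subcase is routine. Conceptually, the lemma asserts that the degeneracy of the operations on $P_1$ expressed by~(\ref{eq:33}) is exactly strong enough to promote every injective perturbation of $P_1$ (that fixes the rest) to a partial isomorphism.
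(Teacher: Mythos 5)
Your verification is correct, and the computational core --- splitting into the cases ``both arguments fixed'', ``some argument in $P_1$ but neither argument in $\{(0,0),(1,0)\}$'' (handled by~(\ref{eq:33})), and the exceptional arguments $(0,0),(1,0)$ (handled by direct table lookup) --- is exactly the computation the paper compresses into ``straightforward to see using equations~(\ref{eq:33})''. Where the two arguments genuinely differ is in the packaging. The paper extends $f$ to a permutation $\sigma$ of all of $P_1$ and exhibits a \emph{total} isomorphism $\bar\sigma$ between the subalgebras $(2\times 3)\cup P_0\cup B$ and $(2\times 3)\cup P_0\cup\sigma(B)$ of $L_2\times L_\omega$; since $f$ is a restriction of $\bar\sigma$, it automatically preserves and reflects \emph{all} atomic formulas, i.e.\ equations between arbitrary terms evaluated at domain elements. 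That stronger property is what the Ehrenfeucht game in Theorem~\ref{th:no_es_primer_ord} actually requires, and your opening sentence --- checking $f(F(x,y))=F(f(x),f(y))$ only when $x$, $y$ \emph{and} $F(x,y)$ all happen to lie in $\mathrm{dom}(f)$ --- is a priori weaker, since a term can in principle take values outside the domain. In the present situation the gap closes, and your own computations show why: every product $x*y$ and every sum $x+y$ with second argument outside $2\times\{0\}$ lands in $2\times 3$, while $x+y=x$ when the second argument lies in $2\times\{0\}$; hence every composite term value over $\mathrm{dom}(f)$ is either one of the arguments or an element of $2\times 3$, so $\mathrm{dom}(f)$ is in fact a subuniverse of $D_n$ and your operation-level check upgrades to full atomic preservation. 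You should state this closure observation explicitly (or adopt the paper's global-isomorphism packaging, which gets it for free); as written, the reduction of ``partial isomorphism'' to ``preserves the basic operations when defined'' is asserted rather than justified. Your trade-off is more bookkeeping but a completely self-contained verification; the paper's is a one-line conceptual argument that quietly relies on the same case analysis.
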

\begin{proof}
It's straightforward to see (using equations~(\ref{eq:33})) that if $B\subseteq
P_1$ and $\sigma$ is any permutation 
of $P_1$, then $(2 \times 3) \cup P_0  \cup B$ and $(2 \times 3) \cup
P_0 \cup \sigma(B)$ are  subalgebras of $L_2\times L_\omega$
and 
\[\bar \sigma(x) := \begin{cases} 	x
  & x\in (2 \times 3) \cup P_0\\ 
			\sigma(x)	& x \in B, \end{cases}\]
is an isomorphism between them. 

Since $f$ is a restriction of such an isomorphism $\bar\sigma$, it is a
partial isomorphism.
\end{proof}

Recall that $(\V_L)_{DI}$ is the class of directly indecomposable
members of $\V_L$. 
\begin{lemma}\label{l:muchas_una}
Let $\V$ be a variety. If $\V_{DI}$ is axiomatizable by a set of first
order sentences, then it is finitely axiomatizable relative to $\V$.
\end{lemma}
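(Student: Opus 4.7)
\medskip

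The plan is to argue by compactness, using the standard ultraproduct technique. Assume towards a contradiction that $\V_{DI}$ is axiomatized (relative to $\V$) by some set $\Sigma$ of first order sentences, yet no finite subset of $\Sigma$ suffices. Then for every finite $\Sigma_0 \subseteq \Sigma$ there exists an algebra $A_{\Sigma_0} \in \V$ such that $A_{\Sigma_0} \models \Sigma_0$ but $A_{\Sigma_0} \notin \V_{DI}$, so we can write $A_{\Sigma_0} \cong B_{\Sigma_0} \times C_{\Sigma_0}$ with both $B_{\Sigma_0}$ and $C_{\Sigma_0}$ non-trivial.

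The next step is to choose, on the index set $I$ of finite subsets of $\Sigma$, an ultrafilter $U$ refining the filter generated by the sets $\{\Sigma_0 \in I : \phi \in \Sigma_0\}$ for $\phi \in \Sigma$. Form the ultraproduct $A := \prod_{\Sigma_0 \in I} A_{\Sigma_0}/U$. By \L os's theorem, for each $\phi \in \Sigma$ we have $A \models \phi$, so $A \models \Sigma$; combined with $A \in \V$ (varieties are closed under ultraproducts), this gives $A \in \V_{DI}$.

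On the other hand, the key structural fact is that ultraproducts commute with finite direct products, so
\[
A \;\cong\; \Bigl(\prod_{\Sigma_0} B_{\Sigma_0}/U\Bigr) \times \Bigl(\prod_{\Sigma_0} C_{\Sigma_0}/U\Bigr).
\]
Each factor is non-trivial, since the first order sentence $\exists x\,\exists y\;(x\neq y)$ holds in every $B_{\Sigma_0}$ (resp.\ $C_{\Sigma_0}$) and is therefore preserved to the ultraproduct. Hence $A$ admits a non-trivial direct product decomposition, i.e.\ $A \notin \V_{DI}$, contradicting the previous paragraph. This contradiction proves that some finite $\Sigma_0 \subseteq \Sigma$ must axiomatize $\V_{DI}$ relative to $\V$.

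The only technically delicate point is the identity $\prod (B_i \times C_i)/U \cong (\prod B_i/U) \times (\prod C_i/U)$ together with the fact that being non-trivial is a first order property. Both are standard tools from model theory, so the proof should consist essentially of setting up the ultrafilter correctly and invoking them; no new machinery from the body of the paper is needed.
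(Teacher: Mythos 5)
Your proof is correct and follows essentially the same route as the paper: assume failure of finite axiomatizability, pick for each finite $\Sigma_0\subseteq\Sigma$ a decomposable algebra in $\V$ satisfying $\Sigma_0$, and take an ultraproduct over a suitable ultrafilter to obtain a decomposable algebra satisfying all of $\Sigma$, a contradiction. The paper states this as a brief sketch (resting on the observation that an ultraproduct of directly decomposable algebras is decomposable); you have simply filled in the standard details.
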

\begin{proof}
First note that an ultraproduct of directly decomposable algebras is
again decomposable. Let $\Sigma$ be a set of first order sentences
axiomatizing $\V_{DI}$. By way of contradiction, suppose that $\V_{DI}$ is
not finitely axiomatizable relative to $\V$. Hence, for each finite
$\Sigma_0\subseteq \Sigma$  there exists $A_{\Sigma_0}\in\V
\setminus \V_{DI}$ satisfying $\Sigma_0$. Now it is easy to construct
an ultraproduct $U$ of these decomposable algebras in such a way that
$U$ satisfies $\Sigma$, an absurdity.
\end{proof}
\begin{theorem}\label{th:no_es_primer_ord}
$(\V_L)_{DI}$ is not axiomatizable by a set of first order sentences.
\end{theorem}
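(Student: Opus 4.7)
The plan is to derive a contradiction from the assumption that $(\V_L)_{DI}$ is first-order axiomatizable, since in that case $(\V_L)_{DI}$ would be an elementary class and hence closed under ultraproducts and elementary equivalence. I will exhibit an infinite family of directly indecomposable algebras $D_n$ whose ultraproduct is elementarily equivalent to a directly decomposable algebra, contradicting these closure properties.

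For indecomposability, I restrict attention to the infinite set $S := \{n \geq 2 : 2n+1 \text{ is prime}\}$. For $n \in S$, the algebra $D_n$ has prime cardinality $|D_n| = 2n+1$, so any nontrivial product decomposition $D_n \cong A \times B$ with $|A|, |B| \geq 2$ would yield a composite cardinality. Hence $D_n \in (\V_L)_{DI}$ for every $n \in S$.

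The main step is to show that $D_n$ and $L_2 \times L_n$ satisfy the same first-order sentences of quantifier rank at most $k$ whenever $n > k + 3$. I would play the $k$-round Ehrenfeucht--Fra\"iss\'e game between these two algebras. Duplicator's strategy is to always extend the current partial map identically on $(2 \times 3) \cup P_0$, and to match remaining picks from $P_1 \cap D_n$ (of size $n-2$, including the anomalous element $(1,n)$) injectively with those from $P_1 \cap (L_2 \times L_n)$ (of size $n-3$). By Lemma~\ref{lem:iso_parc}, every such injective partial map fixing $(2 \times 3) \cup P_0$ is a partial isomorphism, so Duplicator's responses are always legal; since the smaller of the two sets has $n-3 \geq k$ free elements, Duplicator can respond to all $k$ rounds.

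To conclude, let $U$ be any non-principal ultrafilter on $\omega$ containing $S$. For every first-order sentence $\phi$, the previous step implies that $\{n : D_n \models \phi\}$ and $\{n : L_2 \times L_n \models \phi\}$ agree on a cofinite subset of $\omega$ and therefore belong to $U$ together or not at all. By \L{}o\'s's theorem, $D := \prod_n D_n/U$ is elementarily equivalent to $L := \prod_n (L_2 \times L_n)/U \cong L_2 \times \prod_n L_n/U$, which is manifestly directly decomposable. Since $D_n \in (\V_L)_{DI}$ for $n \in S \in U$, the assumption that $(\V_L)_{DI}$ is elementary forces $D \in (\V_L)_{DI}$ and hence, by elementary equivalence, $L \in (\V_L)_{DI}$, contradicting the decomposability of $L$. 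The main obstacle will be setting up Duplicator's strategy cleanly via Lemma~\ref{lem:iso_parc} and keeping careful track of cardinalities on each side; the surrounding argument is a routine use of ultraproducts.
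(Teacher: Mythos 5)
Your proof is correct, and its combinatorial core coincides with the paper's: the same Ehrenfeucht--Fra\"iss\'e strategy between $D_n$ and $L_2\times L_n$ justified by Lemma~\ref{lem:iso_parc}, and the same observation that $|D_n|=2n+1$ prime forces $D_n\in(\V_L)_{DI}$. Where you diverge is in the endgame. The paper first shows that no \emph{single} sentence can define $(\V_L)_{DI}$ relative to $\V_L$ (any $\phi$ holding on all of $(\V_L)_{DI}$ holds in some $D_n$ with $2n+1$ prime, hence in the decomposable $L_2\times L_n$), and then invokes Lemma~\ref{l:muchas_una}, which uses an ultraproduct-plus-compactness argument to reduce an arbitrary axiomatization to a finite one relative to the variety. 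You instead bypass Lemma~\ref{l:muchas_una} entirely: you fix a non-principal ultrafilter $U$ concentrating on $\{n: 2n+1 \text{ prime}\}$, use the game bound together with \L{}o\'s to get $\prod_n D_n/U \equiv \prod_n (L_2\times L_n)/U \cong L_2\times\prod_n L_n/U$, and derive the contradiction from closure of elementary classes under ultraproducts and elementary equivalence. Both routes ultimately rest on ultraproducts; yours is somewhat more self-contained and direct, while the paper's factorization through Lemma~\ref{l:muchas_una} isolates a reusable general fact (axiomatizability of $\V_{DI}$ implies finite axiomatizability relative to $\V$) that it needs nowhere else here but states as a lemma of independent interest. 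One cosmetic point: $n-3\geq k$ already suffices for the $k$-round game, so your hypothesis $n>k+3$ is slightly stronger than needed, which is harmless.
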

\begin{proof}
We will first prove that player ``\DEFENSE{}'' has a winning strategy
for the back-and-forth (or ``Ehrenfeucht'') game of length $n-3$
played on $D_n$ and $ L_2 \times L_n$. The strategy is as follows:
\begin{itemize}
\item If \ATTACK{} chooses an element in  $(2 \times 3) \cup P_0$ 
(in either algebra), \DEFENSE{}
will choose the same element in the other algebra.
\item If \ATTACK{} chooses an element in $P_1$, \DEFENSE{}
will choose an element in the $P_1$-part of the other
algebra, which has not been 
chosen up to this point.
\end{itemize}
There are $n-3$ elements in $P_1\cap (L_2\times L_n)$, so these
instructions work up to $n-3$ moves. Let's call $g$ the partial
function defined by this game. By
Lemma~\ref{lem:iso_parc}, $g$ is a partial isomorphism and 
we have proved our first claim.

Now suppose $\phi$ is a sentence such that  $(\V_L)_{DI}\models
\phi$. By the above
strategy we have that for every sufficiently large $n$, $D_n \models
\phi$  if and only if $L_2 \times L_n 
\models \phi$. By taking $n$ such that  $2n+1 =$ cardinal of $D_n$ is a prime number, we
obtain $D_n \in (\V_L)_{DI}$. We conclude that there are decomposable
algebras satisfying $\phi$, and hence  $(\V_L)_{DI}$ cannot be defined
by a single first order sentence. Using Lemma~\ref{l:muchas_una} we
have our result.
\end{proof}
\begin{corollary}
$\V_L$ has not DFC.
\end{corollary}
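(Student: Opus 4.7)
The plan is to derive the corollary as a one-step consequence of the machinery already in place. Since $\V_L$ has $\vec 0$ \& $\vec 1$ (with closed terms $0,1$), Theorem~\ref{th:principal} is applicable: DFC is equivalent to the Determining Property for $\V_L$. So it suffices to show that $\V_L$ does not have the Determining Property.

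Suppose for a contradiction that $\V_L$ had the Determining Property. The language $\{+,*,0,1\}$ is finite, so Corollary~\ref{co:def_DI} applies and yields that $(\V_L)_{DI}$ is axiomatizable by a set of first order sentences. But this directly contradicts Theorem~\ref{th:no_es_primer_ord}. Hence $\V_L$ lacks the Determining Property, and therefore lacks DFC.

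There is essentially no obstacle: the corollary is a pure assembly of three results, namely the equivalence (2)$\Leftrightarrow$(3) of the Main Theorem, Corollary~\ref{co:def_DI}, and Theorem~\ref{th:no_es_primer_ord}. The real work (the Ehrenfeucht--Fra\"iss\'e argument on $D_n$ versus $L_2\times L_n$, together with the primality trick ensuring $D_n\in(\V_L)_{DI}$) has already been done in Theorem~\ref{th:no_es_primer_ord}. One only needs to make sure that the hypotheses of Corollary~\ref{co:def_DI} are verified for $\V_L$ --- finiteness of the language and the presence of $\vec 0$ \& $\vec 1$ with closed terms --- both of which are immediate from the definition of $\V_L$ in Section~\ref{sec:01-does-not}.
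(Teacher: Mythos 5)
Your proposal is correct and follows exactly the paper's own argument: the paper's proof likewise invokes the equivalence of DFC with the Determining Property, then combines Corollary~\ref{co:def_DI} with Theorem~\ref{th:no_es_primer_ord} to reach the contradiction. No gaps.
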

\begin{proof}
Since DFC is equivalent to the Determining Property, we can use Corollary~\ref{co:def_DI}
and Theorem~\ref{th:no_es_primer_ord}. 
\end{proof}

Indication that $(\V_L)_{DI}$ might not be definable was discovered
by using the ``Universal Algebra Calculator'' program, designed by
Ralph Freese and Emil Kiss~\cite{UA}.

\subsection{Semilattice Expansions}\label{sec:semil-expans}

Throughout this section, we will suppose that $\V$ is a  variety with $\vec 0$ \& $\vec 1$ for which there
exists a binary term $\vee $ such that for every $A\in  
\mathcal{V}$, $\vee ^{A}$ is a semilattice operation on $A.$ We will
keep the assumption that the
language of $\V$ has at least one constant.

First, we observe that by Lemma~\ref{malsev} together with the
observation that  $(x,y)\in \nabla ^{F}=\Cg^{F}(\vec 0, \vec 1)$
(where  $F\in \mathcal{V}$ is the free algebra freely generated 
by $\{x,y\}$), we obtain $(2+l)$-ary terms $u_{i}(x,y,\vec{z}),$ $i=1,\dots,k,$ such 
that the following identities hold in $\mathcal{V}$

\begin{equation}\label{eq:34}
  \begin{split}
    x&\id u_{1}(x,y,\vec 0)\\
    u_{i}(x,y,\vec{1})&\id u_{i+1}(x,y,\vec 1) \text{ with $i$ odd}\\
    u_{i}(x,y,\vec 0)&\id u_{i+1}(x,y,\vec 0) \text{ with $i$ even}\\
    u_{k}(x,y,\vec{1})&\id y
    \end{split}
\end{equation}
\begin{prop}
The formula
\[\Phi(x,y,\vec z) := \forall u\;\left( \bigwedge_{i=1}^{k} \Bigl(u_{i}(x,y,\vec
0)\vee
u=u_{i}(x,y,\vec z)\vee u \Bigr) \ \ \longrightarrow \ \ \bigl(x\vee u=y\vee u\bigr)\right)\]
satisfies (3) of Theorem~\ref{th:principal} for $\V$. 
\end{prop}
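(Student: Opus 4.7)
The plan is to verify directly both directions of condition~(3) of Theorem~\ref{th:principal} for the given $\Phi$, by a coordinate-wise analysis in the product $A\times B$. First I would unpack $\Phi(\<a,b\>,\<c,d\>,[\vec 0,\vec 1])$. Since $[\vec 0,\vec 1]$ has first block $\vec 0^A$ and second block $\vec 1^B$, for any $u=\<u^0,u^1\>\in A\times B$ the antecedent $u_i(x,y,\vec 0)\vee u = u_i(x,y,\vec z)\vee u$ becomes a tautology $u_i(a,c,\vec 0)\vee u^0 = u_i(a,c,\vec 0)\vee u^0$ in the first coordinate and the nontrivial equation $u_i(b,d,\vec 0)\vee u^1 = u_i(b,d,\vec 1)\vee u^1$ in the second. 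Thus the antecedent constrains only $u^1$, while the consequent $\<a,b\>\vee u = \<c,d\>\vee u$ splits into $a\vee u^0 = c\vee u^0$ and $b\vee u^1 = d\vee u^1$.

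For the implication $a=c \Rightarrow A\times B \models \Phi$, the first-coordinate consequent is automatic, and for the second I would assume the antecedent in the second coordinate and build a telescoping chain using identities~(\ref{eq:34}):
\[
b\vee u^1 = u_1(b,d,\vec 0)\vee u^1 = u_1(b,d,\vec 1)\vee u^1 = u_2(b,d,\vec 1)\vee u^1 = u_2(b,d,\vec 0)\vee u^1 = \cdots = u_k(b,d,\vec 1)\vee u^1 = d\vee u^1,
\]
where each step alternates between an antecedent equation (swapping $\vec 0\leftrightarrow\vec 1$ in the third argument) and one of the Mal'cev-chain identities of~(\ref{eq:34}) (advancing the index by one; note that the parities of~(\ref{eq:34}) ensure $k$ is odd and the chain closes correctly).

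For the converse $a\neq c \Rightarrow A\times B \not\models \Phi$, I would construct a single witness $u$ satisfying the antecedent but falsifying the consequent. Set
\[
u^1 := \bigvee_{i=1}^{k}\bigl(u_i(b,d,\vec 0)\vee u_i(b,d,\vec 1)\bigr)\in B;
\]
by the semilattice laws this $u^1$ is an upper bound of every $u_i(b,d,\vec 0)$ and every $u_i(b,d,\vec 1)$, so all second-coordinate antecedent equations collapse to $u^1 = u^1$ and hold regardless of $u^0$. Then I would choose $u^0\in\{a,c\}$ with $a\vee u^0 \neq c\vee u^0$: if $a\vee c\neq c$ take $u^0:=c$, and otherwise take $u^0:=a$. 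This always succeeds, for if both choices failed we would have $a = c\vee a$ and $c = a\vee c$, which by commutativity of $\vee$ gives $a = c\vee a = a\vee c = c$, against hypothesis. The element $u=\<u^0,u^1\>$ then witnesses $\neg \Phi(\<a,b\>,\<c,d\>,[\vec 0,\vec 1])$.

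The main obstacle I anticipate is the converse direction, and specifically the idea of using the finite semilattice join to produce a single $u^1$ absorbing every value relevant to the antecedent; this is where the semilattice structure (join idempotent, commutative, and associative) is truly essential. The forward direction is, by contrast, a mechanical alternation of the antecedent equations with identities~(\ref{eq:34}), and the existence of a separating $u^0$ is a routine two-line lemma about join-semilattices.
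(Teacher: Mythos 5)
Your proposal is correct and follows essentially the same route as the paper: the forward direction is the same telescoping alternation of the antecedent equations with the identities~(\ref{eq:34}) in the second coordinate, and the converse uses exactly the paper's witness $u^1=\bigvee_{i}\bigl(u_i(b,d,\vec 0)\vee u_i(b,d,\vec 1)\bigr)$ together with $u^0\in\{a,c\}$ (the paper runs both choices directly to get $a\vee c=a$ and $a\vee c=c$, while you contrapose and pick whichever choice separates; this is the same argument). No gaps.
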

\begin{proof}
 Let $A, B\in \V$, and $a\in A$,
$b,d\in B$. First we prove that 
\[A\times B \models \Phi\bigl(\<a,b\>, \<a,d\>, [\vec 0, \vec
  1]\bigr)\]
Suppose that for some $(u, v)$ we have
\[A\times B \models \bigwedge_{i=1}^{k}u_{i}(\<a,b\>, \<a,d\>, [\vec 0, \vec 0])\vee
(u,v)=u_{i}(\<a,b\>, \<a,d\>, [\vec 0, \vec 1])\vee (u,v).\]
Then
\[ B \models\bigwedge_{i=1}^{k}u_{i}(b,d,\vec 0)\vee v = u_{i}(b,d,\vec 1)\vee v.\]
But the above equations in combination with~(\ref{eq:34}) produce
\[b \o v = d \o v\] 
and hence
\[\<a,b\>\vee (u,v)= \<a,d\>\vee (u,v).\]

Now suppose
\[A\times B \models \Phi\bigl(\<a,b\>, \<c,d\>, [\vec 0, \vec
  1]\bigr).\]
The reader can check that considering
$u=(a,\bigvee_{i=1}^{k}u_{i}(b,d,\vec 0)\o u_{i}(b,d,\vec 1))$  it
can be proved $a \o c = a$, and similarly with
$u=(c,\bigvee_{i=1}^{k}u_{i}(b,d,\vec 0)\o u_{i}(b,d,\vec 1))$ and $a \o c =c$, hence $a=c$.
\end{proof}

The following example will show that the complexity of formula $\Phi$ in the above
proposition cannot be improved for the general case.
\begin{prop}
Let $\V^{\o}$ the variety  with language $\{+,*,0,1, \o \}$ defined by the
axioms of $\V_L$ plus identities saying that $\o$ is a semilattice
operation for which $0 \o 1 = 0$. Then there exists neither a positive
nor an existential formula satisfying  (4) of 
Theorem~\ref{th:principal} for $\V^{\o}$.
\end{prop}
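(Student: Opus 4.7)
The plan is to rule out each syntactic class via the classical preservation theorems of model theory: positive formulas are preserved by surjective homomorphisms, while existential formulas are preserved by extensions. For each case, the goal is to exhibit a morphism $\psi \colon A\times B \to A'\times B'$ in $\V^\o$ of the appropriate type (a surjective homomorphism for the positive case, an embedding for the existential case) that fixes both parameter central elements,
\[\psi\bigl([\vec 0,\vec 1]\bigr) = [\vec 0,\vec 1], \qquad \psi\bigl([\vec 1,\vec 0]\bigr) = [\vec 1,\vec 0],\]
but fails to respect ``equality of first coordinates.''

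The key observation driving both reductions is this. Suppose $\Phi$ of the forbidden syntactic form satisfied (4), and let $\psi$ be as above. Pick $\<a,b_1\>, \<a,b_2\>\in A\times B$ sharing their first coordinate $a$. By (4) applied in $A\times B$,
\[A\times B\models \Phi\bigl(\<a,b_1\>, \<a,b_2\>, [\vec 0,\vec 1], [\vec 1,\vec 0]\bigr),\]
the relevant preservation theorem transfers this to $\psi\<a,b_1\>, \psi\<a,b_2\>$ in $A'\times B'$, and (4) applied in $A'\times B'$ forces $\pi_1^{A'}\psi\<a,b_1\> = \pi_1^{A'}\psi\<a,b_2\>$. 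A $\psi$ exhibiting the opposite behaviour therefore contradicts the existence of such a $\Phi$.

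To build the witnesses concretely, I would equip suitable members of $\V_L$ (e.g.\ variants of the $L_n$ of Subsection~\ref{sec:01-does-not}) with semilattice operations $\o$ meeting $0\o 1=0$. For the positive case, the aim is a congruence $\theta$ of $A\times B$ that identifies pairs $\<a,b\>,\<a',b\>$ with $a\neq a'$ while leaving the $[\vec 0,\vec 1]$- and $[\vec 1,\vec 0]$-classes fixed; the quotient must then be re-identified as a direct product in $\V^\o$ in which the two parameters sit as $[\vec 0,\vec 1]$ and $[\vec 1,\vec 0]$. For the existential case, I would look for a ``twisted'' subalgebra of a larger product, so that the ambient direct decomposition reads the first coordinates differently from the intrinsic one of $A\times B$.

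The principal obstacle is the explicit construction and its verification: one must check the $\V_L$-identities together with the semilattice axioms and $0\o 1=0$ for the algebras involved, confirm that the candidate quotient (resp.\ image) admits a direct-product decomposition in $\V^\o$ so that the required central elements $[\vec 0,\vec 1]$ and $[\vec 1,\vec 0]$ are simultaneously meaningful on both sides, and exhibit the desired failure of first-coordinate preservation under $\psi$. A secondary subtlety is that the positive and existential classes are incomparable, so two distinct witness morphisms are in general required, although a single non-surjective embedding could at best handle the existential half.
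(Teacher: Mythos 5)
Your reduction to the two classical preservation theorems is exactly the frame the paper uses, and the key observation (a morphism of the appropriate kind fixing $[\vec 0,\vec 1]$ and $[\vec 1,\vec 0]$ but breaking agreement of first coordinates kills $\Phi$) is sound. But the proposal stops precisely where the proposition's actual content begins: you never produce the witnesses, and you acknowledge as much. For this particular variety the constructions are not routine, and without them nothing is proved. What the paper builds is: total semilattice orders $0>1>2>3>4$ on $L_2$, $L_4$, $L_5$; for the existential half, a subalgebra $L$ of $L_5\times L_2$ omitting $(4,0)$ and $(3,1)$ together with an isomorphism $F\colon L_4\times L_2\to L$ that fixes everything except $(3,1)\mapsto(4,1)$. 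The composite $L_4\times L_2\cong L\subseteq L_5\times L_2$ is an embedding fixing $(0,1)$ and $(1,0)$ that sends the first-coordinate-equal pair $(3,0),(3,1)$ to $(3,0),(4,1)$ -- exactly your ``twisted subalgebra,'' but it has to be exhibited and verified against the $\V_L$-identities.

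For the positive half your literal schema (a surjection onto another genuine product $A'\times B'$) is harder to realize than necessary, and the paper takes a structurally different and easier route: it shows that a positive $\Phi$ forces every factor congruence to be \emph{compact}, namely equal to $\Cg(\vec 0,\vec e)\vee\Cg(\vec 1,\vec f)$, by passing to the quotient $A/\theta$ with $\theta=\Cg(\vec 0,\vec e)\vee\Cg(\vec 1,\vec f)$. In that quotient the parameters collapse to $\vec 0,\vec 1$, and condition (4) can be applied with a \emph{trivial} second factor, so one never needs to re-identify the image as a nontrivial product -- a point your proposal flags as a difficulty but does not resolve. The contradiction is then a single concrete computation: a congruence of $L_5\times L_2$ containing $\Cg(\vec 0,\vec e)\vee\Cg(\vec 1,\vec f)$ but strictly below $\ker\pi_1$, showing $\V^\o$ lacks compact factor congruences. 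So: right strategy, but the proof is missing its counterexamples, and in the positive case the quotient-to-a-trivial-product device is the idea you would still need to find.
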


\begin{proof}
Define join-semilattice operations on $L_2$, $L_4$  and $L_5$ 
such that they are totally ordered with the order given by $0 > 1 > 2
> 3 > 4$.
Suppose that $\Phi$ satisfies (4) of Theorem~\ref{th:principal}, and
consider $L_5\times L_2$. 
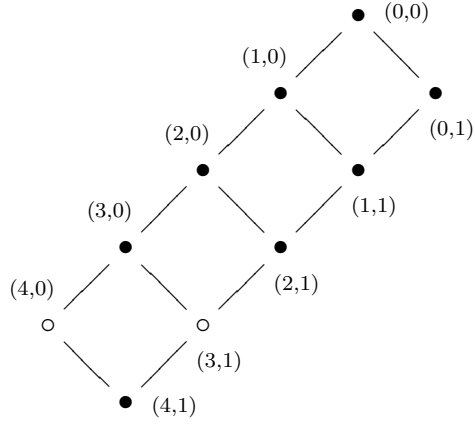
\begin{figure}[h]
\centerline{
\xymatrix@dr{
 {\bullet} \ar@{-}[r]^<{(0,0)}  \ar@{-}[d]_>{(1,0)} &  {\bullet}  \ar@{-}[d]^<{(0,1)} \\
 {\bullet} \ar@{-}[r]  \ar@{-}[d]_>{(2,0)} &  {\bullet}  \ar@{-}[d]^<{(1,1)} \\
 {\bullet} \ar@{-}[r]  \ar@{-}[d]_>{(3,0)} &  {\bullet}  \ar@{-}[d]^<{(2,1)} \\
 {\bullet} \ar@{-}[r]  \ar@{-}[d]_>{(4,0)} &  {\circ}  \ar@{-}[d]^<{(3,1)}^>{(4,1)}   \\
 {\circ} \ar@{-}[r]   &  {\bullet} 
}
}
\caption{The semilattice structure of $L_5\times L_2$.}\label{fig:L}
\end{figure}

The shaded dots in Figure~\ref{fig:L} form a subalgebra of $L_5\times L_2$; call it
$L$. The reader may check that $F: L_4\times L_2 \func L$, where
\[F(x) := \begin{cases} x & x\neq (3,1) \\ (4,1) & x = (3,1)
\end{cases}\]
is an isomorphism. Since 
$\Phi$ witnesses (4) of Theorem~\ref{th:principal} for $\V^{\o}$, we have
\[L_4 \times L_2 \models \Phi((3,0),(3,1),(0,1),(1,0)).\]
Applying $F$ everywhere,
\[L\models \Phi((3,0),(4,1),(0,1),(1,0)).\]
If we had $\Phi$ an existential formula, we  would obtain
\[L_5\times L_2\models \Phi((3,0),(4,1),(0,1),(1,0)),\]
since $L$ is a subalgebra of  $L_5\times
L_2$. We would then  conclude  $3 = 4$, an absurdity.

The fact that  there is no  positive formula $\Phi$ 
is an immediate consequence of the next two claims. We say that
$\V$ has \emph{compact factor congruences} if
every factor congruence of every algebra in $\mathcal{V}$ is compact.

\begin{claim}
$\V^{\o}$ has not compact factor congruences.
\end{claim}
\begin{proof}
If $\V^\o$ had compact factor congruences, there would exist  $\cero_{1}(w),\dots ,\cero_{N}(w)$, $%
\uno_{1}(w),$ $\dots,\uno_{N}(w)$ such that for every algebra $A=A_{1}\times
A_{2}\in \mathcal{V}$, $(\lambda _{1},\lambda _{2})\in A,$%
\begin{align*}
\ker \pi _{1} &=\Cg^{A} \bigl( [\vec{\cero}(\lambda 
_{1}),\vec{\cero}(\lambda
_{2})],[\vec{\cero}(\lambda _{1}),\vec{\uno}(\lambda _{2})]\bigr) \\
\ker \pi _{2} &=\Cg^{A} \bigl( [\vec{\uno}(\lambda 
_{1}),\vec{\uno}(\lambda
_{2})],[\vec{\cero}(\lambda _{1}),\vec{\uno}(\lambda _{2})]\bigr),
\end{align*}
by Lemma 4 in~\cite{9}. Since the language contains constants, we can
replace these new $\cero$'s and $\uno$'s by closed terms, and hence
\begin{align*}
\ker \pi _{1} &=\Cg^{A} \bigl([\vec{\cero},\vec{\cero}],[\vec{\cero},\vec{\uno}]\bigr) \\
\ker \pi _{2} &=\Cg^{A}\bigl( [\vec{\uno},\vec{\uno}],[\vec{\cero},\vec{\uno}]\bigr).
\end{align*}
Now, checking the axioms of $\V^\o$, we conclude that for every closed term
$t$ in the language of $\V^\o$, $t$ is equivalent to $0$ or $t$ is
equivalent to $1$ over $\V^\o$, hence we should have
\[\ker \pi _{1} =\Cg^{A}\bigl((0,0), (0,1)\bigl) \,\o\, \Cg^{A}\bigl((1,0),
(1,1)\bigl).\]
But the reader may check that if  we take
$A=L_5\times L_2$, the equivalence relation depicted in
Figure~\ref{fig:congr} is a congruence that contains the right hand side of the last equality,
and is clearly different from $\ker \pi _{1}$.
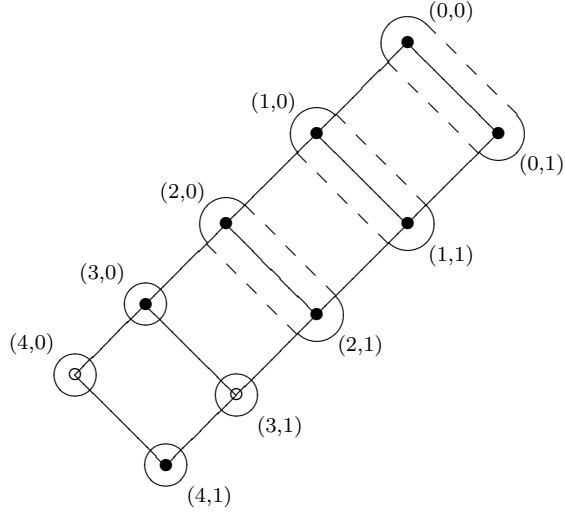
\begin{figure}[h]
\centerline{\xymatrix@dr@=2ex@L=1.5ex{
{\extrNEpi} \bordesup \bordeinf \alcentro{r}^<{(0,0)}  \alcentro{d}_>{(1,0)} &
{\extrSWpi}    \alcentro{d}^<{(0,1)} \\ 
{\extrNEpi} \bordesup \bordeinf \alcentro{r}  \alcentro{d}_>{(2,0)} &
{\extrSWpi}   \alcentro{d}^<{(1,1)} \\ 
{\extrNEpi} \bordesup \bordeinf \alcentro{r}  \alcentro{d}_>{(3,0)} &
{\extrSWpi}  \alcentro{d}^<{(2,1)} \\ 
  \sing  \alcentro{r}  \alcentro{d}_>{(4,0)} &   \singul{\circ}  \alcentro{d}^<{(3,1)}^>{(4,1)} \\
 \singul{\circ} \alcentro{r}   & \sing
}
}
\caption{One congruence in $L_5\times L_2$.}\label{fig:congr}
\end{figure}
\end{proof}
\begin{claim}
  Let $\V$ be a variety with $\vec 0$ \& $\vec 1$. Suppose that $\Phi$ is a positive
  formula that satisfies  (4) of 
  Theorem~\ref{th:principal}. Then    $\V$ has compact factor congruences.
\end{claim}
\begin{proof}
Let $A\in\V$. We will prove that if $\phi \romb
\phis = \0$, $\vec 0
\,\phi\,\vec e\,\phis\,\vec 1$ and $\vec 1\,\phi\,\vec f\,\phis\,\vec 0$, then
$\phi=\Cg(\vec 0,\vec e)
\o \Cg(\vec 1,\vec f)$, and hence is
compact.  Call $\th :=Cg(\vec 0,\vec e)
\o \Cg(\vec 1,\vec f)$. Trivially, $\th\subseteq\phi$. Assume
   $x \,\phi\, y$; by (4) of Theorem~\ref{th:principal}, we obtain
   $A\models \Phi(x,y,\vec e,\vec f)$. Since $\Phi$ is positive, it is preserved
   by homomorphic images and then 
\[A/\th\models
\Phi(x/\th,y/\th, \vec e/\th,\vec f/\th).\]
   Equivalently, 
   \[A/\th\models
   \Phi(x/\th,y/\th,\vec 0/\th,\vec 1/\th),\]
   and we obtain $x/\th=y/\th$. This implies $(x,y) \in \th$, hence
   $\phi\subseteq\th$,  and we have the result.
\end{proof}
\end{proof}

\section{Final Considerations}
We wish to mention that Theorem~\ref{th:principal} is in some sense a
consequence of Beth's definability theorem of first-order logic. This
is because the first evidence that the Determining Property implies
Definable Factor Congruences was obtained by a simple application of
 this theorem. Once we are sure about the concrete
 existence of certain first-order formula\footnote{As it's well known,
 formulas given by Beth's theorem can be effectively found.} we know that all efforts
 dedicated to find it are not sterile. Due to the expressive power of
 terms in algebra, it is common 
 that first-order definitions obtained in this area are more manageable
 that in the general case of model theory, and this fact makes the
 task for searching them a little easier.   

Another tool that was important for our research for this paper but it
was not included in the final presentation is the
Pierce sheaf. Working on this construction, we realized  that there was some sense in
developing a structural study of central elements when factor
congruences are not compact. 

Finally, we are indebted to Ross Willard for the
terms $s_i(\dots)$ and  $t_i(\dots)$, which appear in his
(unpublished) version of a Mal'cev condition for Boolean Factor Congruences.

\appendix
\section*{Appendix: A Preservation Result}\label{sec:preservation-result}
For the rest of the section, $N$ will be an
even natural number.

\begin{lemma}\label{l:preserv_dfc}
For  every word  $\alpha$ in the alphabet $\{1,\dots,N\}$ of
length no greater than $N$, let $\tau_\alpha = \tau_\alpha (x,y,\vec z,x_1,y_1,\dots,x_n,y_n)$ be
a formula preserved by direct products and by taking direct factors. Define:
\[
E_{m}:=\bigwedge_{\begin{subarray}{c}
m\leq\largo{\alpha}\leq N  \\ \largo{\alpha}\text{ even }\end{subarray}}
\ \Bigl(\sideset{}{_{\gamma\neq\varepsilon}}\bigwedge \tau_{\alpha\gamma} \limp
\tau_\alpha \Bigr)\qquad  O_{m}:= \bigwedge_{\begin{subarray}{c}
m\leq\largo{\alpha}\leq N  \\ \largo{\alpha}\text{ odd }\end{subarray}}
\ \Bigl(\sideset{}{_{\gamma\neq\varepsilon}}\bigwedge \tau_{\alpha\gamma} \limp
\tau_\alpha \Bigr).\]
Then,
\begin{enumerate}
\item\label{item:baja_O} For $2\leq m \leq N$, $m$ even, if $\bigl( \exists y_1 \forall x_1
\dots \exists y_n \forall x_n \; E_m\bigr) \y \bigl(\exists x_1 \forall y_1 \dots
\exists x_n \forall y_n\; O_{m+1}\bigr)$ is preserved by direct factors, so is
\[\bigl(\exists y_1 \forall x_1 
\dots \exists y_n\forall x_n \; E_m\bigr) \y \bigl(\exists x_1 \forall y_1 \dots
\exists x_n \forall y_n\; O_{m-1}\bigr).\]
\item\label{item:baja_E}  For $4\leq m \leq N$, $m$ even, if  $\bigl( \exists y_1 \forall x_1
\dots \exists y_n \forall x_n \; E_m\bigr) \y \bigl(\exists x_1 \forall y_1 \dots
\exists x_n \forall y_n\; O_{m-1}\bigr)$
is preserved by direct factors, so is  
\[\bigl(\exists y_1 \forall x_1 
\dots \exists y_n \forall x_n \; E_{m-2}\bigr) \y \bigl(\exists x_1 \forall y_1 \dots
\exists x_n \forall y_n\; O_{m-1}\bigr).\]
\end{enumerate}
\end{lemma}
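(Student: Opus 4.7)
Both parts of the lemma have the same shape: strengthening the matrix of a prenex formula known to be preserved by direct factors, by an extra layer of implications indexed by words of one specific new length and parity, while maintaining preservation by direct factors. I describe the plan for part~(\ref{item:baja_O}); part~(\ref{item:baja_E}) is entirely analogous after exchanging the roles of the two blocks of quantifiers and the parities.

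Abbreviate $\Psi_m := (\exists y_1\forall x_1\cdots\exists y_n\forall x_n)\,E_m$ and $\Omega_k := (\exists x_1\forall y_1\cdots\exists x_n\forall y_n)\,O_k$, and suppose $A\times B\models\Psi_m\wedge\Omega_{m-1}$. Since $O_{m-1}$ logically implies $O_{m+1}$, we also have $A\times B\models\Psi_m\wedge\Omega_{m+1}$, and the preservation hypothesis yields $A\models\Psi_m\wedge\Omega_{m+1}$. The $\Psi_m$-witnesses in $A$ can be reused verbatim, so the task reduces to producing Skolem witnesses for $\Omega_{m-1}$ in $A$. My candidate strategy is the $A$-projection of the $\Omega_{m-1}$-Skolem strategy on $A\times B$, evaluated at tuples of the form $\vec{y}^{A}\oplus\vec{b}$ for some $\vec{b}\in B$ to be chosen. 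The implications already present in $O_{m+1}$ then hold on the projected witnesses by factor preservation of each $\tau_\beta$; only the new implications $\bigwedge_\gamma\tau_{\alpha\gamma}\rightarrow\tau_\alpha$ at length $m-1$ require attention.

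The main obstacle is that an implication of this form is \emph{not} preserved by direct factors in general: when the $A$-antecedent holds but the $B$-antecedent fails, the $A\times B$-implication is vacuous and transmits nothing. The key tool to overcome this is that each $\tau_\beta$ is preserved by both products and factors, so on a product tuple $A\times B\models\tau_\beta(\vec{a}\oplus\vec{b})$ holds precisely when both $A\models\tau_\beta(\vec{a})$ and $B\models\tau_\beta(\vec{b})$ hold. The plan is to rig $\vec{b}$ so that the $B$-side of the antecedent is forced whenever the $A$-side is, via a back-and-forth construction along the quantifier prefix that interleaves the $\Omega_{m+1}$-strategy on $A$ (from the preservation hypothesis) with the $\Omega_{m-1}$-strategy on $A\times B$ (from the assumption), making use of the $\Psi_m$-strategies on both factors as well. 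The implications of $O_{m+1}$, once instantiated on the $B$-factor via factor preservation applied to the $A\times B$-strategy, propagate the required $B$-side constraints; the $A\times B$-implication then yields the consequent on the product, and factor preservation delivers it on $A$. This bookkeeping of auxiliary $B$-side witnesses, staged along the alternation of $\exists$ and $\forall$, is the principal technicality; once in place, the verification of each new implication reduces to a direct invocation of the product decomposition of the $\tau_\beta$'s.
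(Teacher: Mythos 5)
Your proposal follows essentially the same route as the paper's proof: weaken $O_{m-1}$ to $O_{m+1}$ to invoke the preservation hypothesis on both factors, keep the $E_m$-witnesses, define the Skolem witnesses for $O_{m-1}$ in $A$ as the $A$-projections of the product strategy interleaved with the $E_m$-strategy on $B$, and defeat the non-preservation of implications by forcing the $B$-side of each antecedent so that the product-level implication fires and factor preservation returns the consequent to $A$. The ``bookkeeping'' you defer is where all the work lies (the paper's Lemmas~\ref{l:tech_E} and~\ref{l:tech_O}): it is a downward induction on the length of the word $\alpha$, from $N$ down to $m$, which must carry along a second, simultaneous lifting claim --- if $A$ satisfies the antecedent $\bigwedge_{\gamma\neq\varepsilon}\tau_{\alpha\gamma}$ at the projected tuple, then the product satisfies the whole conjunction $\bigwedge_{\mu}\tau_{\alpha\mu}$ --- and one detail of your plan as stated would not go through: the odd-length $B$-side facts cannot be obtained by ``instantiating the implications of $O_{m+1}$ on the $B$-factor,'' since $B$'s own Skolem witnesses for the $\exists x_1\forall y_1\cdots$ prefix need not agree with your interleaved tuple; they come instead from the lifting claim plus factor preservation of each $\tau_\beta$, while on the $B$-factor one only uses the $E_m$ implications, which do hold at the interleaved tuple because the $x_i$-slots of $\exists y_1\forall x_1\cdots E_m$ are universally quantified.
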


Note that every subindex varies over words of length less than or
equal to $N$, so an expression of the form
``$\bigwedge_{\gamma\neq\varepsilon} \tau_{\alpha\gamma}$'' should be
read like ``$\bigwedge \{ \tau_{\alpha\gamma} :\gamma\neq\varepsilon
\text{ and } \largo{\alpha\gamma}\leq N \}$''. Therefore, if
$m\geq N$, $O_{m} = true$ (empty conjunction) and $E_N = 
\bigwedge_{\largo{\beta}=N} \tau_\beta$. Also, recall that the $i$-th
component of an element $a$ in a direct product $\Pi_i A_i$ is called $a^i$.

We will now state and prove two lemmas that will be helpful in order
to prove Lemma~\ref{l:preserv_dfc}. In the following  we will
assume that the tuple $\vec z$ has length equal to 1, since proofs
are exactly the same and this
simplification makes them easier to read.
\begin{lemma}\label{l:tech_E} 
Let $m$ be an even integer, $A_0, A_1\in \V$
and $c,d,e,a_1,\dots,a_{2n}\in A_0\times A_1$ such that $2\leq m\leq
N$, $A_0\times A_1 \models  E_{m} (c,d,e,a_1,\dots,a_{2n})$ and 
$ A_1 \models O_{m+1}(c^1,d^1,e^1,a_1^1,\dots,a_{2n}^1)$. Then $ A_0
\models E_{m}(c^0,d^0,e^0,a_1^0,\dots,a_{2n}^0)$ and if $\alpha$ has
length $m$ then 
\begin{equation}\label{eq:32}
A_0 \models \Bigl(\bigwedge_{\gamma\neq\varepsilon}
\tau_{\alpha\gamma}\Bigr)(c^0,d^0,e^0,a_1^0,\dots,a_{2n}^0) \ \ent \
A_0 \times A_1 \models
\Bigl(\bigwedge_{\mu}\tau_{\alpha\mu}\Bigr)(c,d,e,a_1,\dots,a_{2n}).
\end{equation}
\end{lemma}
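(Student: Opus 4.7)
The plan is to reduce everything to the basic observation that since each $\tau_\alpha$ is preserved by direct products and by direct factors, for every tuple $v$ from $A_0 \times A_1$ we have
\[
A_0 \times A_1 \models \tau_\alpha(v) \ \text{ iff } \ A_0 \models \tau_\alpha(v^0) \text{ and } A_1 \models \tau_\alpha(v^1).
\]
This lets us pass each atomic conjunct freely between $A_0\times A_1$ and its factors, and the entire proof amounts to doing so systematically, guided by the parity of word lengths.

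The core technical step is the following sub-claim, to be proved by reverse induction on $\largo{\beta}$: \emph{fix any even $\alpha$ with $m \leq \largo{\alpha} \leq N$ and assume that $A_0 \models \tau_{\alpha\mu}(c^0,d^0,e^0,a_1^0,\dots,a_{2n}^0)$ for every $\mu \neq \varepsilon$; then for every $\beta$ of the form $\alpha\gamma$ with $\gamma \neq \varepsilon$ and $\largo{\beta} \leq N$, one has $A_1 \models \tau_\beta(c^1,d^1,e^1,a_1^1,\dots,a_{2n}^1)$.} For the base case $\largo{\beta} = N$ there are no longer admissible words, so the $\beta$-implication in $E_m$ has a vacuous antecedent and reduces to $\tau_\beta$ itself, which by hypothesis holds on $A_0 \times A_1$ and transfers to $A_1$ by direct-factor preservation. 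For the inductive step, split on parity. If $\largo{\beta}$ is odd then $\largo{\beta} \geq m+1$, so the $\beta$-implication lives in $O_{m+1}$ and holds on $A_1$; its antecedent is supplied by the induction hypothesis, since each extension $\beta\gamma'$ with $\gamma'\neq\varepsilon$ still has the form $\alpha(\gamma\gamma')$ with $\gamma\gamma'\neq\varepsilon$. If $\largo{\beta}$ is even then $\largo{\beta} \geq m+2$, so the $\beta$-implication lives in $E_m$ and holds on $A_0 \times A_1$; its antecedent is obtained by combining, for each $\gamma'\neq\varepsilon$, the assumption on $A_0$ (which gives $A_0 \models \tau_{\beta\gamma'}$ since $\gamma\gamma'\neq\varepsilon$) with the induction hypothesis on $A_1$, using direct-product preservation.

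Given the sub-claim, both conclusions follow routinely. For $A_0 \models E_m$ one fixes any even $\alpha$ with $m\leq \largo{\alpha}\leq N$ and assumes $A_0$ satisfies the antecedent $\bigwedge_{\gamma\neq\varepsilon}\tau_{\alpha\gamma}$; the sub-claim then gives the same conjunction on $A_1$, direct-product preservation lifts it to $A_0\times A_1$, the $E_m$-implication on $A_0\times A_1$ yields $\tau_\alpha$ there, and direct-factor preservation drops $\tau_\alpha$ back to $A_0$. For the implication \eqref{eq:32}, with $\largo{\alpha}=m$, the case $\mu\neq\varepsilon$ follows immediately by combining the hypothesis on $A_0$ with the sub-claim on $A_1$ via direct-product preservation, while the case $\mu=\varepsilon$ is precisely the statement $A_0\times A_1\models \tau_\alpha$ established in the previous sentence.

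The main obstacle will not be conceptual but combinatorial: one must track the parity of $\largo{\beta}$ and the admissible ranges $m+1\leq \largo{\beta}\leq N$ (for $O_{m+1}$) and $m+2\leq \largo{\beta}\leq N$ (for the even part of $E_m$) carefully enough to guarantee, at each step of the reverse induction, that the invoked implication is in fact listed in the appropriate conjunction and that the induction hypothesis covers exactly the extensions $\beta\gamma'$ appearing in its antecedent.
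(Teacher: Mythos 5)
Your proof is correct; the one point worth double-checking is the base case of your sub-claim, which works because $N$ is even, so every word of length $N$ falls under $E_m$ with vacuous antecedent. Your route differs from the paper's in its organization rather than in its substance. The paper runs a downward induction on the parameter $m$ itself (in steps of $2$), taking as inductive hypothesis the full statement of the lemma at $m+2$ --- in particular the implication~(\ref{eq:32}) for words of length $m+2$, which it applies to the words $\alpha i j$ in order to climb from the hypothesis on $A_0$ up to the product, then down to $A_1$, then through $O_{m+1}$ and back up. You instead fix $m$ and $\alpha$ once and for all and run a single reverse induction on the length of the extension $\beta=\alpha\gamma$, isolating as a dedicated sub-claim the transfer of each $\tau_\beta$ to $A_1$; the alternation between $E_m$ on the product (even lengths) and $O_{m+1}$ on $A_1$ (odd lengths), mediated by product and factor preservation, is exactly the same engine. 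Your decomposition is arguably cleaner: it is self-contained (no appeal to the lemma at a different parameter value), it handles all even-length $\alpha$ in $[m,N]$ uniformly rather than delegating the longer ones to $E_{m+2}$, and it makes the parity bookkeeping explicit in one place. What the paper's version buys is that the statement being inducted on is literally the lemma, so no auxiliary claim needs to be formulated; what yours buys is a slightly stronger intermediate fact (transfer of every $\tau_\beta$ to $A_1$ for every proper extension $\beta$ of any admissible $\alpha$) and a more transparent account of why the ranges $m+1\leq\largo{\beta}\leq N$ and $m+2\leq\largo{\beta}\leq N$ are the right ones.
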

\begin{proof}
By induction on $m$. If $m=N$, the first part is immediate since $E_N$ is a
conjunction of  formulas  preserved by taking direct factors and hence preserved by taking direct factors. The
second part is contained in the hypothesis.

To make the proof more readable, we will omit the string of
parameters. Take an even $m$ such that $2\leq m < N$ and suppose the lemma is proved for
$m+2$.  Assume 
\begin{align}
 A_0\times A_1 &\models  E_{m}; &&\qquad\text{Note that:} &E_{m} &= E_{m+2} \y  \bigwedge_{\largo{\alpha}= m}
 \; \Bigl(\sideset{}{_{\gamma\neq\varepsilon}}\bigwedge \tau_{\alpha\gamma} \limp
 \tau_\alpha \Bigr)\label{eq:9} \\
 A_1 &\models O_{m+1} &&  &O_{m+1} &= O_{m+3} \y  \bigwedge_{\largo{\alpha}= m+1}
 \; \Bigl(\sideset{}{_{\gamma\neq\varepsilon}}\bigwedge \tau_{\alpha\gamma} \limp
 \tau_\alpha \Bigr).\label{eq:10}
  \end{align}
By the first part of the inductive hypothesis we thus have $A_0\models E_{m+2}$. We have to
 see that $A_0 \models \bigwedge_{\largo{\alpha}= m}
 \; \bigl(\sideset{}{_{\gamma\neq\varepsilon}}\bigwedge \tau_{\alpha\gamma} \limp
 \tau_\alpha \bigr)$. Suppose now that for some $\alpha$ of length $m$,
 \begin{equation}
A_0 \models \sideset{}{_{\gamma\neq\varepsilon}}\bigwedge
 \tau_{\alpha\gamma}.\label{eq:11}
\end{equation}
In particular, for each $i,j\leq N$ we have
\[A_0 \models \sideset{}{_{\gamma\neq\varepsilon}}\bigwedge
 \tau_{\alpha i j \gamma}. \]
We will prove that $A_0 \models \tau_\alpha$ and the second part of
the lemma will be proved along the way.

By the second part of the inductive hypothesis 
(i.e., (\ref{eq:32})) we obtain, for all $j$,
\[A_0 \times A_1 \models \sideset{}{_\mu}\bigwedge
\tau_{\alpha i j \mu} \quad \text{or, in other symbols,} \quad A_0 \times A_1 \models \sideset{}{_{\gamma\neq\varepsilon}}\bigwedge
\tau_{\alpha i \gamma}. \]
Since this last formula is preserved by taking direct factors, we have 
\begin{equation}\label{eq:30}
A_1 \models 
\sideset{}{_{\gamma\neq\varepsilon}}\bigwedge \tau_{\alpha i \gamma}.
\end{equation}
Using~(\ref{eq:10})  (note $\largo{\alpha i} = m+1$), we have 
$ A_1 \models \tau_{\alpha i}$ for all $i$. This, together
with~(\ref{eq:30}) yields $A_1
\models\bigwedge_{\gamma\neq\varepsilon} 
\tau_{\alpha\gamma}$. Now we apply~(\ref{eq:11}), thus 
obtaining
\[A_0 \times A_1 \models \sideset{}{_{\gamma\neq\varepsilon}}\bigwedge
 \tau_{\alpha\gamma}.\]
Applying~(\ref{eq:9}),
\[A_0 \times A_1 \models  \tau_{\alpha}. \]
The last two formulas  jointly say 
\begin{equation}\label{eq:17}
A_0 \times A_1 \models
\Bigl(\bigwedge_{\mu}\tau_{\alpha\mu}\Bigr).
\end{equation}
We have proved (\ref{eq:11})$\ent$(\ref{eq:17}), which is the second
conclusion. Since $\tau_\alpha$ is 
preserved by taking direct factors, we obtain $A_0 \models  \tau_{\alpha},$
which is  the first conclusion. 
\end{proof}

\begin{lemma}\label{l:tech_O} 
Let $m$ be an even integer, $A_0, A_1\in \V$
and $c,d,e,a_1,\dots,a_{2n}\in A_0\times A_1$ such that $2\leq m\leq
N$, $A_0\times A_1 \models  O_{m-1}(c,d,e,a_1,\dots,a_{2n}) $ and 
$A_1 \models E_{m}(c^1,c^1,e^1,a_1^1,\dots,a_{2n}^1)$. Then $ A_0
\models O_{m-1}(c^0,d^0,e^0,a_1^0,\dots,a_{2n}^0)$ and if $\alpha$ has
length $m-1$ then 
\[ A_0 \models \Bigl(\bigwedge_{\gamma\neq\varepsilon}
\tau_{\alpha\gamma}\Bigr)(c^0,d^0,e^0,a_1^0,\dots,a_{2n}^0) \ \ent \ A_0 \times A_1 \models  \Bigl(\bigwedge_{\mu}\tau_{\alpha\mu}\Bigr)(c,d,e,a_1,\dots,a_{2n}).\]
\end{lemma}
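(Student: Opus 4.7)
The plan is to prove Lemma~\ref{l:tech_O} by downward induction on the even parameter $m$, starting from $m=N$, mirroring the proof of Lemma~\ref{l:tech_E} with the roles of $E_\cdot$ and $O_\cdot$ (and the corresponding length shifts) interchanged. The two statements are structurally dual, so each step of the earlier argument should admit a direct counterpart, and the main task is to verify that parities and word lengths line up.

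For the base case $m=N$, only words of length $N-1$ contribute to $O_{N-1}$, and $E_N$ is simply $\bigwedge_{\largo{\beta}=N}\tau_\beta$. Given $\alpha$ of length $N-1$ and the assumption $A_0\models \bigwedge_{\gamma\neq\varepsilon}\tau_{\alpha\gamma}$ (which, since $\largo{\alpha\gamma}\leq N$, involves only $\gamma$ of length $1$), the hypothesis $A_1\models E_N$ supplies the matching conjuncts on $A_1$. Preservation of each $\tau_{\alpha j}$ under direct products yields $A_0\times A_1\models \bigwedge_{\gamma\neq\varepsilon}\tau_{\alpha\gamma}$, then $A_0\times A_1\models O_{N-1}$ gives $A_0\times A_1\models \tau_\alpha$, and preservation under direct factors finally produces $A_0\models \tau_\alpha$. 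Both parts of the conclusion at length $N-1$ drop out simultaneously.

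For the inductive step I fix $m$ even with $2\leq m<N$ and assume the lemma for $m+2$. Decomposing
\[
O_{m-1}\equiv O_{m+1}\y\bigwedge_{\largo{\alpha}=m-1}\Bigl(\sideset{}{_{\gamma\neq\varepsilon}}\bigwedge \tau_{\alpha\gamma}\limp \tau_\alpha\Bigr),\qquad E_m\equiv E_{m+2}\y\bigwedge_{\largo{\alpha}=m}\Bigl(\sideset{}{_{\gamma\neq\varepsilon}}\bigwedge \tau_{\alpha\gamma}\limp \tau_\alpha\Bigr),
\]
the first part of the inductive hypothesis immediately yields $A_0\models O_{m+1}$. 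To deal with the length-$(m-1)$ conjuncts, I fix such an $\alpha$, assume $A_0\models \bigwedge_{\gamma\neq\varepsilon}\tau_{\alpha\gamma}$, and run the following chain: the second part of the IH at $m+2$ (applied to the length-$(m+1)$ word $\alpha i j$ for each $i,j$) lifts $\bigwedge_{\gamma\neq\varepsilon}\tau_{\alpha i\gamma}$ to $A_0\times A_1$; preservation by direct factors then projects this to $A_1$; combined with $A_1\models E_m$ (noting $\largo{\alpha i}=m$ is even) this yields $\tau_{\alpha i}$ in $A_1$ for each $i$, hence $A_1\models \bigwedge_{\gamma\neq\varepsilon}\tau_{\alpha\gamma}$; preservation by direct products lifts this to $A_0\times A_1$; the hypothesis $A_0\times A_1\models O_{m-1}$ (using the parity $m-1$ odd) gives $A_0\times A_1\models \tau_\alpha$; and preservation by direct factors descends to $A_0\models \tau_\alpha$. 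The second conclusion of the lemma is recovered one step before the end.

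The main obstacle is keeping the parities straight, so that at every hop the correct piece of the inductive hypothesis and the correct one of the two model assumptions are invoked. The critical facts are that $\largo{\alpha i}=m$ is even (making $E_m$ applicable on $A_1$), that $\largo{\alpha i j}=m+1$ matches the length addressed by the second part of the IH at $m+2$, and that $\largo{\alpha}=m-1$ is odd (so that the conjuncts being handled really sit inside $O_{m-1}$). Once these bookkeeping checks are in place, the argument is a verbatim dualization of the proof of Lemma~\ref{l:tech_E}.
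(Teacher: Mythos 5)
Your proof is correct and follows essentially the same route as the paper: the identical base case at $m=N$, and an inductive step that the paper itself only sketches as "closely parallels the proof of Lemma~\ref{l:tech_E}", which you have spelled out with the parities and word lengths correctly aligned ($|\alpha ij|=m+1$ for the second part of the IH at $m+2$, $|\alpha i|=m$ for $E_m$ on $A_1$, $|\alpha|=m-1$ for $O_{m-1}$ on the product). The only cosmetic point is that the lemma's hypothesis reads $E_m(c^1,c^1,\dots)$, evidently a typo for $E_m(c^1,d^1,\dots)$, which you have tacitly (and correctly) read in the intended way.
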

\begin{proof}
By induction on $m$. If $m=N$, the hypotheses are:
\begin{align}
 A_0\times A_1 &\models  O_{N-1} && =  \bigwedge_{\largo{\alpha}= N-1}
 \; \Bigl(\sideset{}{_{\gamma\neq\varepsilon}}\bigwedge \tau_{\alpha\gamma} \limp
 \tau_\alpha \Bigr)\label{eq:12} \\
 A_1 &\models E_{N} && = \bigwedge_{\largo{\beta}= N}
 \tau_\beta.\label{eq:13}
\end{align}	

Assume that for some $\alpha$ of length $N-1$,
 \begin{equation*}
A_0 \models \sideset{}{_{\gamma\neq\varepsilon}}\bigwedge
 \tau_{\alpha\gamma} \; = \; \bigwedge_i \tau_{\alpha i},\label{eq:14}
\end{equation*}
By using~(\ref{eq:13}) and preservation by direct products, we have
\[A_0 \times A_1 \models  \bigwedge \tau_{\alpha i}. \]
By~(\ref{eq:12}) we have $A_0 \times A_1 \models  \tau_{\alpha}$, thus
obtaining
\[A_0 \times A_1 \models  \bigwedge_\mu \tau_{\alpha \mu}. \]
We have proved the second part of the lemma. Passing to
$A_0$ we obtain the first part.

Now take an even $m$ such that $2\leq m < N$ and suppose the lemma is proved for
$m+2$.  Assume 
\begin{align*}
 A_0\times A_1 &\models  O_{m-1} && =  O_{m+1} \ \y  \bigwedge_{\largo{\alpha}= m-1}
 \; \Bigl(\sideset{}{_{\gamma\neq\varepsilon}}\bigwedge \tau_{\alpha\gamma} \limp
 \tau_\alpha \Bigr)\\ 
 A_1 &\models E_{m} && = E_{m+2} \ \y  \bigwedge_{\largo{\alpha}= m}
 \; \Bigl(\sideset{}{_{\gamma\neq\varepsilon}}\bigwedge \tau_{\alpha\gamma} \limp
 \tau_\alpha \Bigr).
  \end{align*}
By inductive hypothesis we thus have $A_0\models O_{m+1}$. The rest
 of the argument closely parallels the proof of Lemma~\ref{l:tech_E}.
\end{proof}
\begin{proof}[Proof of Lemma~\ref{l:preserv_dfc}]
To see (\ref{item:baja_O}), 
 suppose  
\[A_0\times A_1 \models \Bigl(\bigl(\exists y_1\forall x_1 
\dots \exists y_n\forall x_n \; E_m\bigr) \y \bigl(\exists x_1 \forall y_1 \dots
\exists x_n \forall y_n\; O_{m-1}\bigr)\Bigr)(c,d,e).\]
Thus we have
(Skolem) functions $G_1,\dots,G_n$ such that $G_i $ is $(i-1)$-ary and
\begin{align*}
A_0\times A_1 &\models \forall \vec y O_{m-1} (c,d,e,G_1,y_1,\dots,G_n(y_1,\dots,y_{n-1}),y_n).
\end{align*}
Since $O_{m-1}$ implies $O_{m+1}$, we have
\[A_0\times A_1 \models \Bigl(\bigl(\exists y_1\forall x_1 
\dots \exists y_n\forall x_n \; E_m\bigr) \y \bigl(\exists x_1 \forall y_1 \dots
\exists x_n \forall y_n\; O_{m+1}\bigr)\Bigr)(c,d,e).\]
And then, since this formula is preserved by taking direct factors, by hypothesis,
\begin{align}
  A_0&\models \bigl(\exists y_1\forall x_1 
    \dots \exists y_n\forall x_n \;
    E_m\bigr)(c^0,d^0,e^0)\label{eq:35} \\
  A_1 &\models \bigl(\exists y_1\forall x_1 
    \dots \exists y_n\forall x_n \; E_m\bigr)(c^1,d^1,e^1).
\end{align}
Thus we have functions $F_1,\dots,F_n$ such that
\begin{equation*}
A_1 \models \forall \vec x E_m
(c^1,d^1,e^1,x_1,F_1,\dots,x_n,F_n(x_1,\dots,x_{n-1})).
\end{equation*}
Now, for $j=1,\dots,n$  define $j$-ary functions $p_j$ from $A_0$
to $A_0\times A_1$, $p_j = p_j(a_1,\dots,a_{j})$: 
\begin{align*}
 p_1&:= \< a_1, F_1\> \\
 p_2 &:= \<a_2, F_2( G_1^1)\> \\
 p_j &:=  \<a_j, F_j\bigl( G_1^1, G_2(p_1)^1,\dots,
 G_{j-1}(p_1,\dots,p_{j-2})^1\bigr) \>.
\end{align*}
The reader may check that this selection ensures, for each $\vec a \in
A_0^n$:
\begin{align*}
A_0\times A_1 &\models O_{m-1}
\bigl(c,d,e,G_1,p_1,\dots,G_n(p_1,\dots,p_{n-1}),p_n\bigr) \\ 
A_1 &\models E_m
\bigl(c^1,d^1,e^1,G_1^1,p_1^1,\dots,G_n(p_1,\dots,p_{n-1})^1,
p_n^1\bigr).
\end{align*}
We may apply Lemma~\ref{l:tech_O} and obtain
\begin{equation*}
A_0 \models O_{m-1}
\bigl(c^0,d^0,e^0,G_1^0,p_1^0,\dots,G_n(p_1,\dots,p_{n-1})^0,p_n^0\bigr).
\end{equation*}
Equivalently,
\begin{equation*}
A_0 \models O_{m-1}
\bigl(c^0,d^0,e^0,G_1^0,a_1,G_2(p_1)^0,a_2,\dots,G_n(p_1,\dots,p_{n-1})^0,a_n\bigr).
\end{equation*}
Now, defining $H_j:A_0^{j-1} \func A_0$ as follows:
\begin{align*}
H_1 &:=  G_1^0 \\
H_2(y_1) &:= G_2(p_1(y_1))^0 \\
H_j(y_1,\dots,y_{j-1}) &:= G_j(p_1(y_1),\dots,p_{j-1}(y_1,\dots,y_{j-1}))^0,
\end{align*}
we see at once that
\begin{equation*}
A_0 \models\forall \vec y O_{m-1}
\bigl(c^0,d^0,e^0,H_1,y_1,\dots,H_n(y_1,\dots,y_{n-1}),y_n\bigr),
\end{equation*}
and then
\[A_0  \models  \bigl(\exists x_1 \forall y_1 \dots
\exists x_n \forall y_n\; O_{m-1}\bigr)(c^0,d^0,e^0).\]
This, together with~(\ref{eq:35}), proves this case.
\smallskip

Part (\ref{item:baja_E}),  is entirely analogous to the former, and it's proved by
using Lemma~\ref{l:tech_E}.
\end{proof}

The  hypotheses of the next theorem are the same as in
Lemma~\ref{l:preserv_dfc}, and we repeat then for the ease of reference.

\begin{theorem}\label{t:preserv_ker}
For  every word  $\alpha$ in the alphabet $\{1,\dots,N\}$ of
length no greater than $N$, let $\tau_\alpha = \tau_\alpha (x,y,\vec z,x_1,y_1,\dots,x_n,y_n)$ be
a formula preserved by taking direct products and  direct factors. Define:
\[
E_{m}:=\bigwedge_{\begin{subarray}{c}
m\leq\largo{\alpha}\leq N  \\ \largo{\alpha}\text{ even }\end{subarray}}
\ \Bigl(\sideset{}{_{\gamma\neq\varepsilon}}\bigwedge \tau_{\alpha\gamma} \limp
\tau_\alpha \Bigr)\qquad  O_{m}:= \bigwedge_{\begin{subarray}{c}
m\leq\largo{\alpha}\leq N  \\ \largo{\alpha}\text{ odd }\end{subarray}}
\ \Bigl(\sideset{}{_{\gamma\neq\varepsilon}}\bigwedge \tau_{\alpha\gamma} \limp
\tau_\alpha \Bigr).\]
 Then the formula 
\begin{equation}\label{eq:31}
\bigl( \exists y_1 \forall x_1
\dots \exists y_n \forall x_n \; E_2\bigr) \y \bigl(\exists x_1 \forall y_1 \dots
\exists x_n \forall y_n\; O_1\bigr)
\end{equation}
is  preserved by taking direct factors and direct products.
\end{theorem}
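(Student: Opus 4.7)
My plan is to treat preservation by direct products and preservation by direct factors separately, since Lemma~\ref{l:preserv_dfc} addresses only the latter.

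For preservation by direct products, I would argue directly via componentwise Skolem functions. Assume each $A_i$ satisfies the formula~(\ref{eq:31}) at given parameters. Each $A_i$ supplies Skolem functions for the existentially quantified variables in the two conjuncts; glue them componentwise into Skolem functions on $\prod_i A_i$. For any tuple of universal witnesses in the product, each conjunct of the matrix has the form $\bigwedge_{\gamma\neq\varepsilon}\tau_{\alpha\gamma}\limp\tau_\alpha$. If the antecedent holds in $\prod_i A_i$, then preservation of each $\tau_{\alpha\gamma}$ by direct factors yields it on every projected tuple in $A_i$; since the Skolem values in $A_i$ witness the whole matrix, the implication in $A_i$ forces $\tau_\alpha$ in each $A_i$; preservation of $\tau_\alpha$ by direct products then lifts $\tau_\alpha$ back to $\prod_i A_i$.

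For preservation by direct factors, I would identify a base case and descend to $(E_2, O_1)$ by alternating applications of Lemma~\ref{l:preserv_dfc}. The natural base is $(E_N, O_{N+1})$: here $O_{N+1}$ is the empty conjunction (no word $\alpha$ satisfies $N+1 \leq \largo{\alpha} \leq N$), so the second conjunct is trivially true and preserved. The first conjunct is $\exists y_1 \forall x_1 \cdots \exists y_n \forall x_n E_N$ with $E_N = \bigwedge_{\largo{\beta} = N}\tau_\beta$. To establish its preservation by direct factors, take Skolem functions witnessing the formula in $A_0 \times A_1$, fix once and for all a tuple in $A_1$, lift tuples from $A_0$ into $A_0 \times A_1$ via that fixed tuple, evaluate the Skolem functions, and project back to $A_0$; preservation of each $\tau_\beta$ by direct factors then lets the matrix descend. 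Since preservation by direct factors is closed under conjunction, this settles the base case.

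From this base, I iterate Lemma~\ref{l:preserv_dfc} as follows: part~(\ref{item:baja_O}) with $m = N$ yields $(E_N, O_{N-1})$; part~(\ref{item:baja_E}) with $m = N$ yields $(E_{N-2}, O_{N-1})$; next, part~(\ref{item:baja_O}) with $m = N-2$ yields $(E_{N-2}, O_{N-3})$, and so on, alternating down to $(E_4, O_3)$, then $(E_2, O_3)$ via part~(\ref{item:baja_E}) with $m = 4$, and finally $(E_2, O_1)$ via part~(\ref{item:baja_O}) with $m = 2$, which is the desired formula. The main obstacle I expect lies in the base case: verifying that the lifting-and-projection construction faithfully realizes the alternating quantifier prefix in the factor, i.e.\ that the resulting Skolem functions in $A_0$ witness the formula at \emph{every} tuple of $A_0^n$ and not merely at lifted ones. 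Once that is in place, the descent reduces to indexing bookkeeping already encapsulated by Lemma~\ref{l:preserv_dfc}.
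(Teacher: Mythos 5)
Your proposal is correct and follows essentially the same route as the paper: the same base case $(E_N,O_{N+1})$, the same alternating descent via parts~(\ref{item:baja_O}) and~(\ref{item:baja_E}) of Lemma~\ref{l:preserv_dfc} down to $(E_2,O_1)$, and a direct componentwise-Skolem argument for products (which the paper dismisses as a ``straightforward calculation''). The base-case concern you flag is handled exactly by the lifting you describe, since every tuple of $A_0^n$ arises as the projection of a lifted tuple, so nothing further is needed.
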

\begin{proof}
First observe that
\[\bigl( \exists y_1 \forall x_1
\dots \exists y_n \forall x_n \; E_N\bigr) \y \bigl(\exists x_1 \forall y_1 \dots
\exists x_n \forall y_n\; O_{N+1}\bigr) = \exists y_1 \forall x_1
\dots \exists y_n \forall x_n \;  \bigwedge_{\largo{\beta}=N} \tau_\beta \]
is preserved by direct factors. This is immediate since
conjunction and quantification of  formulas preserved by taking direct factors is
again preserved by taking direct factors. Successive application of
Lemma~\ref{l:preserv_dfc} yields that~(\ref{eq:31}) is preserved by
taking direct factors.

The proof that~(\ref{eq:31}) is preserved by direct products is a
straightforward calculation.
\end{proof}

\bigskip
\bigskip

\begin{quote}
CIEM --- Facultad de Matem\'atica, Astronom\'{\i}a y F\'{\i}sica 
(Fa.M.A.F.) 

Universidad Nacional de C\'ordoba - Ciudad Universitaria

C\'ordoba 5000. Argentina.

\texttt{sterraf@mate.uncor.edu}

\texttt{vaggione@mate.uncor.edu}
\end{quote}
\end{document}